\newtheorem{theorem}{Theorem}
\newtheorem{lemma}{Lemma}
\newtheorem{corollary}{Corollary}
\newtheorem{prop}{Proposition}
\theoremstyle{definition}
\newtheorem{definition}{Definition}
\theoremstyle{remark}
\newtheorem{remark}{Remark}
\numberwithin{equation}{section}
\newcommand{\rcomp}{C_{c}^{\infty}(\mathbb{R}^n)}
\newcommand{\dom}{\operatorname{Dom}}
\newcommand{\supp}{\operatorname{supp}}
\newcommand\RR{\mathbb{R}}
\newcommand\NN{\mathbb{N}}
\newcommand\p{\partial}
\newcommand\fpi{F\Psi^{-\infty}}
\newcommand\fprl{F\Psi}
\newcommand\uprl{U\Psi}
\newcommand\ueprl{UE\Psi}
\newcommand{\xcomp}{C_{c}^{\infty}(X)}
\begin{document}

\title[Extended Sobolev scale]{Extended Sobolev Scale on Non-Compact Manifolds}
\author{Ognjen Milatovic}
\address{Department of Mathematics and Statistics\\
         University of North Florida   \\
       Jacksonville, FL 32224 \\
        USA
           }
\email{omilatov@unf.edu}

\subjclass[2010]{35S05, 46B70, 46E35, 58J40}

\keywords{Extended Sobolev scale, Interpolation space, Manifold of bounded geometry, Proper uniform pseudo-differential operator, $RO$-varying function}

\begin{abstract}
Adapting the definition of ``extended Sobolev scale" on compact manifolds by Mikhailets and Murach to the setting of a (generally non-compact) manifold of bounded geometry $X$, we define the ``extended Sobolev scale" $H^{\varphi}(X)$, where $\varphi$ is a function which is $RO$-varying at infinity. With the help of the scale $H^{\varphi}(X)$, we obtain a description of all Hilbert function-spaces that serve as interpolation spaces with respect to a pair of Sobolev spaces $[H^{(s_0)}(X), H^{(s_1)}(X)]$, with $s_0<s_1$. We use this interpolation property to establish a mapping property of proper uniform pseudo-differential operators (PUPDOs) in the context of the scale $H^{\varphi}(X)$. Additionally, using a first-order positive-definite PUPDO $A$ of elliptic type we define the ``extended $A$-scale" $H^{\varphi}_{A}(X)$ and show that it coincides, up to norm equivalence, with the scale $H^{\varphi}(X)$. Besides the mentioned results, we show that further properties of the $H^{\varphi}$-scale, originally established by Mikhailets and Murach on $\RR^n$ and on compact manifolds, carry over to manifolds of bounded geometry.
\end{abstract}

\maketitle

\section{Introduction}\label{S:S-intro}
Ever since their debut in the 1930s, Sobolev spaces have played an important role in analysis and PDE. In particular, interpolation qualities of the Sobolev scale make it possible to extend various properties of integer-order Soblev spaces to real-order Sobolev spaces. However, as pointed out in~\cite{MM-14}, the usual Sobolev scale $\{H^{(s)}(\RR^n)\colon s\in \RR\}$ is not sufficiently finely calibrated to tackle some mathematical problems. To overcome this difficulty, in their seminal papers~\cite{MM-09,MM-13} (see also Section 2.4 of their monograph~\cite{MM-14}), Mikhailets and Murach proposed the so-called extended Sobolev scale $H^{\varphi}(\RR^n)$, defined similar to $H^{(s)}(\RR^n)$ but with $\varphi(\langle\xi\rangle)$ in place of $\langle\xi\rangle^{s}$, where $\varphi$ is a function $RO$-varying at $\infty$ and satisfies some additional properties; see Section~\ref{SS-1-8} below for precise description of the function class $RO$. It is worth mentioning that the scale $H^{\varphi}(\RR^n)$ is a generalization of the so-called refined Sobolev scale, introduced by Mikhailets and Murach earlier in~\cite{MM-07} (see also Section 1.3 in~\cite{MM-14}). What makes the scale $H^{\varphi}(\RR^n)$ particularly interesting is the following interpolation property (established in~\cite{MM-13}; see also~\cite{MM-15} for bounded domains $\Omega\subset\RR^n$ with Lipschitz boundary): a Hilbert space $\mathscr{S}$ is an interpolation space with respect to a pair (see Section~\ref{SS-2-1} below for this concept) of the form
\begin{equation*}
[H^{(s_0)}(\RR^n),H^{(s_1)}(\RR^n)], \qquad -\infty<s_0<s_1<\infty,
\end{equation*}
if and only if $\mathscr{S}=H^{\varphi}(\RR^n)$, for some $\varphi\in RO$.

Owing to the mentioned interpolation property (and other useful attributes studied in~\cite{MM-07,MM-13, MM-15}), refined (and extended) Sobolev spaces (as isotropic cases  of the so-called H\"ormander spaces), provide a convenient setting for building the theory of elliptic boundary-value problems on $\RR^n$ and for obtaining various results from spectral theory of differential operators on $\RR^n$ in analogy to those holding in the realm of the usual Sobolev spaces.  As a testimony of fruitful activity in this direction during the last fifteen years, besides the papers mentioned so far, we refer the reader to the monograph~\cite{MM-14}, the papers~\cite{DMM-21,AK-16,MM-21,MZ-24}, the survey~\cite{MMC-25}, and numerous references therein. For theory of parabolic boundary-value problems on $\RR^n$ in the so-called anisotropic generalized Sobolev spaces, see the paper~\cite{LMM-21-paper}, the monograph~\cite{LMM-21}, and references therein.

In addition to the properties of the $H^{\varphi}$-scale on $\RR^n$, Mikhailets, Murach, and their collaborators have studied $H^{\varphi}$-spaces on closed manifolds (that is, compact manifolds without boundary) as well as vector bundles over closed manifolds; see the papers~\cite{MM-09,MZ-24,Z-17} and the monograph~\cite{MM-14}. (The paper~\cite{Z-17} considers the ``refined Sobolev scale" on vector bundles over closed manifolds, a special case of the $H^{\varphi}$-scale.) In particular, an analogue of the aforementioned interpolation property (and its counterparts), along with the mapping property with respect to pseudo-differential operators with symbols in the H\"ormander-type class on closed manifolds, were established in Section 2.4 of~\cite{MM-14}. More recently, $H^{\varphi}$-scale on compact manifolds with boundary was investigated in~\cite{Kas-19}.

In the setting of a (not necessarily compact) manifold of bounded geometry $X$ (see Section~\ref{SS-1-3} for the meaning of ``bounded geometry"), the author of~\cite{T-86} studied Besov spaces and Triebel--Lizorkin spaces (which include the usual Sobolev spaces $H^{(s)}(X)$, $s\in\RR$). Modifying the definition of the Sobolev scale $H^{(s)}(X)$, where $X$ is a manifold of bounded geometry, in Section~\ref{SS-1-10} we define the (extended) Sobolev scale $H^{\varphi}(X)$, with $\varphi\in RO$. Later, in Theorem~\ref{T:ind-triv}, we show that the definition of $H^{\varphi}(X)$ is independent (up to norm equivalence) of the geodesic trivialization.

Important examples of manifolds of bounded geometry are Lie groups (or more general homogeneous manifolds with invariant metrics) and covering manifolds of compact manifolds (with a Riemannian metric lifted from the base manifold). The main goal of this article is to show that the aforementioned interpolation property (and its variants) hold in the setting of (extended) Sobolev spaces on manifolds of bounded geometry; see Theorem~\ref{T:main-1} and Theorem~\ref{T:main-2} below. We do this via the key Proposition~\ref{L-5}, where the bounded geometry features discussed in~\cite{G-13,Kor-91,T-86} allow us to apply a suitable localization technique and use the corresponding interpolation properties for $H^{\varphi}(\RR^n)$.

Furthermore, in analogy with a ``quadratic interpolation" result of~\cite{MM-15} for bounded domains $\Omega\subset\RR^n$ with Lipschitz boundary, we show that (see Theorem~\ref{T:main-3} below) the class $H^{\varphi}(X)$ is closed under interpolation with a function parameter. In Theorem~\ref{T:main-4} and Theorem~\ref{T:main-6} we establish further properties of the scale $H^{\varphi}(X)$, including the density of $\xcomp$ (smooth compactly supported functions) in $H^{\varphi}(X)$ and an embedding result $H^{\varphi}(X)\hookrightarrow C^{k}_{b}(X)$, where $C^{k}_{b}(X)$ stands for $C^k$-bounded functions on $X$.

According to the theory of the proper uniform pseudo-differential operators (PUPDOs) on manifolds of bounded geometry (see Section~\ref{SS-1-18} below for details), developed in~\cite{MS-1} for unimodular Lie groups and subsequently generalized in~\cite{Kor-91} to manifolds of bounded geometry, a PUPDO $A$ of order $m\in\RR$ on a manifold of bounded geometry $X$ extends to a bounded linear operator $A\colon H^{(s)}(X)\to H^{(s-m)}(X)$. With this in mind, we use the interpolation result of Theorem~\ref{T:main-2} to prove the corresponding mapping property for PUPDOs in the scale $H^{\varphi}(X)$, $\varphi\in RO$; see Theorem~\ref{T:main-5} below. For mapping properties of pseudo-differential operators in the context of Besov spaces and Triebel--Lizorkin spaces on manifolds of bounded geometry, see~\cite{Sk-98}. For further development of the uniform localization technique and its applications to the spectral theory of elliptic operators, see~\cite{T-99}.

As in the corresponding definition in~\cite{MM-21} for closed manifolds, working in the setting of  a  manifold of bounded geometry $X$,  for $\varphi\in RO$ and for a first-order PUPDO $A$ of elliptic type (see Definition~\ref{D-6-x} below) satisfying $(Au,u)\geq \|u\|^2$ for all $u\in\xcomp$, we define the so-called extended Hilbert $A$-scale $H_{A}^{\varphi}(X)$. With this background, in Theorem~\ref{T:main-7} we show that, up to norm equivalence, we have $H_{A}^{\varphi}(X)=H^{\varphi}(X)$.

Lastly, we remark that, similar to the extended Sobolev scale in the context of vector bundles over closed manifolds in~\cite{Z-17},  all results of our article hold for the extended Sobolev scale $H^{\varphi}(E)$, $\varphi\in RO$, where $E$  is a Hermitian vector bundle of bounded geometry (over a manifold of bounded geometry). (The scale $H^{\varphi}(E)$, $\varphi\in RO$, can be defined by the modifying the corresponding definition of the scale $H^{(s)}(E)$, $s\in\RR$, from~\cite{G-13}.) In particular, the mapping property (Theorem~\ref{T:main-5}) can be formulated for PUPDOs on manifolds of bounded geometry acting on sections of Hermitian vector bundles of bounded geometry.


The article consists of twelve sections. In Section~\ref{S:S-1} we summarize the basic notations, describe relevant geometric concepts (bounded geometry, geodesic coordinates, geodesic trivialization, and a special partition of unity). Furthermore, Section~\ref{S:S-1} contains the definitions of the usual Sobolev scale, $RO$-varying functions, the extended Sobolev scale, PUPDOs on manifolds of bounded geometry, and basic concepts from interpolation theory of Hilbert spaces. Additionally, in Section~\ref{S:S-1} (more specifically subsections~\ref{SS-1-12}--\ref{SS-1-15}, \ref{SS-1-19}--\ref{SS-1-20}, and \ref{SS-1-22}) we state the main results (eight theorems and one corollary) of the article. Section~\ref{S:S-2} contains the statements of preliminary facts from interpolation theory, including basic properties of the scale $H^{\varphi}(\RR^n)$. Sections~\ref{S:S-3}--\ref{S:S-10} contain the proofs of the main results.

\section{Geometric Setting and Main Results}\label{S:S-1}
The goal of this section is to recall the concept of geodesic coordinates on a Riemannian manifold, describe a manifold of bounded geometry and the corresponding geodesic trivialization coming from a special type of partition of unity. For more details on these concepts, we refer the reader to the papers~\cite{G-13, Kor-91,Sh-92, T-86}.

\subsection{Partial Derivative Notations}
By an $n$-dimensional multiindex $\alpha$ we mean an element of $\NN_0^{n}$, where $\NN_0:=\{0,1,2,\dots\}$. For a multi-index $\alpha=(\alpha_1, \alpha_2, \dots, \alpha_n)$ and $x\in \RR^n$, we define $|\alpha|:=\alpha_1+ \alpha_2+ \dots+ \alpha_n$, $\alpha!:=\alpha_1\alpha_2\dots\alpha_n$, and $x^{\alpha}:=x_1^{\alpha_1}x_2^{\alpha_2}\dots x_n^{\alpha_n}$. For a complex-valued function $u(x_1,x_2,\dots, x_n)$ and $\alpha\in\NN_0^n$, the partial derivative notations $\p^{\alpha}u$ and $D^{\alpha}$ have the following meaning:
\begin{equation}\label{E:pdv-not}
  \p^{\alpha}u:=\frac{\p^{|\alpha|}u}{\p {x_1}^{\alpha_1}\p {x_2}^{\alpha_2}\dots \p {x_n}^{\alpha_n}}, \quad  D^{\alpha}u:=(-i)^{|\alpha|}\p^{\alpha}u,
\end{equation}
where $i$ is the imaginary unit. When talking about functions $a(x,\xi)$, where $x\in\RR^n$ and $\xi\in\RR^n$, the expression $\p_{\xi}^{\alpha}\p_{x}^{\beta}a(x,\xi)$ denotes the partial derivatives with respect to $\xi$ and $x$ of orders $|\alpha|$ and $|\beta|$ respectively.

For $\xi=(\xi_1,\xi_2,\dots, \xi_n)\in\RR^n$ we set
\begin{equation}\label{E:jb-1}
\langle\xi\rangle:=(1+\xi^2_1+\xi^2_2+\dots+ \xi^2_n)^{1/2}.
\end{equation}

\subsection{Some Geometric Terminology and Notations}\label{SS-1-1}
In this article, $X$ is an $n$-dimensional connected Riemannian manifold without boundary equipped with a metric $g$.
The symbol $\mu_{g}$ stands for the volume element of $X$ corresponding to $g$. Denoting by $d_{g}(\cdot,\cdot)$ the distance function with respect to $g$, for $x_0\in X$ and $r>0$ we define
\begin{equation*}
  B(x_0,r):=\{x\in X\colon d_{g}(x,x_0)<r\}.
\end{equation*}
The symbols $T_{x}X$ and $T^*_{x}X$ denote the tangent and cotangent space at $x\in X$ respectively. The notation $O(X)$ indicates the orthonormal frame  bundle of $X$ with the projection $\pi\colon O(X)\to X$. Here, the wording ``frame $e\in O(X)$ with $\pi(e)=x$" means that $e$ is an isometric isomorphism $e\colon \RR^n\to T_{x}X$.

\subsection{Geodesic Coordinates}\label{SS-1-2} Let $K_{\tau}$ be an open ball with radius $\tau>0$ centered at the origin of $T_{x}X$. It is known that there exists $r>0$ such that the exponential map $\textrm{exp}_{x}\colon K_{r}\to B(x,r)$ is a diffeomorphism. By \emph{injectivity radius of $X$ at $x$} we mean the supremum $r_{x}$ of such numbers $r>0$. Moreover, by \emph{injectivity radius of $X$} we mean $r_{inj}:=\inf_{x\in X}r_{x}$. Fixing $x\in X$ and denoting by $\widehat{K}_{r}$ the open ball of radius $r<r_{x}$ centered at $0\in\RR^n$, a frame $e\in O(X)$ with $\pi(e)=x$ leads to a diffeomorphism $\gamma_{e}\colon \widehat{K}_{r}\to B(x,r)$ by means of the following composition:
\begin{equation}\label{E:geo-def-1}
\gamma_{e}=\textrm{exp}_{x}\circ e
\end{equation}
The mapping $\gamma_{e}$ in~(\ref{E:geo-def-1}) is called \emph{a geodesic coordinate system}. (Some authors use the term \emph{normal coordinate system}.)
Additionally, we have a diffeomorphism
\begin{equation}\label{E:geo-def-kappa}
\kappa_{e}:=\gamma_{e}^{-1}\colon B(x,r)\to \widehat{K}_{r}.
\end{equation}
Let $0<\varepsilon<r_{inj}$ and let $\{x_j\}_{j=1}^{\infty}$ be a set of points such that $\{V_{j}:=B(x_{j},\varepsilon)\}_{j=1}^{\infty}$ covers $X$. Furthermore, let $\{\gamma_{e_j}\}_{j=1}^{\infty}$ be as in~(\ref{E:geo-def-1}) with $x=x_j$. Then, as indicated in example 2.3 of~\cite{G-13}, the collection $(V_{j},\gamma_{e_j})_{j=1}^{\infty}$ is an atlas on $X$--often called (in literature) a \emph{geodesic atlas}.

\subsection{Manifold of Bounded Geometry}\label{SS-1-3} For the remainder of this section we will work in the setting of a \emph{manifold bounded geometry}, by which we mean a Riemannian manifold satisfying the following two conditions:
\begin{enumerate}
  \item [(i)] $r_{inj}>0$;
  \item [(ii)] all covariant derivatives of the Riemann curvature tensor are bounded on $X$.
\end{enumerate}

There is an equivalent characterization of ``bounded geometry" (see Proposition 1.2 in~\cite{Kor-91}):

\medskip

We say that a manifold $X$ has \emph{bounded geometry} if there exists a ball $B\subset \RR^n$ centered at $0$ such that
\begin{enumerate}
  \item [(i)] for each frame $e\in O(X)$, the corresponding geodesic coordinate system $\gamma_{e}$ is defined on  $B$;
  \item [(ii)] in the system $\gamma_{e}\colon B\to X$, the Christoffel symbols of the Levi--Civita connection have derivatives of all orders, and the corresponding derivatives are bounded on $B$ uniformly with respect to the frames $e\in O(X)$.
\end{enumerate}

\begin{remark}\label{R:b-geo} Here we record an important observation: on a manifold of bounded geometry $X$, a frame $e\in O(X)$ gives rise to a coordinate system $\gamma_{e}\colon B\to X$, whereby the same ball $B\subset \RR^n$ is used for all frames $e$. This system $\gamma_{e}$ is referred to as \emph{the standard coordinate system} corresponding to $e\in O(X)$. In what follows, we denote by $\tau_0$ the radius of the ball $B$. In particular, if $e\in O(X)$ is a frame such that $\pi(e)=x$, we have a diffeomorphism $\gamma_{e}\colon  B\to B(x,\tau_0)$.
\end{remark}

We give another equivalent characterization of ``bounded geometry" (see Proposition 1.3 in~\cite{Kor-91} or Remark 3.12(ii) of~\cite{G-13}):

\medskip

Keeping in mind the notations of Section~\ref{SS-1-2} above, we say that a manifold $X$ has \emph{bounded geometry} if the following two conditions are satisfied:
\begin{enumerate}
  \item [(i)] $r_{inj}>0$;
  \item [(ii)] Let $(V_{j},\gamma_{e_j})_{j=1}^{\infty}$ be an arbitrary geodesic atlas of $X$. Then, for all $k\in\NN_{0}$ there exists a constant $C_k$, such that for all $i,j\in\NN$ with $V_i\cap V_j\neq\emptyset$ we have
  \begin{equation}\label{E:geo-atlas-i-j}
  |\p^{\alpha}[((\gamma_{e_i})^{-1}\circ \gamma_{e_j})(x)]|\leq C_{k},
  \end{equation}
for all $x\in \widehat{K}_{\varepsilon}$ and all $\alpha\in{\NN}_{0}^{n}$ such that $|\alpha|\leq k$. (Here, the notation $\p^{\alpha}$ is as in~(\ref{E:pdv-not}) and $\widehat{K}_{\varepsilon}$ is as in Section~\ref{SS-1-2}.)
\end{enumerate}
\begin{remark}\label{R:geo-atlas-i-j} We stress, for future reference, that the constant $C_{k}$ in~(\ref{E:geo-atlas-i-j}) does not depend on the indices $i$ and $j$ of the functions $\gamma_{e_i}$ and $\gamma_{e_j}$.
\end{remark}

\subsection{Special Covering, Partition of Unity}\label{SS-1-4} As shown by the author of~\cite{Kor-91} (see Lemma 2.3 and Lemma 2.4 there), a manifold of bounded geometry has the following properties:

\begin{itemize}
  \item [(C1)] For all $0<\varepsilon<r_{inj}$ there exists a geodesic atlas $(V_j=B(x_j,\varepsilon), \gamma_{e_j})_{j=1}^{\infty}$, with $\gamma_{e_j}$ as in~(\ref{E:geo-def-1}), such that the covering $\mathscr{C}:=\{B(x_j,\varepsilon)\}_{j=1}^{\infty}$ of $X$ has a \emph{finite order} $N_0\in\NN$. (The latter term means that each set $B(x_j,\varepsilon)$ from the cover $\mathscr{C}$ is intersected by at most $N_0$ members of $\mathscr{C}$.)

  \item [(C2)] In the notations of (C1), there exists a smooth partition of unity $\{h_j\}_{j=1}^{\infty}$ on $X$, with $\sum_{j=1}^{\infty}h_j=1$, such that
\begin{enumerate}
  \item [(i)] the collection $\{h_j\}_{j=1}^{\infty}$ is subordinate to the covering $\{B(x_j,\varepsilon)\}_{j=1}^{\infty}$ from (C1), that is $\supp h_{j}\subset V_j$;
  \item [(ii)] for all $k\in\NN_0^{n}$ there exists a constant $C_{k}>0$ such that
  \begin{equation}\label{E:geodesic-atlas-h}
   |\p^{\alpha}[(h_j\circ\gamma_{e_j})(x)]|\leq C_{k},
  \end{equation}
for all $x\in \widehat{K}_{\varepsilon}$, all $\alpha\in\NN_{0}^{n}$ with $|\alpha|\leq k$,  and all $j\in\NN$. (Here, the notation $\p^{\alpha}$ is as in~(\ref{E:pdv-not}) and $\widehat{K}_{\varepsilon}$ is as in Section~\ref{SS-1-2}.)
\end{enumerate}

\end{itemize}
\begin{remark}\label{R:geo-atlas-h} We stress, for future reference, that the constant $C_{k}$ in~(\ref{E:geodesic-atlas-h}) does not depend on the the index $j$ corresponding to $h_j$ and $e_j$.
\end{remark}

\subsection{Geodesic Trivialization}\label{SS-1-5} Keeping in mind the notations of Section~\ref{SS-1-4}, the triplet $(V_j, \gamma_{e_j}, h_j)_{j=1}^{\infty}$ (or the triplet $(V_j, \kappa_{e_j}, h_j)_{j=1}^{\infty}$ with $\kappa_{e_j}$ as in~(\ref{E:geo-def-kappa})) is called a \emph{geodesic trivialization}.

\subsection{Function Spaces}\label{SS-1-6}
The symbol $C^{\infty}(X)$ denotes the space of smooth complex-valued functions on $X$, the notation $\xcomp$ stands for compactly supported elements of $C^{\infty}(X)$, and  $\mathcal{D}'(X)$ indicates the space of distributions on $X$. (Here, the word ``distributions" is meant as continuous linear functionals on $\xcomp$.)  For $k=0,1,\dots,\infty$, the space $C^{k}_{b}(X)$ of $C^{k}$-bounded functions on $X$ consists of $u\in C^{k}(X)$ such that the family $\{u\circ \gamma_{e}\colon e\in O(X)\}$ is bounded in the (Fr\'echet) space $C^{k}(\overline{B})$, where $\gamma_{e}\colon B\to X$ is as in Remark~\ref{R:b-geo}. The space of complex-valued square integrable functions on $X$ will be denoted by $L^2(X)$. We will use $\|\cdot\|$ to indicate the norm in $L^2(X)$ corresponding to the inner product
\begin{equation}\label{E:inner-l-2-x}
  (u,v):=\int_{X} u(x)\overline{v(x)}\,d\mu_{g}(x).
\end{equation}

In the case $X=\RR^n$, the function spaces mentioned so far will be denoted by $C^{\infty}(\RR^n)$, $\rcomp$, $C^{k}_{b}(\RR^n)$, $\mathcal{D}'(\RR^n)$ and $L^2(\RR^n)$. Furthermore, $S(\RR^n)$ and $S'(\RR^n)$ denote the Schwartz space (that is, the space of rapidly decreasing functions) and the space of tempered distributions respectively.  The notation $\widehat{u}$ or $\mathcal{F}u$ indicates the Fourier transform of a function $u\in S(\RR^n)$:
\begin{equation}\label{E:FT}
\widehat{u}(\xi):=\int_{{\RR}^{n}} e^{-iy\cdot\xi}u(y)\,dy.
\end{equation}
It is well known that $\mathcal{F}\colon S(\RR^n)\to  S(\RR^n)$ is a topological linear isomorphism. The Fourier transform of a distribution $u\in S'(\RR^n)$ will also be denoted by $\widehat{u}$ or $\mathcal{F}u$.

Having listed these notations, we are ready to recall the definition of the ($L^2$-type) Sobolev scale on $\RR^n$.

\subsection{Sobolev Scale on $\RR^n$}\label{SS-1-7} For $s\in\RR$, $H^{(s)}(\RR^n)$  is defined as
\begin{equation}\label{E:sob-r-1}
H^{(s)}(\RR^n):=\{u\in S'(\RR^n)\colon \langle\xi\rangle^{s}\widehat{u}\in L^2(\RR^n)\}.
\end{equation}
It turns out that $H^{(s)}(\RR^n)$ is a Hilbert space with the inner product
\begin{equation}\label{E:sob-r-2}
(u,v)_{H^{(s)}}:=\int_{\RR^n}\langle\xi\rangle ^{2s}u(x)\overline{v(x)}\,dx,
\end{equation}
and the norm corresponding to~(\ref{E:sob-r-2}) will be denoted by $\|\cdot\|_{H^{(s)}}$.

Before defining the ``extended Sobolev scale" in the sense of~\cite{MM-09, MM-13, MM-14}, we describe the so-called $RO$-varying functions.

\subsection{$RO$-varying Functions at $\infty$}\label{SS-1-8} In the definitions of this section we follow the terminology of Section 2.4.1 in~\cite{MM-14}.
A function $\varphi\colon [1,\infty)\to (0,\infty)$ is said to be \emph{$RO$-varying at infinity} if
\begin{enumerate}
  \item [(i)] $\varphi$ is Borel measurable
  \item [(ii)] there exist numbers $a>1$ and $c\geq 1$ (depending on $\varphi$) such that
  \begin{equation}\label{E:RO-1}
    c^{-1}\leq\frac{\varphi(\lambda t)}{\varphi(t)}\leq c, \qquad\textrm{for all }t\geq 1,\,\,\lambda\in [1,a].
  \end{equation}
\end{enumerate}
In the subsequent discussion, the relation $\varphi\in RO$ means that a function $\varphi\colon [1,\infty)\to (0,\infty)$ is $RO$-varying at infinity.

By Proposition 1 of~\cite{MM-13}, if $\varphi\in RO$, then $\varphi$ is bounded and separated from zero on every interval of the form $[1,b]$ with $b>1$. Furthermore, according to the same proposition, the condition~(\ref{E:RO-1}) has the following equivalent formulation: there exist numbers $s_0\leq s_1$ and $c\geq 1$  such that
\begin{equation}\label{E:RO-2}
   t^{-s_0}\varphi(t)\leq c\tau^{-s_0}\varphi(\tau),\qquad \tau^{-s_1}\varphi(\tau)\leq ct^{-s_1}\varphi(t) \qquad\textrm{for all }1\leq t\leq \tau.
  \end{equation}

Lastly, we recall the concept of lower/upper Matuszewska indices of $\varphi\in RO$. Let $\varphi\in RO$. Setting $\lambda:=\frac{\tau}{t}$, we can rewrite~(\ref{E:RO-2}) as
\begin{equation}\label{E:RO-3}
    c^{-1}\lambda^{s_0}\leq\frac{\varphi(\lambda t)}{\varphi(t)}\leq c\lambda^{s_1}, \qquad\textrm{for all }t\geq 1,\,\,\lambda\geq 1.
  \end{equation}
We define the \emph{lower Matuszewska index} $\sigma_0(\varphi)$ as the supremum of all $s_0\in\RR$ such that the leftmost inequality in~(\ref{E:RO-3}) is satisfied. Likewise, we define the \emph{upper Matuszewska index} $\sigma_1(\varphi)$ as the infimum of all $s_1\in\RR$ such that the rightmost inequality in~(\ref{E:RO-3}) is satisfied. Note that $-\infty<\sigma_0(\varphi)\leq \sigma_1(\varphi)<\infty$.

\subsection{Extended Sobolev Scale on $\RR^n$}\label{SS-1-9} For $\varphi\in RO$ we define $H^{\varphi}(\RR^n)$ as follows:
\begin{equation}\label{E:sob-phi-1}
H^{\varphi}(\RR^n):=\{u\in S'(\RR^n)\colon \varphi(\langle\xi\rangle)\widehat{u}\in L^2(\RR^n)\}.
\end{equation}
As in the case of the usual Sobolev scale,  $H^{\varphi}(\RR^n)$ is a Hilbert space with the inner product
\begin{equation}\label{E:sob-phi-2}
(u,v)_{H^{\varphi}(\RR^n)}:=\int_{\RR^n} [\varphi(\langle\xi\rangle)]^2 u(x)\overline{v(x)}\,dx,
\end{equation}
and the norm corresponding to~(\ref{E:sob-phi-2}) will be denoted by $\|\cdot\|_{H^{\varphi}(\RR^n)}$.
The space $H^{\varphi}(\RR^n)$ is a special case of the so-called H\"ormander space from Section 10.1 of~\cite{Hor-pdo-book}. Note that if $\varphi(t)=t^{s}$, $s\in\RR$, the space $H^{\varphi}(\RR^n)$ leads to the usual Sobolev space $H^{(s)}(\RR^n)$. In this article, the term \emph{extended Sobolev scale} on $\RR^n$ refers to the class of spaces $\{H^{\varphi}(\RR^n)\colon \varphi\in \RR\}$.

Having defined the (extended) Sobolev scale on $\RR^n$ we can now use the geodesic trivialization from Section~\ref{SS-1-5} above to describe the corresponding (extended) Sobolev scale on $X$.

\subsection{Extended Sobolev Scale on $X$}\label{SS-1-10} Let $\mathscr{T}=(V_j, \gamma_{e_j}, h_j)_{j=1}^{\infty}$ be a geodesic trivialization as in Section~\ref{SS-1-5}. For $\varphi\in RO$, we define $H^{\varphi}(X,\mathscr{T})$ as the space of all $u\in \mathcal{D}'(X)$ such that
\begin{equation}\label{E:sob-phi-norm-X}
\|u\|^2_{H^{\varphi}(X,\mathscr{T})}:=\sum_{j=1}^{\infty}\|(h_ju)\circ \gamma_{e_j})|\|^2_{H^{\varphi}(\RR^n)}<\infty,
\end{equation}
where $\|\cdot\|_{H^{\varphi}(\RR^n)}$ is as in Section~\ref{SS-1-7}. The right hand side of~(\ref{E:sob-phi-norm-X}) suggests that we view $(h_ju)\circ \gamma_{e_j}$ as a function defined on all of $\RR^n$. To explain this, we refer back to the Section~\ref{SS-1-5}, where $\gamma_{e_j}\colon\widehat{K}_{\varepsilon}\to V_j$ is a deiffeomorphism, as described in~(\ref{E:geo-def-1}). This, together with the inclusion $\supp h_j\subset V_j$, tells us that we can view $(h_ju)\circ \gamma_{e_j}$ as a function defined on all of $\RR^n$, extended by zero on the set $\RR^n\backslash \widehat{K}_{\varepsilon}$.

In the special case $\varphi(t)=t^{s}$, $s\in\RR$, the space $H^{\varphi}(X,\mathscr{T})$ leads to the usual Sobolev space $H^{(s)}(X,\mathscr{T})$.

\begin{remark}\label{R:ind-triv} In Theorem~\ref{T:ind-triv} we will show that, up to the norm equivalence, the space $H^{\varphi}(X,\mathscr{T})$ does not depend on the choice of geodesic trivialization $\mathscr{T}=(V_j, \gamma_{e_j}, h_j)_{j=1}^{\infty}$. To simplify our notations, instead of $H^{\varphi}(X,\mathscr{T})$, we shall start using the symbol $H^{\varphi}(X)$ immediately, rather than waiting until we prove Theorem~\ref{T:ind-triv}.
\end{remark}


In addition to the extended Sobolev scale on $X$, the first theorem of our article uses some terminology from Section 1.1.1 and Section 1.1.2 of~\cite{MM-14} concerning interpolation of a pair of Hilbert spaces with a function parameter, which we recall below.

\subsection{Interpolation Between Hilbert Spaces}\label{SS-2-1} By an \emph{admissible pair} of separable complex Hilbert spaces we mean an ordered pair $[\mathscr{H}_{0},\mathscr{H}_{1}]$ such that $\mathscr{H}_{1}\hookrightarrow\mathscr{H}_{0}$, with the embedding being continuous and dense.  As indicated in Section 1.2.1 of~\cite{Lions-72}, every admissible pair $[\mathscr{H}_{0},\mathscr{H}_{1}]$ is equipped with a so-called \emph{generating operator} $J$, such that
\begin{enumerate}
  \item [(i)] $J$ is a positive self-adjoint operator in $\mathscr{H}_{0}$ with $\dom(J)=\mathscr{H}_{1}$;
   \item [(ii)] $\|Ju\|_{\mathscr{H}_{0}}=\|u\|_{\mathscr{H}_{1}}$, for all $u\in \dom(J)=\mathscr{H}_{1}$.
\end{enumerate}

It turns out (see Section 1.1.1 in~\cite{MM-14}) that the operator $J$ is uniquely determined by the admissible pair $[\mathscr{H}_{0},\mathscr{H}_{1}]$.

Let $\mathcal{B}$ be the set of all Borel measurable functions $\psi\colon (0,\infty)\to (0,\infty)$ satisfying the following two properties: $\psi$ is bounded that on every interval $[a,b]$ with $0 <a<b<\infty$, and $\frac{1}{\psi}$ is bounded on every interval $(c,\infty)$ with $c > 0$. For an admissible pair $\mathscr{H}:=[\mathscr{H}_{0},\mathscr{H}_{1}]$ with generating operator $J$ and for a function $\psi\in \mathcal{B}$, spectral calculus gives rise to a (positive self-adjoint) operator $\psi(J)$ in $\mathscr{H}_{0}$. We define a (separable, Hilbert) space $[\mathscr{H}_{0},\mathscr{H}_{1}]_{\psi}$ (or, in simpler notation, $\mathscr{H}_{\psi}$) as follows: $[\mathscr{H}_{0},\mathscr{H}_{1}]_{\psi}:=\dom(\psi(J))$ with the inner product
\begin{equation}\label{E:H-psi}
(u,v)_{\mathscr{H}_{\psi}}:=(\psi(J)u,\psi(J)v)_{\mathscr{H}_{0}},
\end{equation}
and the corresponding norm $\|u\|_{\mathscr{H}_{\psi}}:=\|\psi(J)u\|_{\mathscr{H}_{0}}$,
where $(\cdot,\cdot)_{\mathscr{H}_{0}}$ and $\|\cdot\|_{\mathscr{H}_{0}}$ are the inner product and the norm in ${\mathscr{H}_{0}}$.

Having defined the space $[\mathscr{H}_{0},\mathscr{H}_{1}]_{\psi}$, we can now give the definition of an interpolation parameter $\psi\in\mathcal{B}$. We say that a function $\psi\in\mathcal{B}$ is an \emph{interpolation parameter} if the following condition is satisfied for all admissible pairs $\mathscr{H}:=[\mathscr{H}_{0},\mathscr{H}_{1}]$ and $\mathscr{K }:=[\mathscr{K}_{0},\mathscr{K}_{1}]$ and for all linear operators $T$ with $\mathscr{H}_{0}\subseteq \dom(T)$: if the restrictions $T|_{\mathscr{H}_{0}}$ and $T|_{\mathscr{H}_{1}}$ act as bounded linear operators $T\colon \mathscr{H}_{0}\to \mathscr{K}_{0}$ and $T\colon \mathscr{H}_{1}\to \mathscr{K}_{1}$, then the restriction
$T|_{\mathscr{H}_{\psi}}$ acts as a bounded linear operator $T\colon \mathscr{H}_{\psi}\to \mathscr{K}_{\psi}$.

In this case, we say that the space $\mathscr{H}_{\psi}$ is obtained by interpolation of the pair $\mathscr{H}=[\mathscr{H}_{0},\mathscr{H}_{1}]$ using a function parameter $\psi\in\mathcal{B}$. Moreover, we have the following continuous dense embeddings: $\mathscr{H}_{1}\hookrightarrow\mathscr{H}_{\psi}\hookrightarrow\mathscr{H}_{0}$.

\subsection{Interpolation Space}\label{SS-1-11} Let $[\mathscr{H}_{0},\mathscr{H}_{1}]$ be an ordered pair  of separable complex Hilbert spaces such that $\mathscr{H}_{1}\hookrightarrow\mathscr{H}_{0}$, with the arrow indicating continuous embedding. We say that a Hilbert space $\mathscr{S}$ is an \emph{interpolation space with respect to a pair} $[\mathscr{H}_{0},\mathscr{H}_{1}]$ if the following conditions are satisfied:
\begin{enumerate}
  \item [(i)] we have continuous embeddings $\mathscr{H}_{1}\hookrightarrow\mathscr{S}\hookrightarrow\mathscr{H}_{0}$;
  \item [(ii)] any linear operator $T$ in $\mathscr{H}_{0}$ which acts as a bounded linear operator $T\colon \mathscr{H}_{0}\to \mathscr{H}_{0}$ and $T\colon \mathscr{H}_{1}\to \mathscr{H}_{1}$, has the property that $T\colon \mathscr{S}\to\mathscr{S}$ is also a bounded linear operator.
\end{enumerate}
\begin{remark}\label{R-rem-int} Part (ii) of the above definition leads to the following property (see Theorem 1.8 in~\cite{MM-14} or Theorem 2.4.2 in~\cite{BL-book}):
\begin{equation*}
  \|T\|_{\mathscr{S}\to \mathscr{S}}\leq C \max \{\|T\|_{\mathscr{H}_0\to \mathscr{H}_0},\|T\|_{\mathscr{H}_1\to \mathscr{H}_1}\},
\end{equation*}
where $C>0$ is a constant independent of $T$.
\end{remark}

\subsection{First Interpolation Result}\label{SS-1-12} We are ready to formulate a result about interpolation of a pair of (usual) Sobolev spaces $[H^{(s_0)}(X), H^{(s_1)}(X)]$, where $s_0<s_1$ are real numbers. By Corollary 3.8 in~\cite{Kor-91}, there is a continuous (and dense) embedding $H^{(s_1)}(X)\hookrightarrow H^{(s_0)}(X)$, making $[H^{(s_0)}(X), H^{(s_1)}(X)]$ an admissible pair.

The result stated below is an analogue of Theorem 1 of~\cite{MM-13} concerning the pair $[H^{(s_0)}(\RR^n), H^{(s_1)}(\RR^n)]$, $s_0<s_1$.
For the corresponding result in the case when $X$ is a closed manifold, see Theorem 2.24 of~\cite{MM-14}.

\begin{theorem}\label{T:main-1} Assume that $X$ is a manifold of bounded geometry. Then the following are equivalent:
\begin{enumerate}
  \item [(i)] A Hilbert space $\mathscr{S}$ is an interpolation space with respect to a pair\\
  $[H^{(s_0)}(X), H^{(s_1)}(X)]$, where $s_0<s_1$ are some real numbers.
  \item [(ii)] Up to norm equivalence, we have  $\mathscr{S}=H^{\varphi}(X)$, for some function $\varphi\in RO$ satisfying the condition~(\ref{E:RO-3}).
\end{enumerate}
\end{theorem}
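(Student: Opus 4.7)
The plan is to reduce both implications to the corresponding $\RR^n$ result (theorem 1 of~\cite{MM-13}), exploiting the geodesic trivialization $(V_j, \gamma_{e_j}|_{\widehat{K}_{\varepsilon}}, h_j)_{j=1}^{\infty}$ of section~\ref{SS-1-5} and the key proposition~\ref{L-5}, which will allow one to express the $\psi$-interpolation norm on $X$, for a suitable function parameter $\psi$, in terms of componentwise $\psi$-interpolation norms on $\RR^n$. Admissibility of the pair $[H^{(s_0)}(X), H^{(s_1)}(X)]$ (needed to apply section~\ref{SS-2-1}) is already given by corollary 3.8 of~\cite{Kor-91}.

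For the direction $(ii)\Rightarrow(i)$, given $\varphi\in RO$ satisfying~(\ref{E:RO-3}), I introduce the function $\psi(t):= t^{s_0/(s_1-s_0)}\,\varphi(t^{1/(s_1-s_0)})$ on $[1,\infty)$, extended to $(0,\infty)$ so as to belong to $\mathcal{B}$. The Matuszewska-index bounds on $\varphi$ translate into the pseudoconcavity of $\psi$ with indices in $(0,1)$, which is precisely the characterization of interpolation parameters in the sense of section~\ref{SS-2-1} (see the analogous argument in~\cite{MM-14}). On $\RR^n$, spectral calculus applied to the generating operator of $[H^{(s_0)}(\RR^n), H^{(s_1)}(\RR^n)]$ (Fourier multiplication by $\langle\xi\rangle^{s_1-s_0}$) yields $[H^{(s_0)}(\RR^n), H^{(s_1)}(\RR^n)]_{\psi} = H^{\varphi}(\RR^n)$ with equivalent norms. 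Invoking proposition~\ref{L-5}, this identification transfers along the geodesic trivialization to give $[H^{(s_0)}(X), H^{(s_1)}(X)]_{\psi} = H^{\varphi}(X)$ up to norm equivalence. Because $\psi$ is an interpolation parameter, $H^{\varphi}(X)$ is automatically an interpolation space in the sense of section~\ref{SS-1-11}.

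For the converse $(i)\Rightarrow(ii)$, I appeal to the Ovchinnikov-type representation theorem for Hilbert interpolation spaces (see~\cite{MM-14}): any interpolation space $\mathscr{S}$ with respect to the admissible pair $[H^{(s_0)}(X), H^{(s_1)}(X)]$ coincides, up to norm equivalence, with $[H^{(s_0)}(X), H^{(s_1)}(X)]_{\psi}$ for some interpolation parameter $\psi\in\mathcal{B}$. I then set $\varphi(t):= t^{s_0}\psi(t^{s_1-s_0})$ for $t\geq 1$ and verify, by reversing the algebra of the previous paragraph, that $\varphi\in RO$ with Matuszewska indices in $[s_0, s_1]$, so that~(\ref{E:RO-3}) holds. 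Combining with the identification from the first direction yields $\mathscr{S}= H^{\varphi}(X)$ with equivalent norms.

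The main obstacle, and the technical heart of the argument, lies in proposition~\ref{L-5}: the $\psi$-interpolation norm of $[H^{(s_0)}(X), H^{(s_1)}(X)]$ must be shown equivalent to the $\ell^2$-sum of the $\psi$-interpolation norms of $(h_j u)\circ\gamma_{e_j}$ in $[H^{(s_0)}(\RR^n), H^{(s_1)}(\RR^n)]_{\psi}$. This localization relies on (a) the uniform boundedness of multiplication by $h_j\circ\gamma_{e_j}$ on $H^{(s_i)}(\RR^n)$, $i=0,1$, which follows from property (C2) of section~\ref{SS-1-4} together with standard multiplier estimates for Sobolev spaces; (b) the uniform control of the transition maps $\kappa_{e_i}\circ\gamma_{e_j}$ expressed in remark~\ref{R-ng-13}, so that change of chart preserves the $H^{(s_i)}(\RR^n)$-norms up to uniform constants; and (c) the finite multiplicity $N_0$ of the cover from (C1), ensuring that only boundedly many summands interact under any bounded operator on the pair. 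These bounded-geometry features, available through~\cite{G-13,Kor-91}, are exactly what make the localization to $\RR^n$ succeed and reduce the manifold statement to that of~\cite{MM-13}.
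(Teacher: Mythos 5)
Your proposal follows essentially the same route as the paper: both implications are reduced to the $\RR^n$ results of Mikhailets--Murach through the localization statement of proposition~\ref{L-5} (itself proved via the operators $F$, $G$, the $\ell^2$-interpolation lemma, and the uniform bounded-geometry estimates you list), with the Ovchinnikov representation theorem handling (i)$\Rightarrow$(ii) and the explicit construction of the interpolation parameter $\psi$ handling (ii)$\Rightarrow$(i). The only slip is the sign in your formula for $\psi$: it should read $\psi(t)=t^{-s_0/(s_1-s_0)}\varphi(t^{1/(s_1-s_0)})$ as in~(\ref{E:phi-psi-inverse}), so that $t^{s_0}\psi(t^{s_1-s_0})=\varphi(t)$; with your exponent $+s_0/(s_1-s_0)$ one gets $t^{2s_0}\varphi(t)$ instead.
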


Before stating a corollary of Theorem~\ref{T:main-1}, we recall the definition of an interpolation space with respect to a scale of Hilbert spaces. Let $\{\mathscr{H}_{s}\colon s\in\RR\}$ be a scale of Hilbert spaces such that there is a continuous embedding $\mathscr{H}_{s_1}\hookrightarrow\mathscr{H}_{s_0}$ for all $s_0<s_1$. We say that a Hilbert space $\mathscr{S}$ is an \emph{interpolation space with respect to the scale} $\{\mathscr{H}_{s}\colon s\in\RR\}$ if there exist numbers $s_0<s_1$ such that $\mathscr{S}$ is an interpolation space with respect to the pair $[\mathscr{H}_{s_0},\mathscr{H}_{s_1}]$.

\begin{corollary} \label{C:cor-1} Assume that $X$ is a manifold of bounded geometry. Then the following are equivalent:
\begin{enumerate}
  \item [(i)] A Hilbert space $\mathscr{S}$ is an interpolation space with respect to the scale\\
  $\{H^{(s)}(X)\colon s\in\RR\}$.
  \item [(ii)] Up to the norm equivalence, we have  $\mathscr{S}=H^{\varphi}(X)$, for some function $\varphi\in RO$.
\end{enumerate}
\end{corollary}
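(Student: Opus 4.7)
The plan is to derive this corollary directly from Theorem~\ref{T:main-1}. The only content beyond what is already in Theorem~\ref{T:main-1} is the observation that the ``hidden'' parameters $s_0<s_1$ in condition~(\ref{E:RO-3}) can always be supplied for any $\varphi\in RO$ (and conversely, any choice of $s_0<s_1$ puts us in the situation of Theorem~\ref{T:main-1}), so that the ``some pair'' quantifier in the definition of interpolation space with respect to a scale exactly matches the quantifier ``some $\varphi\in RO$.''

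For the direction (i) $\Rightarrow$ (ii), I would unpack the definition of interpolation space with respect to the scale $\{H^{(s)}(X)\colon s\in\RR\}$: this yields numbers $s_0<s_1$ for which $\mathscr{S}$ is an interpolation space with respect to the admissible pair $[H^{(s_0)}(X), H^{(s_1)}(X)]$. Theorem~\ref{T:main-1} then immediately provides $\varphi\in RO$ (satisfying~(\ref{E:RO-3}) for this particular $s_0, s_1$) with $\mathscr{S}=H^{\varphi}(X)$ up to norm equivalence. Dropping the auxiliary bound~(\ref{E:RO-3}) gives (ii).

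For the direction (ii) $\Rightarrow$ (i), assume $\mathscr{S}=H^{\varphi}(X)$ for some $\varphi\in RO$. The step to carry out is the selection of appropriate $s_0<s_1$ so that Theorem~\ref{T:main-1} applies. Using the lower and upper Matuszewska indices $\sigma_0(\varphi)\le \sigma_1(\varphi)$ of $\varphi$ discussed in section~\ref{SS-1-8}, choose any $s_0<\sigma_0(\varphi)$ and any $s_1>\sigma_1(\varphi)$; then $s_0<s_1$ and, by the very definitions of $\sigma_0(\varphi)$ and $\sigma_1(\varphi)$ as supremum/infimum, the inequalities in~(\ref{E:RO-3}) hold for some constant $c\ge 1$. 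With this $s_0, s_1$ in hand, Theorem~\ref{T:main-1} produces that $H^{\varphi}(X)=\mathscr{S}$ is an interpolation space with respect to $[H^{(s_0)}(X), H^{(s_1)}(X)]$, which, by definition, means $\mathscr{S}$ is an interpolation space with respect to the whole scale $\{H^{(s)}(X)\colon s\in\RR\}$.

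I do not anticipate a genuine obstacle: all the analytic content is already in Theorem~\ref{T:main-1}, and the corollary amounts to a quantifier bookkeeping argument together with the elementary fact that the Matuszewska indices of an $RO$-varying function are finite, which was explicitly noted at the end of section~\ref{SS-1-8}.
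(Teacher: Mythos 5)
Your proposal is correct and follows essentially the same route as the paper: the forward direction is read off directly from Theorem~\ref{T:main-1}, and for the converse you pick $s_0<\sigma_0(\varphi)$ and $s_1>\sigma_1(\varphi)$ so that $\varphi$ satisfies~(\ref{E:RO-3}) and Theorem~\ref{T:main-1} applies. This matches the paper's argument in Section~\ref{S:S-4}.
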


\subsection{Second Interpolation Result}\label{SS-1-13} The next result depicts the implication (ii)$\implies$(i) of Theorem~\ref{T:main-1} somewhat more explicitly. The result is an analogue of Theorem 5.1 of~\cite{MM-15}, established for the case $X=\Omega \subset\RR^n$, where $\Omega$ is a bounded domain with Lipschitz boundary. For the corresponding result in the case when $X$ is a closed manifold, see Theorem 2.22 of~\cite{MM-14} (or Theorem 2 of~\cite{MM-09}).

\begin{theorem}\label{T:main-2} Assume that $X$ is a manifold of bounded geometry. Let $\varphi\in RO$, $s_0<\sigma_0(\varphi)$, and $s_1>\sigma_1(\varphi)$. Define $\psi$ as follows:
\begin{equation}\label{E:phi-psi-inverse}
\psi(t):=\left\{\begin{array}{cc}
                  \tau^{-s_0/(s_1-s_0)}\varphi(\tau^{1/(s_1-s_0)}), & \tau\geq 1, \\
                  \varphi(1), & 0<\tau<1.
                \end{array}\right.
\end{equation}
Then, $\psi\in\mathcal{B}$ and $\psi$ is an interpolation parameter. Furthermore, up to norm equivalence, we have
\begin{equation*}
[H^{(s_0)}(X), H^{(s_1)}(X)]_{\psi}=H^{\varphi}(X).
\end{equation*}
\end{theorem}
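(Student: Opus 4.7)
The plan is to transfer the known interpolation identity on $\RR^n$ (an analogue of Theorem 5.1 of \cite{MM-15}, recorded as a preliminary lemma in section~\ref{S:S-2}) to the manifold $X$ via the geodesic trivialization, by realizing $H^{(s)}(X)$ as a retract of an $\ell^2$-sum of copies of $H^{(s)}(\RR^n)$. The statement that $\psi\in\mathcal{B}$ is an interpolation parameter whenever $s_0<\sigma_0(\varphi)\leq\sigma_1(\varphi)<s_1$ is a property of $RO$-functions independent of the manifold and will be quoted directly from the Mikhailets--Murach machinery.

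Fix a geodesic trivialization $(V_j,\gamma_{e_j}|_{\widehat{K}_{\varepsilon}},h_j)_{j=1}^{\infty}$ and define the chart operator
\begin{equation*}
P\colon u\longmapsto \bigl((h_j u)\circ \gamma_{e_j}\bigr)_{j=1}^{\infty}.
\end{equation*}
By the very definition~(\ref{E:sob-phi-norm-X}) (and its counterpart for $H^{(s)}$), $P$ is an isometric embedding of $H^{(s)}(X)$ into the $\ell^2$-direct sum $\bigoplus_j H^{(s)}(\RR^n)$ for every $s\in\RR$, and likewise of $H^{\varphi}(X)$ into $\bigoplus_j H^{\varphi}(\RR^n)$. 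Next, choose a second family $\chi_j\in\xcomp$ with $\chi_j\equiv 1$ on $\supp h_j$, $\supp\chi_j\subset V_j$, and with $\{\chi_j\circ\gamma_{e_j}\}$ bounded in every $C^k_{b}(\overline{B})$; such a family exists thanks to the partition-of-unity construction~(C2) together with the bounded-geometry assumption. Set
\begin{equation*}
Q\colon (v_j)_{j=1}^{\infty}\longmapsto \sum_{j=1}^{\infty}\chi_j\cdot (v_j\circ\gamma_{e_j}^{-1}),
\end{equation*}
each summand being extended by zero outside $V_j$. Then $QP=\mathrm{Id}$ on $\mathcal{D}'(X)$ because $\sum_j \chi_j h_j=\sum_j h_j=1$, and the finite-order property~(C1) of the covering, combined with uniform control on the $\chi_j$ and on the transition maps from remark~\ref{R-ng-13}, ensures that $Q$ is a bounded linear operator from $\bigoplus_j H^{(s)}(\RR^n)$ to $H^{(s)}(X)$ for every $s\in\RR$. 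This is essentially the content of the key proposition~\ref{L-5}.

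With $P$ and $Q$ in hand the conclusion becomes soft. Since $\psi$ is an interpolation parameter, $P$ and $Q$ descend to bounded maps between the $\psi$-interpolation spaces of the admissible pairs $[H^{(s_0)}(X),H^{(s_1)}(X)]$ and $[\bigoplus_j H^{(s_0)}(\RR^n),\bigoplus_j H^{(s_1)}(\RR^n)]$. The spectral-theoretic definition~(\ref{E:H-psi}) makes it immediate that the generating operator of an $\ell^2$-direct sum is the direct sum of generating operators, hence interpolation commutes with $\ell^2$-sums, and the $\RR^n$-result gives
\begin{equation*}
\bigl[\textstyle\bigoplus_j H^{(s_0)}(\RR^n),\,\bigoplus_j H^{(s_1)}(\RR^n)\bigr]_{\psi}=\bigoplus_j H^{\varphi}(\RR^n).
\end{equation*}
Because $QP=\mathrm{Id}$ realizes $X$ as an interpolation retract of the $\ell^2$-sum, $[H^{(s_0)}(X),H^{(s_1)}(X)]_{\psi}$ is isomorphic, with equivalent norm, to the image of $P$ inside $\bigoplus_j H^{\varphi}(\RR^n)$, which by~(\ref{E:sob-phi-norm-X}) is precisely $H^{\varphi}(X)$.

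The main obstacle is the boundedness of the reconstruction operator $Q$ in the middle step, because this is the point at which the global geometry of $X$ enters: one needs the finite-order property~(C1), the uniform estimates of~(C2), and the uniform transition-function bounds of remark~\ref{R-ng-13} in order to pass from the local $\RR^n$-Sobolev estimates to a global estimate on $X$. Once this bound is secured, as in proposition~\ref{L-5}, the rest of the argument is a formal application of the abstract interpolation framework.
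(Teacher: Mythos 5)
Your proposal is correct and follows essentially the same route as the paper: the paper reduces Theorem~\ref{T:main-2} to Proposition~\ref{L-4} (for the properties of $\psi$) and to Proposition~\ref{L-5}, which is proved by exactly your retract argument — the chart operator $F$ (your $P$), a reconstruction operator $G$ (your $Q$, with the cosmetic difference that the paper uses the cutoffs $H_j=(\sum_{k\in\mathcal{A}(j)}h_k)\circ\gamma_{e_j}$ rather than auxiliary $\chi_j$), the identity $GF=\mathrm{Id}$, the commutation of interpolation with $\ell_2$-sums (Proposition~\ref{L-6}), and the $\RR^n$ interpolation identity (Proposition~\ref{L-3}). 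The boundedness of $Q$, which you correctly flag as the one point where bounded geometry enters, is established in the paper via the uniform estimates (C1), (C2), remark~\ref{R-ng-13}, and Lemma~\ref{L:Triebel}, just as you indicate.
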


\subsection{Independence of Geodesic Trivialization}\label{SS-1-13a} As in the case of a compact manifold without boundary (see Theorem 3 in~\cite{MZ-24}), the space $H^{\varphi}(X,\mathscr{T})$ does not depend (up to norm equivalence) on the elements $\gamma_{e_j}$ and $h_j$ used in~(\ref{E:sob-phi-norm-X}):

\begin{theorem}\label{T:ind-triv} Assume that $X$ is a manifold of bounded geometry. Let $\varphi\in RO$. Then the space $H^{\varphi}(X,\mathscr{T})$, defined in Section~\ref{SS-1-10}, is independent (up to norm equivalence) of the choice of geodesic trivialization $\mathscr{T}=(V_j, \gamma_{e_j}, h_j)_{j=1}^{\infty}$ from Section~\ref{SS-1-5}.
\end{theorem}

\subsection{Third Interpolation Result}\label{SS-1-14} The next result shows the closedness of the class $\{H^{\varphi}(X)\colon \varphi\in RO\}$ with respect to interpolation with a function parameter. In the case $X=\Omega \subset\RR^n$, where $\Omega$ is a bounded domain with Lipschitz boundary, an analogous result was established in Theorem 5.2 of~\cite{MM-15}. For the corresponding result in the case when $X$ is a closed manifold, see Theorem 2.22 of~\cite{MM-14} (or Theorem 1 of~\cite{MM-09}).

\begin{theorem}\label{T:main-3} Assume that $X$ is a manifold of bounded geometry. Assume that $\varphi_0,\,\varphi_1\in RO$ and $\frac{\varphi_0}{\varphi_1}$ is bounded in a neighborhood of $\infty$. Let $\psi\in\mathcal{B}$ be an interpolation parameter. Then, the following properties hold:
\begin{enumerate}
  \item [(i)] $[H^{\varphi_0}(X), H^{\varphi_1}(X)]$ is an admissible pair;
  \item [(ii)] up to norm equivalence we have
  \begin{equation}\label{E:t-3-main}
[H^{\varphi_0}(X), H^{\varphi_1}(X)]_{\psi}=H^{\varphi}(X),
\end{equation}
\end{enumerate}
where
\begin{equation}\label{E:quad-int}
\varphi(t):=\varphi_0(t)\psi\left(\frac{\varphi_1(t)}{\varphi_0(t)}\right).
\end{equation}
\end{theorem}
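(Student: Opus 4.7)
The plan is to deduce Theorem~\ref{T:main-3} from Theorem~\ref{T:main-2} together with a standard reiteration identity for quadratic (Hilbert-space) interpolation with a function parameter, by funnelling everything through the classical Sobolev pair $[H^{(s_0)}(X), H^{(s_1)}(X)]$ for suitably chosen $s_0 < s_1$.

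For Part (i), since every $\varphi \in RO$ is bounded away from $0$ and $\infty$ on any interval $[1,b]$ (proposition 1 of~\cite{MM-13}, recalled in section~\ref{SS-1-8}), the hypothesis that $\varphi_0/\varphi_1$ is bounded near $\infty$ upgrades to $\varphi_0(t) \leq C\varphi_1(t)$ for all $t \geq 1$. Substituted into the defining series~(\ref{E:sob-phi-norm-X}) of both norms, this gives a continuous embedding $H^{\varphi_1}(X) \hookrightarrow H^{\varphi_0}(X)$. The embedding is dense because $\xcomp$ is dense in each $H^{\varphi_i}(X)$ by Theorem~\ref{T:main-4}, so the pair is admissible.

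For Part (ii), I would choose $s_0, s_1 \in \RR$ with $s_0 < \min\{\sigma_0(\varphi_0), \sigma_0(\varphi_1)\}$ and $s_1 > \max\{\sigma_1(\varphi_0), \sigma_1(\varphi_1)\}$. Theorem~\ref{T:main-2}, applied separately to $\varphi_0$ and $\varphi_1$, furnishes interpolation parameters $\psi_0, \psi_1 \in \mathcal{B}$ of the form~(\ref{E:phi-psi-inverse}) such that, up to norm equivalence,
\begin{equation*}
H^{\varphi_i}(X) = [H^{(s_0)}(X), H^{(s_1)}(X)]_{\psi_i}, \qquad i=0,1.
\end{equation*}
Since $\varphi_0/\varphi_1$ is bounded near $\infty$, direct inspection of~(\ref{E:phi-psi-inverse}) shows that $\psi_0/\psi_1$ is bounded near $\infty$ as well. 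I would then invoke the reiteration identity for quadratic interpolation (see section 1.1.4 of~\cite{MM-14}), whose proof is a short functional-calculus computation using the generating operator $J$ of $[H^{(s_0)}(X), H^{(s_1)}(X)]$: the generating operator of the new pair $[\mathscr{X}_{\psi_0}, \mathscr{X}_{\psi_1}]$ is $(\psi_1/\psi_0)(J)$, whence
\begin{equation*}
[[\mathscr{X}_0, \mathscr{X}_1]_{\psi_0}, [\mathscr{X}_0, \mathscr{X}_1]_{\psi_1}]_{\psi} = [\mathscr{X}_0, \mathscr{X}_1]_{\omega}, \qquad \omega(\tau) := \psi_0(\tau)\,\psi\!\left(\psi_1(\tau)/\psi_0(\tau)\right).
\end{equation*}
Making the substitution $\tau = t^{s_1-s_0}$ in the formula~(\ref{E:phi-psi-inverse}) for $\psi_0, \psi_1$, one computes $\omega(\tau) = \tau^{-s_0/(s_1-s_0)}\varphi(\tau^{1/(s_1-s_0)})$, where $\varphi$ is precisely the function defined in~(\ref{E:quad-int}); that is, $\omega$ is the parameter produced by~(\ref{E:phi-psi-inverse}) from $\varphi$. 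A final application of Theorem~\ref{T:main-2} (in the reverse direction) then identifies $[H^{(s_0)}(X), H^{(s_1)}(X)]_{\omega}$ with $H^{\varphi}(X)$, completing the proof.

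The main obstacle will be verifying that $\varphi(t) := \varphi_0(t)\psi(\varphi_1(t)/\varphi_0(t))$ lies in $RO$ and has Matuszewska indices inside $(s_0, s_1)$, so that the last application of Theorem~\ref{T:main-2} is legitimate and so that the symbol $H^{\varphi}(X)$ is even defined. The tool for this is the characterization (due to Peetre--Foias--Lions, discussed in section 1.1 of~\cite{MM-14}) that interpolation parameters in $\mathcal{B}$ are exactly the pseudo-concave functions on $(0,\infty)$; combining this with the two-sided power bounds~(\ref{E:RO-3}) for $\varphi_0, \varphi_1$ and with the boundedness of $\varphi_0/\varphi_1$ near $\infty$ should yield the required $RO$-type estimates on $\varphi$, but pinning down the Matuszewska indices of $\varphi$ from those of $\varphi_0, \varphi_1$ and the pseudo-concavity of $\psi$ is the delicate technical point of the argument.
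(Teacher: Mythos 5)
Your overall route is the same as the paper's: both proofs funnel the pair $[H^{\varphi_0}(X),H^{\varphi_1}(X)]$ through the ambient Sobolev pair $[H^{(s_0)}(X),H^{(s_1)}(X)]$ via theorem~\ref{T:main-2}, and then apply the abstract reiteration theorem for quadratic interpolation with a function parameter (theorem 1.3 of~\cite{MM-14}, stated in the paper as proposition~\ref{P-MAIN-3-1}); your computation that $\omega(\tau)=\psi_0(\tau)\psi(\psi_1(\tau)/\psi_0(\tau))=\tau^{-s_0/(s_1-s_0)}\varphi(\tau^{1/(s_1-s_0)})$ is exactly the identity the paper checks. Your direct argument for part (i) (pointwise comparison $\varphi_0\leq C\varphi_1$ on $[1,\infty)$ fed into the defining norms, plus density of $\xcomp$ from theorem~\ref{T:main-4}) is a legitimate, slightly more elementary alternative to the paper's derivation of admissibility from proposition~\ref{P-MAIN-3-1}(i).

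The one genuine gap is the final step, and it stems from trying to apply theorem~\ref{T:main-2} ``in the reverse direction.'' That theorem presupposes $\varphi\in RO$ together with the strict index conditions $s_0<\sigma_0(\varphi)$, $s_1>\sigma_1(\varphi)$, none of which you have established for the composite function $\varphi$ of~(\ref{E:quad-int}); you correctly flag this as the delicate point but leave it unresolved, and in fact the strict index inequalities need not follow from your data (satisfying~(\ref{E:RO-3}) with exponents $s_0,s_1$ only gives $\sigma_0(\varphi)\geq s_0$ and $\sigma_1(\varphi)\leq s_1$). The obstacle disappears if you instead use the forward-direction statement, proposition~\ref{L-5}: for \emph{any} $s_0<s_1$ and \emph{any} interpolation parameter $\omega$, the function $\varphi(t):=t^{s_0}\omega(t^{s_1-s_0})$ automatically lies in $RO$ and satisfies $[H^{(s_0)}(X),H^{(s_1)}(X)]_{\omega}=H^{\varphi}(X)$, with no index verification required. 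What you must check instead is that $\omega$ is an interpolation parameter, and this is supplied by part (iii) of proposition~\ref{P-MAIN-3-1} (equivalently, via proposition~\ref{L-1}, pseudoconcavity near $\infty$ is preserved under the composition $\omega=\psi_0\cdot\psi(\psi_1/\psi_0)$), since $\psi_0,\psi_1$ are interpolation parameters by proposition~\ref{L-4} and $\psi$ is one by hypothesis. With that substitution your argument closes and coincides with the paper's proof.
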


\subsection{Further Properties of $H^{\varphi}(X)$}\label{SS-1-15} In the following theorem we list further properties of the class $\{H^{\varphi}(X)\colon \varphi\in RO\}$. Analogous properties in the case $X=\RR^n$ were established in Proposition 2 of~\cite{MM-13} (or Proposition 2.6 in~\cite{MM-14}).

\begin{theorem}\label{T:main-4} Assume that $X$ is a manifold of bounded geometry. Then, we have the following properties:
\begin{enumerate}

\item [(i)] Assume that $\varphi\in RO$. Then $\xcomp$ is dense in $H^{\varphi}(X)$.

\item [(ii)] Assume that $\varphi_0,\,\varphi_1\in RO$ and $\frac{\varphi_0}{\varphi_1}$ is bounded in a neighborhood of $\infty$. Then we have a continuous embedding $H^{\varphi_1}(X)\hookrightarrow H^{\varphi_0}(X)$.

\item [(iii)] Assume that $\varphi\in RO$. Then, the sesquilinear form~(\ref{E:inner-l-2-x}) extends to a sesquilinear duality (separately continuous sesquilinear form)
\begin{equation}\label{E:inner-product-h-phi}
(\cdot,\cdot)\colon H^{\varphi}(X)\times H^{\frac{1}{\varphi}}(X)\to\mathbb{C}.
\end{equation}
The spaces $H^{\varphi}(X)$ and $H^{\frac{1}{\varphi}}(X)$ are mutually dual relative to the duality~(\ref{E:inner-product-h-phi}).
\end{enumerate}
\end{theorem}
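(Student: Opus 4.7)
The plan is to reduce each of the four assertions to a combination of (a) the corresponding result for $H^{\varphi}(\RR^n)$ (proposition 2 of~\cite{MM-13}), (b) the known analogues for the ordinary Sobolev scale $H^{(s)}(X)$ from~\cite{G-13,Kor-91}, and (c) the interpolation identity of theorem~\ref{T:main-2}. Geodesic trivialization together with the uniform multiplier bounds from property (C2) in section~\ref{SS-1-4} are what transfer each $\RR^n$-fact to $X$.

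Item (iii) is the most direct: the $\RR^n$ embedding $H^{\varphi_1}(\RR^n)\hookrightarrow H^{\varphi_0}(\RR^n)$ gives a constant $C$ with $\|w\|_{H^{\varphi_0}(\RR^n)}\leq C\|w\|_{H^{\varphi_1}(\RR^n)}$ for every $w$; applying this to $w=(h_j u)\circ \gamma_{e_j}$ and summing over $j$ produces the corresponding estimate on $X$. For (i), recall that the ordinary Sobolev spaces $H^{(s)}(X)$ are known to be independent of the chosen geodesic trivialization, up to norm equivalence (see~\cite{G-13,Kor-91}). Given two trivializations $\mathcal{T}$ and $\mathcal{T}'$, fix $s_0<\sigma_0(\varphi)$ and $s_1>\sigma_1(\varphi)$ and let $\psi$ be the interpolation parameter from theorem~\ref{T:main-2}. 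The $\mathcal{T}$- and $\mathcal{T}'$-norms are already equivalent on each of $H^{(s_0)}(X)$ and $H^{(s_1)}(X)$. Applying the interpolation-parameter property of $\psi$ to the identity map (in both directions) yields equivalence of the $\mathcal{T}$- and $\mathcal{T}'$-norms on $[H^{(s_0)}(X),H^{(s_1)}(X)]_\psi$, which by theorem~\ref{T:main-2} coincides (in each trivialization) with $H^{\varphi}(X)$.

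For (ii), I would combine density of $\xcomp$ in every $H^{(s)}(X)$---a standard bounded-geometry fact obtained via cutoff along exhausting geodesic balls and bounded-geometry mollification---with the continuous dense embedding $H^{(s_1)}(X)\hookrightarrow [H^{(s_0)}(X),H^{(s_1)}(X)]_\psi=H^{\varphi}(X)$ provided by the interpolation framework of section~\ref{SS-2-1}. Any $u\in H^{\varphi}(X)$ is then approximated in the $H^{\varphi}$-norm by elements of $H^{(s_1)}(X)$, which in turn are approximated in the (stronger) $H^{(s_1)}$-norm, and hence in the $H^{\varphi}$-norm, by elements of $\xcomp$.

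For (iv), the $L^2$ pairing extends by writing $u=\sum_j h_j u$ and introducing cutoffs $\tilde h_j\in\xcomp$ with $\tilde h_j\equiv 1$ on $\supp h_j$:
\begin{equation*}
(u,v)=\sum_j \int_{\RR^n}\bigl((h_j u)\circ \gamma_{e_j}\bigr)\,\overline{(\tilde h_j v)\circ \gamma_{e_j}}\,J_j\,dx,
\end{equation*}
with $J_j$ the Jacobian of $\gamma_{e_j}$. Bounded geometry makes multiplication by $J_j$ and by $\tilde h_j\circ \gamma_{e_j}$ bounded on both $H^{\varphi}(\RR^n)$ and $H^{1/\varphi}(\RR^n)$, so each summand is controlled by the $\RR^n$ duality of $H^{\varphi}$ with $H^{1/\varphi}$; Cauchy--Schwarz in $j$ plus the finite-overlap property (C1) then yields joint continuity. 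I expect the main difficulty to be upgrading this continuous pairing to \emph{mutual} duality, i.e.\ showing that every bounded antilinear functional on $H^{\varphi}(X)$ is represented by some $v\in H^{1/\varphi}(X)$. My plan here is to identify $H^{\varphi}(X)$ with $[H^{(s_0)}(X),H^{(s_1)}(X)]_\psi$ via theorem~\ref{T:main-2}, invoke the duality theorem for interpolation with a function parameter (section 1.1 of~\cite{MM-14}), and combine it with the standard $L^2$-duality $[H^{(s)}(X)]^*\cong H^{(-s)}(X)$ to identify the resulting dual with $H^{1/\varphi}(X)$.
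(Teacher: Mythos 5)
Your proposal is correct and follows essentially the same route as the paper: parts (i), (ii), and (iv) are handled exactly as in the paper's proof (trivialization-independence of $H^{(s)}(X)$ plus interpolation of the identity map; density of $\xcomp$ in $H^{(s_1)}(X)$ combined with the dense embedding $H^{(s_1)}(X)\hookrightarrow H^{\varphi}(X)$; and, for mutual duality, the duality theorem for interpolation with a function parameter applied to $[H^{(-s_1)}(X),H^{(-s_0)}(X)]_{\widetilde{\psi}}$ with $\widetilde{\psi}(t)=t/\psi(t)$, identified with $H^{\frac{1}{\varphi}}(X)$). The only divergence is in (iii), where you estimate the localized norms termwise using $\varphi_0\leq C\varphi_1$ on $[1,\infty)$ and sum over $j$, whereas the paper simply invokes the admissibility of the pair $[H^{\varphi_0}(X),H^{\varphi_1}(X)]$ supplied by theorem~\ref{T:main-3}; your version is more elementary and equally valid.
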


Before stating our next result, we review the notion of proper uniform pseudo-differential operator on $X$.

\subsection{H\"ormander Class $S^{m}(\RR^{n}\times \RR^{n})$}\label{SS-1-16} We begin by recalling a special case of the (uniform) H\"ormander symbol class:
\begin{definition}\label{D-hormander-class} Let $m\in\RR$. The notation $S^{m}(\RR^{n}\times \RR^{n})$ refers to the set of (complex-valued) functions $a(x,\xi)\in C^{\infty}(\RR^{2n})$ such that the following property is satisfied: for all $\alpha,\,\beta\in \NN_0^{n}$, there exists a constant $C_{\alpha,\beta}>0$ such that
\begin{equation}\label{E:est-m-2}
 |\p_{\xi}^{\alpha}\p_{x}^{\beta}a(x,\xi)|\leq C_{\alpha,\beta}\langle\xi\rangle^{m-|\alpha|},
\end{equation}
for all $x,\,\xi\in \RR^n$.
\end{definition}

\begin{remark}\label{R-1} To simplify our presentation, in this article we use $S^{m}(\RR^{n}\times \RR^{n})$, which is a special case $S^{m}_{1,0}(\RR^{n}\times \RR^{n})$ of the more general (uniform) H\"ormander class $S^{m}_{\rho,\delta}(\RR^{n}\times \RR^{n})$, $0\leq\rho,\delta\leq 1$, in which the right hand side of the above inequality is replaced by a more general term $\langle\xi\rangle^{m-\rho|\alpha|+\delta|\beta|}$. It turns out that Theorem~\ref{T:main-2} below remains valid for operators corresponding to the symbol class $S^{m}_{\rho,\delta}(\RR^{n}\times \RR^{n})$ with $0<\delta<\rho<1$.
\end{remark}

\subsection{Fourier Transform and Pseudo-Differential Operator with $S^{m}(\RR^{n}\times \RR^{n})$-type Symbol}\label{SS-1-17}
Recall the Schwartz space $S(\RR^n)$ of rapidly decreasing functions and the Fourier transform $\widehat{u}$ of a function $u\in S(\RR^n)$:
For a symbol $a\in S^{m}(\RR^{n}\times \RR^{n})$ we define the pseudo-differential operator $\textrm{Op}[a]$ (denoted interchangeably in this paper as $T_{a}$) with the help of the formula:
\begin{equation}\label{E:op-a}
  (T_{a}u)(x):= (2\pi)^{-n}\int_{{\RR}^{n}} e^{ix\cdot\xi}a(x,\xi)\widehat{u}(\xi)\,d\xi,\quad u\in S(\RR^n).
\end{equation}
As indicated in~\cite{W-pdo-book}, the mapping $T_{a}\colon S(\RR^n)\to  S(\RR^n)$ is a continuous linear operator, which extends to a continuous linear operator $T_{a}\colon S'(\RR^n)\to  S'(\RR^n)$, where $S'(\RR^n)$ is the space of tempered distributions.

\subsection{Proper Uniform PDO Classes}\label{SS-1-18} It turns out that proper uniform pseudo-differential operators (PUPDO) on manifolds of bounded geometry posses the usual properties encountered in the calculus based on H\"ormander-type symbols on $\RR^n$. The theory of PUPDO was developed by the authors of~\cite{MS-1} in the setting of unimodular Lie groups. Subsequently, this theory was extended to manifolds of bounded geometry in~\cite{Kor-91}. Here we summarize the definitions of various operator classes in the context of a manifold of bounded geometry $X$. In our presentation, we follow Section 1 of~\cite{MS-1} and Section 2 of~\cite{Kor-91}.

\begin{definition}\label{D-1-x} The class $U\Psi^{-\infty}(X)$ consists of operators $R$ whose kernels $K_{R}\in C^{\infty}_{b}(X\times X)$ satisfy the following property: there exists a number $c_{R}>0$ such that $K_{R}(x,y)=0$ for all $x,\,y$ with $d_{g}(x,y)>c_{R}$.
\end{definition}

In the discussion below, the notations $B$, $e\in O(X)$, and $\gamma_{e}\colon B\to X$ are as in Remark~\ref{R:b-geo}.


\begin{definition}\label{D-2-x} Let $m\in\RR$. The class $FS^{m}(B)$ is comprised of the families $\{a_{e}\in C^{\infty}(B\times \RR^n)\colon e\in O(X)\}$ obeying the estimates~(\ref{E:est-m-2}), where the constants $C_{\alpha,\beta}$ do not depend on the frames $e$, and the wording ``for all $x\in\RR^n$" is replaced by ``for all $x\in B$."
\end{definition}

Given an element $\{a_{e}\in C^{\infty}(B\times \RR^n)\colon e\in O(X)\}\in FS^{m}(B)$, the formula~(\ref{E:op-a}) gives rise to a family of operators $T_{a_{e}}\colon C_{c}^{\infty}(B) \to C^{\infty}(B)$ indexed by $e\in O(X)$.

\begin{definition}\label{D-3-x} The class $\fpi(B)$ consists of the families $\{R_{e}\colon e\in O(X)\}$ of operators $R_{e}\colon C_{c}^{\infty}(B) \to C^{\infty}(B)$ whose kernels  $K_{R_{e}}$ belong to $C^{\infty}(B\times B)$ and obey the following property: for all $\alpha,\,\beta\in \NN_0^{n}$, there exists a constant $C_{\alpha,\beta}>0$ such that
\begin{equation}\label{E:est-m-3}
  |\p^{\alpha}_{x}\p^{\beta}_{y}K_{e}(x,y)|\leq C_{\alpha,\beta},
\end{equation}
for all $x,\,y\in B$, where the constants $C_{\alpha,\beta}>0$ do not depend on the frames $e$.
\end{definition}

The following definition combines the objects from Definition~\ref{D-2-x} and Definition~\ref{D-3-x}.

\begin{definition}\label{D-4-x} Let $m\in\RR$. The elements of the class $\fprl^{m}(B)$ are the families $\{A_{e}\colon e\in O(X)\}$ of operators $A_{e}\colon C_{c}^{\infty}(B) \to C^{\infty}(B)$ satisfying the following property: $A_{e}=T_{a_{e}}+R_{e}$, with $\{a_{e}\in C^{\infty}(B\times \RR^n)\colon e\in O(X)\}\in FS^{m}(B)$ and $\{R_{e}\colon e\in O(X)\}\in\fpi(B)$.
\end{definition}

In the definition below, the notation $\Delta_{X}$ indicates the set $\{(x,x)\in X\times X\colon x\in X\}$.

\begin{definition}\label{D-5-x} Let $m\in\RR$. The notation $\uprl^{m}(X)$ refers to the set of operators $A\colon C_{c}^{\infty}(X) \to C_{c}^{\infty}(X)$ satisfying the following conditions:
\begin{enumerate}
  \item [(i)] There exists a constant $c_{A}>0$ (depending on $A$) such that $K_{A}(x,y)=0$ for all $x,\,y$ with $d_{g}(x,y)>c_{A}$, where $K_{A}$ is the operator kernel of $A$.
  \item [(ii)] The kernel $K_{A}$ belongs to $C^{\infty}(X\times X\backslash\Delta_X)$ and satisfies the following condition:  for all $k_1,k_2\in\NN$ and all $k_3>0$, there exists a constant $C_{k_1,k_2,k_3}>0$ such that
      \begin{equation*}
        |\nabla_{x}^{k_1}\nabla_{y}^{k_2}K_{A}(x,y)|\leq C_{k_1,k_2,k_3},
      \end{equation*}
      for all $x,\,y$ with $d_{g}(x,y)>k_3$, where $\nabla^{k_j}$ is the $k_j$-th covariant derivative corresponding to the Levi--Civita connection.
  \item [(iii)] Let $e\in O(X)$ be a frame such that $\pi(e)=x$ and let $B$, $\tau_0$ be as in Remark~\ref{R:b-geo}. Let $A_{e}$ be the operator defined by the commutative diagram
\[
\begin{array}{ccc}
C_{c}^{\infty}(B(x,\tau_0)) & \xrightarrow{A} & C^{\infty}(B(x,\tau_0)) \\
\downarrow{\gamma_{e}^*} &  & \downarrow{\gamma_{e}^*} \\
C_{c}^{\infty}(B) & \xrightarrow{A_{e}} & C^{\infty}(B),
\end{array}
\]
where $(\gamma_{e}^*v)(y):=(v\circ \gamma_{e})(y)$. Then, the operator family $\{A_{e}\colon e\in O(X)\}$ belongs to $\fprl^{m}(B)$.
\end{enumerate}
\end{definition}
\begin{definition}\label{D-6-x-bdd} Let $m\in\RR$. A family $\mathscr{F}$ of operators in $\uprl^{m}(X)$ is said to be bounded in $\uprl^{m}(X)$ if the following two properties are satisfied:
\begin{enumerate}
  \item [(i)] For every $A\in\mathscr{F}$ conditions (i) and (ii) of Definition~\ref{D-5-x} are satisfied, with the constants independent of $A\in \mathscr{F}$.
  \item [(ii)] Let $e\in O(X)$ be a frame such that $\pi(e)=x$ and let $B$, $\tau_0$ be as in Remark~\ref{R:b-geo}. Let $A_{e}$ be the operator defined by the commutative diagram from part (iii) of Definition~\ref{D-5-x}. Then, the set $\widetilde{\mathscr{F}}:=\{\{A_{e}\colon e\in O(X)\}\colon A\in \mathscr{F}\}$  is bounded in $\fprl^{m}(B)$, that is, the constants in the estimates~(\ref{E:est-m-2}) and~(\ref{E:est-m-3}) are independent of the members of the set $\widetilde{\mathscr{F}}$.
\end{enumerate}
\end{definition}
Lastly, we recall the definition of elliptic PUPDO class:

\begin{definition}\label{D-6-x} Let $m\in\RR$. The class $\ueprl^{m}(X)$ consists of the operators $A$ with the following properties:
\begin{enumerate}
  \item [(i)] $A\in \uprl^{m}(X)$.
  \item [(ii)] Let $A_{e}$ be as in part (iii) of Definition~\ref{D-5-x} and let $\{a_{e}\colon e\in O(X)\}$ correspond to $A_{e}$ as in Definition~\ref{D-4-x}. Then there exist constants $C$ and $R$ such that
      \begin{equation}\label{E:elliptic-def-loc}
        |a_{e}(x,\xi)|\geq C|\xi|^{m},
       \end{equation}
for all $|\xi|>R$, all $x\in B$, and all $e\in O(X)$. (Here, $C$ and $R$ are independent of $x$, $\xi$, and $e\in O(X)$.)
\end{enumerate}
\end{definition}

\subsection{Mapping Property for PUPDO}\label{SS-1-19} We are ready to formulate a mapping property of an operator $A\in \uprl^{m}(X)$ with respect to the scale $\{H^{\varphi}(X)\colon\varphi\in RO\}$. The result stated below  is an analogue of Theorem 3 in~\cite{MM-09} concerning the case when $X$ is a closed manifold (see also Section 4 of~\cite{MM-13} for the corresponding result in the case $X=\RR^n$).

\begin{theorem}\label{T:main-5} Assume that $X$ is a manifold of bounded geometry. Assume that $A\in \uprl^{m}(X)$, where $m\in\RR$. Let $\varphi\in RO$. Then $A$ extends to a bounded linear operator
\begin{equation}\nonumber
A\colon H^{\varphi}(X)\to H^{t^{-m}\varphi}(X),
\end{equation}
where $t^{-m}\varphi$ indicates the product of the functions $t^{-m}$ and $\varphi$.

Furthermore, if $\mathscr{F}$ is a bounded family in $\uprl^{m}(X)$, then there exists a constant $C>0$ (depending on $\varphi\in RO$ but not on $A\in\mathscr{F}$) such that
\begin{equation}\label{E:bdd-A-fm}
\|A\|_{H^{\varphi}(X)\to H^{t^{-m}\varphi}(X)}\leq C,
\end{equation}
for all $A\in\mathscr{F}$.
\end{theorem}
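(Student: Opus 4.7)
The plan is to reduce the mapping property in the extended Sobolev scale to the known mapping property in the usual Sobolev scale via the quadratic interpolation result of Theorem~\ref{T:main-2}. The classical input, supplied by the PUPDO theory in~\cite{Kor-91}, is that for every $s\in\RR$ an operator $A\in\uprl^{m}(X)$ extends to a bounded linear operator $A\colon H^{(s)}(X)\to H^{(s-m)}(X)$, and the norm of this extension can be estimated uniformly in terms of finitely many constants appearing in Definition~\ref{D-5-x}; in particular, for a bounded family $\mathscr{F}\subset \uprl^{m}(X)$ in the sense of Definition~\ref{D-6-x-bdd} one has $\sup_{A\in\mathscr{F}}\|A\|_{H^{(s)}(X)\to H^{(s-m)}(X)}<\infty$.

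First I would check the elementary fact that $t^{-m}\varphi\in RO$ whenever $\varphi\in RO$ and $m\in\RR$, and that its Matuszewska indices are $\sigma_j(t^{-m}\varphi)=\sigma_j(\varphi)-m$ for $j=0,1$. Next I would pick $s_0<\sigma_0(\varphi)$ and $s_1>\sigma_1(\varphi)$, and apply Theorem~\ref{T:main-2} twice: once to $\varphi$ with the pair $[H^{(s_0)}(X),H^{(s_1)}(X)]$, once to $t^{-m}\varphi$ with the shifted pair $[H^{(s_0-m)}(X),H^{(s_1-m)}(X)]$. A direct substitution into the formula~(\ref{E:phi-psi-inverse}) shows that both instances produce the \emph{same} interpolation parameter $\psi$, since
\begin{equation*}
\tau^{-(s_0-m)/(s_1-s_0)}\bigl(\tau^{1/(s_1-s_0)}\bigr)^{-m}\varphi\bigl(\tau^{1/(s_1-s_0)}\bigr)=\tau^{-s_0/(s_1-s_0)}\varphi\bigl(\tau^{1/(s_1-s_0)}\bigr).
\end{equation*}
This identity of interpolation parameters is the key structural observation that makes the argument work.

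With $\psi$ now simultaneously realizing both sides as interpolation spaces, the classical mapping property gives bounded restrictions $A\colon H^{(s_j)}(X)\to H^{(s_j-m)}(X)$ for $j=0,1$. Since $\psi$ is an interpolation parameter in the sense of section~\ref{SS-2-1}, the restriction $A\colon[H^{(s_0)}(X),H^{(s_1)}(X)]_{\psi}\to[H^{(s_0-m)}(X),H^{(s_1-m)}(X)]_{\psi}$ is bounded, which by Theorem~\ref{T:main-2} is exactly the required boundedness $A\colon H^{\varphi}(X)\to H^{t^{-m}\varphi}(X)$. A small care point is that Theorem~\ref{T:main-2} yields norm equivalences, so one absorbs the equivalence constants into the final bound. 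To extend the operator from $\xcomp$ to all of $H^{\varphi}(X)$ in a consistent way, I would invoke the density assertion of Theorem~\ref{T:main-4}(ii), which guarantees that the continuous extension from $\xcomp$ is unique.

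For the uniform bound~(\ref{E:bdd-A-fm}) on a bounded family $\mathscr{F}\subset\uprl^{m}(X)$, I would combine two ingredients: the uniform version of Kordyukov's boundedness, which provides $C_j:=\sup_{A\in\mathscr{F}}\|A\|_{H^{(s_j)}(X)\to H^{(s_j-m)}(X)}<\infty$ for $j=0,1$; and the quantitative interpolation inequality attached to the interpolation parameter $\psi$ (analogous to the inequality recorded in Remark~\ref{R-rem-int} and contained in the proof of Theorem~\ref{T:main-2}), which estimates the norm of the interpolated operator by a constant depending only on $\psi$ times $\max\{C_0,C_1\}$. The expected main obstacle is not any single step but the bookkeeping around the second ingredient: one needs the interpolation functor $\mathscr{H}\mapsto\mathscr{H}_{\psi}$ to deliver a norm estimate in which the constant depends only on $\psi$ and on the embedding constants of the admissible pairs, not on the particular operator. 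This uniformity is standard for the quadratic interpolation method used in~\cite{MM-14}, but it must be invoked (or re-derived) in the present setting and then married to the uniform constants built into Definition~\ref{D-6-x-bdd}.
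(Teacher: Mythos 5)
Your proposal is correct and follows essentially the same route as the paper: both arguments start from Kordyukov's boundedness $A\colon H^{(s_j)}(X)\to H^{(s_j-m)}(X)$, realize $H^{\varphi}(X)$ and $H^{t^{-m}\varphi}(X)$ as interpolation spaces of the pairs $[H^{(s_0)}(X),H^{(s_1)}(X)]$ and $[H^{(s_0-m)}(X),H^{(s_1-m)}(X)]$ with the \emph{same} parameter $\psi$ (your substitution into~(\ref{E:phi-psi-inverse}) is exactly the paper's rewriting of~(\ref{E:phi-psi}) read in reverse), and conclude by interpolation, with the uniform bound for a bounded family obtained from the uniform classical bounds together with the quantitative estimate of Remark~\ref{R-rem-int}.
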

\subsection{An Embedding Result}\label{SS-1-20} The following embedding theorem is an analogue of part (vi) of Proposition 2.6 in~\cite{MM-14} for the case $X=\RR^n$. (See also Proposition 7.6 in~\cite{MM-21}.)
\begin{theorem}\label{T:main-6} Assume that $X$ is a manifold of bounded geometry. Let $k\in\NN_{0}$. Assume that $\varphi\in RO$ and
\begin{equation}\label{E:emb-cb-k}
\int_{1}^{\infty}\frac{t^{2k+n-1}}{\varphi^2(t)}\,dt<\infty.
\end{equation}
Then, we have a continuous embedding $H^{\varphi}(X)\hookrightarrow C_{b}^{k}(X)$, where $C_{b}^{k}(X)$ is as in Section~\ref{SS-1-6}.
\end{theorem}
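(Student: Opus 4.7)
The plan is to reduce the statement to the known $\RR^n$-analogue (proposition~2.6(vi) of~\cite{MM-14}) via the definition of $H^{\varphi}(X)$ in section~\ref{SS-1-10} together with the bounded-geometry features recorded in sections~\ref{SS-1-3}--\ref{SS-1-5}. The $\RR^n$-version states that under the integrability condition~(\ref{E:emb-cb-k}) there is a continuous embedding $H^{\varphi}(\RR^n)\hookrightarrow C_b^k(\RR^n)$, that is,
\begin{equation*}
\|w\|_{C_b^k(\RR^n)}\leq C_0\,\|w\|_{H^{\varphi}(\RR^n)}\qquad\textrm{for all } w\in H^{\varphi}(\RR^n).
\end{equation*}
Applying this to each piece $(h_ju)\circ\gamma_{e_j}\in H^{\varphi}(\RR^n)$ appearing in~(\ref{E:sob-phi-norm-X}) will give us $C^k$-control of each summand, and the geometric features of $X$ will let us assemble these controls into a bound for $u\circ\gamma_e$ in $C^k(\overline{B})$ that is uniform in $e\in O(X)$.

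More precisely, fix $u\in H^{\varphi}(X)$ and an arbitrary frame $e\in O(X)$ with $\pi(e)=x$. Since $\supp h_j\subset V_j$, on the ball $B(x,\tau_0)=\gamma_e(B)$ only those indices $j$ with $V_j\cap B(x,\tau_0)\neq\emptyset$ contribute to $u=\sum_j h_j u$. By the finite-order covering property (C1) of section~\ref{SS-1-4}, the number $M$ of such indices is bounded by a constant depending only on the covering (not on $x$ or $e$). On each set $V_j\cap B(x,\tau_0)$ we can write
\begin{equation*}
(h_ju)\circ\gamma_e = \bigl[(h_ju)\circ\gamma_{e_j}\bigr]\circ\bigl(\kappa_{e_j}\circ\gamma_e\bigr),
\end{equation*}
and the transition maps $\kappa_{e_j}\circ\gamma_e$ have derivatives of every order bounded uniformly in the frames $e$ and $e_j$, by the equivalent characterization of bounded geometry recalled in remark~\ref{R-ng-13}. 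The chain rule (applied up to order $k$, with the property (C2)(ii) absorbed into the constants) and the $\RR^n$-embedding then yield a uniform estimate
\begin{equation*}
\|(h_ju)\circ\gamma_e\|_{C^k(\overline{B})}\leq C_1\,\|(h_ju)\circ\gamma_{e_j}\|_{C_b^k(\RR^n)}\leq C_1 C_0\,\|(h_ju)\circ\gamma_{e_j}\|_{H^{\varphi}(\RR^n)}.
\end{equation*}

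Summing over the at most $M$ relevant indices and invoking the Cauchy--Schwarz inequality together with the definition~(\ref{E:sob-phi-norm-X}) gives
\begin{equation*}
\|u\circ\gamma_e\|_{C^k(\overline{B})}\leq C_1 C_0\sqrt{M}\Bigl(\sum_{j=1}^{\infty}\|(h_ju)\circ\gamma_{e_j}\|^2_{H^{\varphi}(\RR^n)}\Bigr)^{1/2}=C\,\|u\|_{H^{\varphi}(X)},
\end{equation*}
with $C$ independent of $e\in O(X)$. This is exactly the statement that the family $\{u\circ\gamma_e:e\in O(X)\}$ is bounded in $C^k(\overline{B})$ with norm controlled by $\|u\|_{H^{\varphi}(X)}$, which is the definition of a continuous embedding $H^{\varphi}(X)\hookrightarrow C_b^k(X)$ recalled in section~\ref{SS-1-6}.

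The main obstacle I expect is the passage from the $e_j$-adapted coordinates (which appear in the definition of $\|\cdot\|_{H^{\varphi}(X)}$) to the arbitrary $e$-adapted coordinates (which appear in the definition of $C_b^k(X)$). The justification relies crucially on the uniform bounds on the transition maps $\kappa_{e_j}\circ\gamma_e$ provided by bounded geometry and on the finite-order property of the covering; without these two inputs the summation and chain-rule step would not produce constants independent of the base point and the frame.
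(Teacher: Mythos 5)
Your proof is correct, but it follows a genuinely different route from the paper's. The paper localizes with a family of cutoffs $\chi_{x_0}:=\chi\circ\exp_{x_0}^{-1}$, observes that the multiplication operators $u\mapsto\chi_{x_0}u$ form a bounded family in $\uprl^{0}(X)$, and invokes the uniform mapping property of theorem~\ref{T:main-5} to get $\sup_{x_0}\|\chi_{x_0}u\|_{H^{\varphi}(X)}\leq C\|u\|_{H^{\varphi}(X)}$; it then only needs the $k=0$ embedding $H^{\varphi}(\RR^n)\hookrightarrow C_b(\RR^n)$ directly, and treats $k\geq 1$ by induction, trading one derivative for the replacement $\varphi\mapsto t^{-1}\varphi$ in the integrability condition. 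You instead transfer each piece $(h_ju)\circ\gamma_{e_j}$ to an arbitrary frame $e$ via the transition maps and use the full $C_b^k$ version of the $\RR^n$ embedding, handling all $k$ at once without any pseudodifferential machinery; this is more self-contained and arguably cleaner, while the paper's version recycles theorem~\ref{T:main-5} and mirrors Kordyukov's argument for the classical scale. Two small points you should tighten: remark~\ref{R-ng-13} as stated only bounds the transition maps $\kappa_{e_i}\circ\gamma_{e_j}$ between charts of the \emph{fixed} trivialization, whereas you need uniform $C^k$ bounds on $\kappa_{e_j}\circ\gamma_e$ for an \emph{arbitrary} frame $e$ --- this is the standard bounded-geometry statement (essentially condition (ii) in section~\ref{SS-1-3}), but it deserves an explicit reference; and bounding the number of $V_j$ meeting the larger ball $B(x,\tau_0)$ (with $\tau_0>\varepsilon$) requires a volume-comparison argument on top of the literal finite-order property (C1). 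Neither issue is a genuine gap, as both are routine consequences of bounded geometry.
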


Before stating the last result we recall the definition of the so-called (extended) Hilbert scale.
\subsection{Extended Hilbert scale}\label{SS-1-21} We begin by writing down some terminology from Section 2 of~\cite{MM-21}.
Let $\mathscr{H}$ be a separable complex Hilbert space with inner product $(\cdot,\cdot)_{\mathscr{H}}$ and norm $\|\cdot\|_{\mathscr{H}}$. Let $A$ be a self-adjoint operator in $\mathscr{H}$ satisfying $(Au,u)_{\mathscr{H}}\geq\|u\|^2_{\mathscr{H}}$ for all $u\in\dom (A)$.

With this description of $A$, using spectral calculus we define the operator $A^{s}$ for each $s\in\RR$. Note that $\dom(A^s)$ is dense in $\mathscr{H}$; in particular, if $s\leq0$ we have  $\dom (A^s)=\mathscr{H}$. We define $H_{A}^{(s)}$ as the completion of $\dom(A^s)$ with respect to the inner product
\begin{equation*}
(u,v)_{s}:=(A^su,A^sv)_{\mathscr{H}}, \qquad u,v\in \dom(A^s).
\end{equation*}
It turns out that $H_{A}^{(s)}$ is a separable Hilbert space whose inner product and norm will be denoted by $(\cdot,\cdot)_{s}$ and $\|\cdot\|_{s}$. We call $\{H_{A}^{(s)}\colon s\in\RR\}$ \emph{Hilbert scale generated by $A$} or, in short form, \emph{$A$-scale}. As indicated in Section 2 of ~\cite{MM-21}, for $s\geq 0$ we have $H_{A}^{(s)}=\dom (A^s)$, while for $s<0$ we have $H_{A}^{(s)}\supset \mathscr{H}$.

As explained in Section 2 of~\cite{MM-21}, for all $s_0<s_1$,  $[H_{A}^{(s_0)},H_{A}^{(s_1)}]$ constitutes an admissible pair (in the sense of Section~\ref{SS-2-1} above). Recalling the definition of interpolation space (see Section~\ref{SS-1-11} above), the term \emph{extended Hilbert scale generated by $A$} or
\emph{extended $A$-scale} refers to the set of all Hilbert spaces that serve as interpolation spaces with respect pairs of the form $[H_{A}^{(s_0)},H_{A}^{(s_1)}]$, $s_0<s_1$.

Recalling the hypotheses on  $A$ and using spectral calculus, for a Borel measurable function $\varphi\colon [1,\infty)\to(0,\infty)$,  we can define a (positive self-adjoint) operator $\varphi(A)$ in $\mathscr{H}$. Furthermore, we define $H_{A}^{\varphi}$ as the completion of
$\dom(\varphi(A))$ with respect to the inner product
\begin{equation*}
(u,v)_{\varphi}:=(\varphi(A)u,\varphi(A)v)_{\mathscr{H}}, \qquad u,v\in \dom(\varphi(A)).
\end{equation*}
It turns out (see Section 2 of~\cite{MM-21}) that $H_{A}^{\varphi}$ is a separable Hilbert space, and its inner product and norm will be denoted by $(\cdot,\cdot)_{\varphi}$ and $\|\cdot\|_{\varphi}$. Furthermore, as indicated in Section 2 of~\cite{MM-21}, we have $H_{A}^{\varphi}=\dom(\varphi(A))$ if and only if $0\notin\textrm{Spec}(\varphi(A))$.

\subsection{Generalized $A$-scale Property}\label{SS-1-22}
We begin with a list of assumptions on our operator:
\begin{enumerate}
\item [(H1)] $A\in\ueprl^{1}(X)$, where $\ueprl^{1}(X)$ is as in Definition~\ref{D-6-x};

\item [(H2)] $(Au,u)\geq \|u\|^2$, for all $u\in\xcomp$, where $(\cdot,\cdot)$ is as in~(\ref{E:inner-l-2-x}) and $\|\cdot\|$ is the corresponding norm in $L^2(X)$.
\end{enumerate}

\begin{remark}\label{R:kor} Under the assumptions (H1)--(H2), $A|_{\xcomp}$ is a formally self-adjoint operator belonging to the class $\ueprl^{1}(X)$. Therefore, by Corollary 3.15 in~\cite{Kor-91}, the operator $A|_{\xcomp}$ is essentially self-adjoint in $L^2(X)$.  Furthermore, by the mentioned result from~\cite{Kor-91}, the domain of the self-adjoint closure of $A$ is the space $H^{(1)}(X)$. For notational simplicity, we denote the self-adjoint closure of $A|_{\xcomp}$ again by $A$. With this clarification, as in Section~\ref{SS-1-21} above (now in the setting $\mathscr{H}=L^2(X)$), for a Borel function
$\varphi\colon [1,\infty)\to(0,\infty)$ we can define the space $H_{A}^{\varphi}$ corresponding to an operator $A$ satisfying (H1)--(H2).
\end{remark}

The following theorem is an analogue of Theorem 5.3 in~\cite{MM-21} for the case when $X$ is a closed manifold.
\begin{theorem}\label{T:main-7} Assume that $X$ is a manifold of bounded geometry. Let $\varphi\in RO$. Furthermore, assume that $A$ is an operator satisfying (H1)--(H2). Let $H^{\varphi}_{A}(X)$ is as in Remark~\ref{R:kor} and $H^{\varphi}(X)$ is as in Section~\ref{SS-1-10}.

Then, up to norm equivalence, we have
\begin{equation}\label{E:thm-7}
H^{\varphi}_{A}(X)=H^{\varphi}(X).
\end{equation}

\end{theorem}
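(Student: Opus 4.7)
The plan is to deduce Theorem~\ref{T:main-7} from Theorem~\ref{T:main-2} by first identifying the $A$-scale with the usual Sobolev scale at integer levels and then interpolating. The argument proceeds in three steps.

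First, I would establish $H_A^{(k)}(X) = H^{(k)}(X)$ with equivalent norms for every $k \in \ZZ$. The case $k=1$ is given directly by Korytov's essential self-adjointness result recalled in Remark~\ref{R:kor}, which identifies $\dom(A) = H^{(1)}(X)$; the norm equivalence $\|Au\| \sim \|u\|_{H^{(1)}(X)}$ then follows from the closed graph theorem. For $k \geq 2$, the PUPDO calculus of~\cite{Kor-91} yields that the $k$-fold composition of $A$ lies in $\ueprl^{k}(X)$; applying Theorem~\ref{T:main-5} (or its usual-Sobolev-scale predecessor from~\cite{Kor-91}) gives boundedness $A^k : H^{(k)}(X) \to L^2(X)$, while ellipticity of $A^k$ furnishes the reverse a~priori estimate $\|u\|_{H^{(k)}(X)} \leq C(\|A^k u\| + \|u\|)$ on $\xcomp$, which extends to $\dom(A^k)$ by density. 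The case $k<0$ is handled by duality between $H_A^{(k)}$ and $H_A^{(-k)}$, and between $H^{(k)}(X)$ and $H^{(-k)}(X)$, which transfers the norm equivalence.

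Second, I would choose integers $s_0 < \sigma_0(\varphi)$ and $s_1 > \sigma_1(\varphi)$ (possible since the Matuszewska indices are finite) and let $\psi$ be given by~(\ref{E:phi-psi-inverse}). Theorem~\ref{T:main-2} then yields $[H^{(s_0)}(X), H^{(s_1)}(X)]_{\psi} = H^{\varphi}(X)$ up to norm equivalence. On the $A$-side, the abstract extended Hilbert-scale interpolation result in section~2 of~\cite{MM-21}, applied to the self-adjoint operator $A \geq 1$ on $L^2(X)$, gives the analogous identification $[H_A^{(s_0)}, H_A^{(s_1)}]_{\psi} = H_A^{\varphi}(X)$ with the same interpolation parameter $\psi$. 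Since interpolation of admissible pairs by a fixed parameter depends, up to norm equivalence, only on the admissible-pair structure, Step~1 allows the endpoints of the $A$-scale pair to be replaced by their Sobolev-scale counterparts, and chaining the three equalities yields the desired $H_A^{\varphi}(X) = H^{\varphi}(X)$.

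The main obstacle I anticipate is in Step~1: verifying rigorously that the spectrally defined $k$-th power of the self-adjoint closure of $A$ agrees with the PUPDO composition $A \circ \cdots \circ A$ on $H^{(k)}(X)$, that this composition is elliptic in $\ueprl^{k}(X)$, and that the a~priori elliptic estimate holds globally on the (generally non-compact) manifold $X$. All three should follow from the calculus developed in~\cite{Kor-91} together with the bounded-geometry features of $X$, but they require careful verification, especially to obtain the global estimate uniformly in the chart data. Density of $\xcomp$ in $H^{\varphi}(X)$ (Theorem~\ref{T:main-4}(ii)) will also be useful to transfer norm equivalences from the dense subspace $\xcomp$ to the full spaces.
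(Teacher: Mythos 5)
Your proposal follows essentially the same route as the paper: identify $H_A^{(k)}(X)$ with $H^{(k)}(X)$ for integer $k$ (negative $k$ by duality), invoke the abstract interpolation formula $[H_A^{(s_0)},H_A^{(s_1)}]_{\psi}=H_A^{\varphi}$ from~\cite{MM-21}, and chain with Theorem~\ref{T:main-2}. The only minor difference is in the integer-level lemma for $k\geq 2$: you propose the PUPDO composition calculus plus an a priori elliptic estimate, whereas the paper iterates the isomorphism $A\colon H^{(1)}(X)\to L^2(X)$ together with the elliptic regularity statement of theorem 3.6 in~\cite{Kor-91}, which sidesteps the need to verify that the spectral power $A^k$ agrees with the PUPDO composition --- precisely the obstacle you flagged.
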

\begin{remark} The condition $A\in\ueprl^{1}(X)$ in (H1) can be replaced by $A\in\ueprl^{m}(X)$, with $m>0$. It turns out that in this situation the following variant of~(\ref{E:thm-7}) holds: $H^{\varphi}_{A}(X)=H^{\varphi_{m}}(X)$, where $\varphi_{m}(t):=\varphi(t^m)$, $t\geq 1$.
\end{remark}

\section{Preliminary Facts}\label{S:S-2} In this section we collect some interpolation results from~\cite{MM-08,MM-13, MM-14} used in the proof of Theorem~\ref{T:main-1}.

For future reference, we say that a Borel measurable function $\psi\colon (0,\infty)\to (0,\infty)$ is \emph{pseudoconcave in a neighborhood of} $\infty$ if there exists a concave function $\psi_1 \colon (c,\infty)\to (0,\infty)$, where $c>1$ is a large number, such that $\frac{\psi}{\psi_1}$ and $\frac{\psi_1}{\psi}$ are bounded on $(c,\infty)$.

We begin with a proposition for which we refer to Theorem 1.9 in~\cite{MM-14}.
\begin{prop}\label{L-1} A function $\psi\in\mathcal{B}$ is an interpolation parameter if and only if $\psi$ is pseudoconcave in a neighborhood of $\infty$.
\end{prop}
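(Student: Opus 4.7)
The plan is to prove the two directions separately, following the pattern of Ovchinnikov's theorem on Hilbert-space interpolation functions, which underlies Theorem 1.9 of \cite{MM-14}.

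For sufficiency (pseudoconcavity implies the interpolation property), the first step is to replace $\psi$ by an equivalent concave function. Since $\psi \in \mathcal{B}$ agrees up to multiplicative constants with some concave $\psi_1$ on $(c,\infty)$, and since for any admissible pair $[\mathscr{H}_0,\mathscr{H}_1]$ the dense continuous embedding $\mathscr{H}_1 \hookrightarrow \mathscr{H}_0$ forces the generating operator $J$ to satisfy $J \geq c_0 I$ for some $c_0 > 0$ (after which one may rescale so that $J \geq I$), only the values of $\psi_1$ for large arguments matter, so we may extend $\psi_1$ to a concave nondecreasing positive function on all of $(0,\infty)$ without changing $\mathscr{H}_\psi$ up to norm equivalence. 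The crucial fact is then that positive concave nondecreasing functions on $(0,\infty)$ are operator monotone (L\"owner's theorem). Translating the hypothesis that $T\colon \mathscr{H}_i \to \mathscr{K}_i$ is bounded ($i=0,1$) into the operator inequalities $T^* T \leq M_0^2 I$ and $T^* J_{\mathscr{K}}^2 T \leq M_1^2 J_{\mathscr{H}}^2$, one combines these via a Heinz--Kato type interpolation and the operator monotonicity of $\psi_1^2$ (after reduction to the power case $t^{2\theta}$ through the integral representation of operator monotone functions) to obtain $T^* \psi(J_{\mathscr{K}})^2 T \leq M^2 \psi(J_{\mathscr{H}})^2$, which is exactly the desired bound $\|Tu\|_{\mathscr{K}_\psi} \leq M\|u\|_{\mathscr{H}_\psi}$ on a dense subspace.

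For necessity (interpolation parameter implies pseudoconcavity), I would test the hypothesis against diagonal admissible pairs built from $\ell^2$. Fix a monotone sequence $1 \leq \alpha_1 < \alpha_2 < \cdots \to \infty$, and take $\mathscr{H}_0 := \ell^2(\NN)$, $\mathscr{H}_1 := \{(x_k): \sum \alpha_k^2 |x_k|^2 < \infty\}$, with generating operator the diagonal multiplier $J(x_k) := (\alpha_k x_k)$, so that $\mathscr{H}_{\psi} = \{(x_k): \sum \psi(\alpha_k)^2 |x_k|^2 < \infty\}$. For any pair of indices $j, k$, a carefully chosen rank-one or two-by-two block operator $T$ coupling the $j$-th and $k$-th coordinates has computable norms on $\mathscr{H}_0$ and $\mathscr{H}_1$; the quantitative interpolation bound from Remark~\ref{R-rem-int} then yields a pointwise inequality relating $\psi(\alpha_j)/\psi(\alpha_k)$ to the ratio $\alpha_j/\alpha_k$. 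Choosing the sequence $\{\alpha_k\}$ to be dense in $[1,\infty)$ and passing to the limit (using Borel measurability of $\psi$) promotes this to an inequality of the form $\psi(\lambda t)/\psi(t) \leq C \max(1,\lambda)$ valid for all large $t$ and all $\lambda > 0$, which a classical real-variables lemma (Section~1.4 of \cite{MM-14}) shows is equivalent to pseudoconcavity of $\psi$ near infinity.

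The main obstacle I expect is the necessity direction: one must design the test operators so that their norms on $\mathscr{H}_0$ and $\mathscr{H}_1$ are simultaneously sharp enough to extract genuine pseudoconcavity rather than a weaker two-sided growth estimate, and then carry out the delicate passage from discrete scalar inequalities on the weights $\alpha_k$ to the continuous functional characterization. Given the level of technicality involved, in practice one simply invokes Theorem~1.9 of \cite{MM-14}, which itself builds on Ovchinnikov's characterization of interpolation functions for Hilbert couples, rather than reproducing the full proof.
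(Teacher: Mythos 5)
The paper does not actually prove this proposition: it is stated with a pointer to Theorem~1.9 of \cite{MM-14}, so your closing remark that in practice one simply invokes that theorem is precisely what the paper does. Judged as a sketch of the underlying Ovchinnikov/Donoghue-type argument, your outline (reduction to a concave majorant plus an operator-theoretic sufficiency proof; weighted-$\ell^2$ test couples for necessity) is the right one, but two steps as written are not correct. First, a positive concave nondecreasing function on $(0,\infty)$ is \emph{not} operator monotone in general; L\"owner's theorem characterizes operator monotone functions as those admitting a Pick--Nevanlinna analytic continuation to the upper half-plane, and scalar concavity is strictly weaker. The standard repair is that every such function is equivalent, within a factor of $2$, to one of the form $a+bt+\int_0^\infty \frac{t}{t+s}\,d\mu(s)$ (using $\min(1,t/s)\le \frac{2t}{t+s}\le 2\min(1,t/s)$), and \emph{that} function is operator monotone; since an interpolation parameter is only required to give boundedness with a constant, equivalence up to multiplicative constants suffices. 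Without this repair the sufficiency direction collapses, because the Heinz--Kato step genuinely needs operator monotonicity, not scalar concavity.

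Second, in the necessity direction the passage from the discrete inequalities at the weights $\alpha_k$ to the functional inequality $\psi(\lambda t)/\psi(t)\le C\max(1,\lambda)$ for \emph{all} large $t$ cannot be accomplished ``using Borel measurability'': a Borel function in $\mathcal{B}$ is unconstrained off a dense set. One should instead either run the two-point test for every pair $(t,\tau)$ directly --- taking care that the constant of Remark~\ref{R-rem-int} is uniform over the couples used, e.g.\ by realizing all pairs inside a single diagonal couple whose generating operator has spectrum equal to $[1,\infty)$ --- or, more in the spirit of the paper's toolkit, invoke Proposition~\ref{L-2}: since $\psi$ is an interpolation parameter, $\mathscr{H}_\psi$ is an interpolation space for such a couple, hence coincides with $\mathscr{H}_{\psi_1}$ for some pseudoconcave $\psi_1\in\mathcal{B}$, and comparing the two norms on the spectrum yields $\psi\asymp\psi_1$ near infinity. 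With these two corrections your sketch matches the proof behind Theorem~1.9 of \cite{MM-14}; as submitted, both directions have a genuine gap.
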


The next proposition (see Theorem 11.4.1 in~\cite{O-84}) is instrumental in proving the implication (i)$\implies$(ii) of Theorem~\ref{T:main-1}.

\begin{prop}\label{L-2} Let $\mathscr{H}=[\mathscr{H}_{0},\mathscr{H}_{1}]$ be an admissible pair of Hilbert spaces. Assume that a space $\mathscr{S}$ is an interpolation space for the pair $\mathscr{H}$. Then, up to norm equivalence, we have $\mathscr{S}=\mathscr{H}_{\psi}$, for some function $\psi\in\mathcal{B}$ such that $\psi$ is pseudoconcave in a neighborhood of $\infty$.
\end{prop}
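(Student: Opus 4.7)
The plan is to exploit the spectral theorem for the generating operator $J$ of the admissible pair $\mathscr{H}=[\mathscr{H}_0,\mathscr{H}_1]$ and convert the problem into a symmetry argument driven by the interpolation property. Recall $J$ is positive self-adjoint on $\mathscr{H}_0$ with $\dom(J)=\mathscr{H}_1$ and $\|Ju\|_{\mathscr{H}_0}=\|u\|_{\mathscr{H}_1}$; since $\mathscr{H}_1\hookrightarrow\mathscr{H}_0$ continuously, after passing to $J+c I$ if needed one may assume $J\geq I$, so $\textrm{Spec}(J)\subset[1,\infty)$. The spectral theorem then represents $\mathscr{H}_0$ as a direct integral $\int^{\oplus}_{[1,\infty)}K(\lambda)\,d\mu(\lambda)$ in which $J$ is multiplication by $\lambda$, and $\|u\|^2_{\mathscr{H}_1}=\int\lambda^2\|u(\lambda)\|^2_{K(\lambda)}\,d\mu(\lambda)$.

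The next step applies the interpolation property recorded in remark~\ref{R-rem-int} to the family of bounded functional calculi $\{f(J):f\colon[1,\infty)\to\CC \text{ bounded Borel}\}$. Since $f(J)$ commutes with $J$, it acts boundedly on both $\mathscr{H}_0$ and $\mathscr{H}_1$ with operator norm equal to $\|f\|_{\infty}$; the interpolation property therefore gives $\|f(J)\|_{\mathscr{S}\to\mathscr{S}}\leq C\|f\|_{\infty}$ for a universal $C>0$. In particular, all spectral projections $E_J(B)$ act boundedly on $\mathscr{S}$, which furnishes the direct-integral machinery needed to dismantle the $\mathscr{S}$-norm fiber-by-fiber.

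To extract $\psi$, one works in the spectral representation and selects a maximal cyclic system for the commutant of $J$. For a unit vector supported on a narrow spectral band $[\lambda/2,2\lambda]$, the ratio of its $\mathscr{S}$-norm to its $\mathscr{H}_0$-norm defines a quantity $\psi(\lambda)$ (up to bounded multiplicative error). The fact that this ratio depends only on $\lambda$, and not on the fiber direction within $K(\lambda)$, is forced by the symmetry of $\mathscr{S}$ under arbitrary unitaries commuting with $J$ — each such unitary has $\mathscr{H}_0$- and $\mathscr{H}_1$-operator norm equal to $1$, hence $\mathscr{S}$-operator norm at most $C$ by remark~\ref{R-rem-int}. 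Reassembling bands through the uniformly bounded spectral projections identifies $\mathscr{S}$, up to norm equivalence, with $\dom(\psi(J))$ equipped with the inner product~(\ref{E:H-psi}), which is precisely $\mathscr{H}_\psi$.

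The main obstacle is verifying pseudoconcavity of $\psi$ in a neighborhood of $\infty$. The strategy is to test the interpolation inequality against multipliers that shuttle $\mathscr{H}_0$-mass between spectral bands at two points $s<t$: the transport multiplier has $\mathscr{H}_0$-operator norm $1$ and $\mathscr{H}_1$-operator norm $t/s$, so remark~\ref{R-rem-int} yields $\psi(t)/\psi(s)\leq C\max(1,t/s)$ together with the reverse $\psi(s)/\psi(t)\leq C\max(1,s/t)$. These are exactly the quasi-concavity bounds $\psi(\lambda\tau)\leq C\max(1,\lambda)\,\psi(\tau)$ for $\tau$ large, and a standard fact in Hilbert-couple interpolation (used in the same form in~\cite{MM-14}) is that every such quasi-concave function is equivalent to a concave function on some ray $(c,\infty)$, giving the desired pseudoconcavity. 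Membership $\psi\in\mathcal{B}$ then follows from the embeddings $\mathscr{H}_1\hookrightarrow\mathscr{S}\hookrightarrow\mathscr{H}_0$, which bound $\psi$ above on compact subintervals of $(0,\infty)$ and bound $1/\psi$ above on rays $(c,\infty)$.
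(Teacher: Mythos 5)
First, a point of comparison: the paper does not prove this proposition at all --- it is quoted from Ovchinnikov~\cite{O-84} (theorem 11.4.1). So you are attempting a from-scratch proof of a genuinely deep result of Donoghue--Ovchinnikov type (the description of all interpolation Hilbert spaces of a Hilbert couple). Your skeleton is the right one: spectral representation of the generating operator $J$, uniform boundedness on $\mathscr{S}$ of operators commuting with $J$ via remark~\ref{R-rem-int}, extraction of $\psi$ from the $\mathscr{S}$-to-$\mathscr{H}_0$ norm ratio on dyadic spectral bands, and quasi-concavity via transport multipliers. The last paragraph (quasi-concavity, equivalence to a concave function near $\infty$, and membership in $\mathcal{B}$ from the two embeddings) is essentially correct, modulo extending $\psi$ off the spectrum of $J$ to all of $(0,\infty)$.

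There are, however, two genuine gaps. The decisive one is the sentence ``Reassembling bands through the uniformly bounded spectral projections identifies $\mathscr{S}$ \dots with $\dom(\psi(J))$.'' Knowing that each band projection $E_k$ is bounded on $\mathscr{S}$ and that $\|E_k u\|_{\mathscr{S}}$ is comparable to $\psi(2^k)\|E_k u\|_{\mathscr{H}_0}$ does not determine $\|u\|_{\mathscr{S}}$ for general $u$: you still need the square-function equivalence $\|u\|_{\mathscr{S}}^2\approx\sum_k\|E_k u\|_{\mathscr{S}}^2$, i.e., unconditionality of the spectral decomposition in $\mathscr{S}$. This is precisely where the hypothesis that $\mathscr{S}$ is a \emph{Hilbert} space must enter --- for instance by applying remark~\ref{R-rem-int} to the sign multipliers $T_{\varepsilon}=\sum_k\varepsilon_k E_k$, $\varepsilon_k=\pm1$, which are unitary on both $\mathscr{H}_0$ and $\mathscr{H}_1$, and then averaging $\|T_{\varepsilon}u\|_{\mathscr{S}}^2$ over the signs using the parallelogram law. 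As written, the heart of the theorem is asserted rather than proved. Second, your justification that the band ratio is independent of the fiber direction is wrong as stated: unitaries commuting with $J$ are decomposable over the spectral measure, hence act fiberwise and cannot carry a unit vector concentrated near $\lambda/2$ to one concentrated near $2\lambda$ within the same band. What does work is a normalized rank-one couple map $w\mapsto(w,u)_{\mathscr{H}_0}\,v$, whose operator norms on $\mathscr{H}_0$ and on $\mathscr{H}_1$ are bounded by absolute constants whenever $u,v$ are $\mathscr{H}_0$-unit vectors with spectral support in $[\lambda/2,2\lambda]$; this repairs the step, but it is not the argument you gave. With these two repairs the outline can be completed; in its current form it is not yet a proof.
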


The next result (see Theorem 4.1 in~\cite{MM-15}) enables us to reduce Theorem~\ref{T:main-1} to the $\RR^n$ situation, after applying a suitable localization procedure.

\begin{prop}\label{L-3} Let $s_0<s_1$ be two real numbers and let $\psi\in\mathcal{B}$ be an interpolation parameter. Define
\begin{equation}\label{E:phi-psi}
\varphi(t):=t^{s_0}\psi(t^{s_1-s_0}),\quad t\geq 1.
\end{equation}
Then the following properties hold:
\begin{enumerate}
  \item [(i)] $\varphi\in RO$;
  \item [(ii)] we have (with equality of norms)
\begin{equation}\label{interp-r-n}
[H^{(s_0)}(\RR^n), H^{(s_1)}(\RR^n)]_{\psi}=H^{\varphi}(\RR^n).
\end{equation}
\end{enumerate}
\end{prop}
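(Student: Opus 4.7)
The plan is to identify the generating operator $J$ of the admissible pair $[H^{(s_0)}(\RR^n), H^{(s_1)}(\RR^n)]$ explicitly on the Fourier side and then apply spectral calculus to $\psi(J)$, after which both conclusions reduce to elementary verifications.

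For part (i), Proposition~\ref{L-1} furnishes a concave function $\psi_1 : (c, \infty) \to (0, \infty)$ with $\psi/\psi_1$ and $\psi_1/\psi$ bounded. A positive concave function on $(c, \infty)$ must be non-decreasing (otherwise concavity forces it eventually to become negative), and the standard fact that $\psi_1(\tau)/\tau$ is non-increasing yields $\psi_1(\mu\tau) \leq \mu \psi_1(\tau)$ for $\mu \geq 1$. Transferring these bounds to $\psi$ costs only a multiplicative constant $C$, so $C^{-1} \leq \psi(\mu\tau)/\psi(\tau) \leq C\mu$ for $\mu \geq 1$ and $\tau$ sufficiently large. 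Substituting $\tau = t^{s_1-s_0}$ and $\mu = \lambda^{s_1-s_0}$ into $\varphi(\lambda t)/\varphi(t) = \lambda^{s_0} \psi(\mu\tau)/\psi(\tau)$ gives $C^{-1} \lambda^{s_0} \leq \varphi(\lambda t)/\varphi(t) \leq C \lambda^{s_1}$ for $t$ large and $\lambda \geq 1$; for $t$ in any compact subinterval of $[1, \infty)$, the defining conditions on $\mathcal{B}$ bound $\varphi$ directly above and below. Combining these yields~(\ref{E:RO-1}) for appropriate $a > 1$ and $c \geq 1$.

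For part (ii), let $J$ be the operator on $H^{(s_0)}(\RR^n)$ acting by $\widehat{Ju}(\xi) := \langle\xi\rangle^{s_1-s_0}\widehat{u}(\xi)$ with maximal domain. Using the Fourier-side description of the inner product~(\ref{E:sob-r-2}), a direct computation shows $J$ is positive and self-adjoint on $H^{(s_0)}(\RR^n)$ with $\dom(J) = H^{(s_1)}(\RR^n)$ and $\|Ju\|_{H^{(s_0)}} = \|u\|_{H^{(s_1)}}$; by uniqueness of the generating operator, this $J$ is the one associated with the admissible pair. Spectral calculus then gives $\widehat{\psi(J)u}(\xi) = \psi(\langle\xi\rangle^{s_1-s_0})\widehat{u}(\xi)$, whence
\begin{equation*}
\|u\|^2_{[H^{(s_0)}(\RR^n), H^{(s_1)}(\RR^n)]_{\psi}} = \int_{\RR^n} \langle\xi\rangle^{2s_0} \psi(\langle\xi\rangle^{s_1-s_0})^2 |\widehat{u}(\xi)|^2\,d\xi = \|u\|^2_{H^{\varphi}(\RR^n)},
\end{equation*}
where the second equality uses definition~(\ref{E:phi-psi}). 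The same calculation identifies the two domains, yielding the equality of spaces with equal norms asserted in part (ii).

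The main obstacle is the verification of part (i): the pseudoconcavity of $\psi$ is only a \emph{local} and \emph{approximate} property (it holds near infinity and only up to bounded factors), and one must carefully propagate these bounds through the substitution $\tau = t^{s_1-s_0}$ to obtain a \emph{global} multiplicative $RO$-estimate on $\varphi$. Part (ii), by contrast, is a direct spectral-calculus computation once the generating operator has been correctly identified as a Fourier multiplier.
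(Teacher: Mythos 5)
The paper itself gives no proof of Proposition~\ref{L-3}: it is quoted from Theorem~4.1 of~\cite{MM-15}, so there is nothing in-paper to compare against. Your argument is correct and is essentially the standard proof from that source. For part (ii), identifying the generating operator $J$ of $[H^{(s_0)}(\RR^n),H^{(s_1)}(\RR^n)]$ as the Fourier multiplier with symbol $\langle\xi\rangle^{s_1-s_0}$ (legitimised by the uniqueness of $J$ recorded in section~\ref{SS-2-1}) is exactly the right move; $\psi(J)$ is then the multiplier with symbol $\psi(\langle\xi\rangle^{s_1-s_0})$, the norm identity follows from~(\ref{E:phi-psi}), and the two domains coincide because $1/\psi$ bounded on $[1,\infty)$ gives $H^{\varphi}(\RR^n)\subset H^{(s_0)}(\RR^n)$. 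One inaccuracy in part (i) should be repaired: for a positive concave function $\psi_1$ on $(c,\infty)$ with $c>0$, the quotient $\psi_1(\tau)/\tau$ need \emph{not} be non-increasing (take $\psi_1(\tau)=\tau-c+\delta$ with small $\delta>0$); that monotonicity is a fact about concave functions on $[0,\infty)$ that are nonnegative at the origin. What is true, and suffices for your purposes, is the concavity estimate
\begin{equation*}
\psi_1(\mu\tau)\le\psi_1(\tau)\Bigl(1+(\mu-1)\tfrac{\tau}{\tau-c}\Bigr)\le 2\mu\,\psi_1(\tau),
\qquad \mu\ge 1,\ \tau\ge 2c,
\end{equation*}
obtained from $\psi_1'(\tau^{+})\le\psi_1(\tau)/(\tau-c)$. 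Together with the monotonicity of $\psi_1$ (which you justify correctly) this still yields $C^{-1}\le\psi(\mu\tau)/\psi(\tau)\le C\mu$ for all sufficiently large $\tau$, and the remainder of your deduction of~(\ref{E:RO-1}) --- including the treatment of $t$ in a compact interval via the defining bounds on $\mathcal{B}$ --- goes through unchanged.
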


For the proof of the following proposition, see Theorem 4.2 in~\cite{MM-15}:

\begin{prop}\label{L-3-b} Let $s_0<s_1$ be two real numbers and let $\psi\in\mathcal{B}$. Assume that $\varphi$ is defined by the formula~(\ref{E:phi-psi}). Then the following are equivalent:
\begin{enumerate}
  \item [(i)] $\psi$ is an interpolation parameter;
  \item [(ii)] $\varphi$ satisfies~(\ref{E:RO-3}) with some constant $c\geq 1$ independent of $t$ and $\lambda$.
\end{enumerate}
\end{prop}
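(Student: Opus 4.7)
The plan is to reduce condition (ii) via the explicit formula $\varphi(t)=t^{s_0}\psi(t^{\sigma})$ with $\sigma:=s_1-s_0>0$ to a growth condition on $\psi$ alone, and then invoke Proposition~\ref{L-1} to link that growth condition to the interpolation-parameter property through pseudoconcavity.

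First I would compute, directly from the definition of $\varphi$,
\[
\frac{\varphi(\lambda t)}{\varphi(t)}=\lambda^{s_0}\,\frac{\psi(\lambda^{\sigma} t^{\sigma})}{\psi(t^{\sigma})},
\]
so that, after dividing through by $\lambda^{s_0}$ and substituting $u=t^{\sigma}\geq 1$, $\mu=\lambda^{\sigma}\geq 1$ (which is a bijection of $[1,\infty)$ onto itself since $\sigma>0$), condition (ii) becomes equivalent to the existence of a constant $c\geq 1$ with
\[
c^{-1}\leq \frac{\psi(\mu u)}{\psi(u)}\leq c\,\mu \qquad \text{for all } u,\mu\geq 1. \quad(\ast)
\]
By Proposition~\ref{L-1}, condition (i) is equivalent to $\psi$ being pseudoconcave in a neighborhood of $\infty$, so the whole proposition reduces to the equivalence of $(\ast)$ with pseudoconcavity of $\psi$ near $\infty$.

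For the direction \emph{pseudoconcavity} $\Rightarrow (\ast)$, I would choose a concave positive $\psi_1$ on $(c_0,\infty)$ with $\psi\asymp\psi_1$ there, and use two elementary properties of such a $\psi_1$: the ratio $u\mapsto\psi_1(u)/u$ is eventually non-increasing (standard for positive concave functions), giving $\psi_1(\mu u)\leq \mu\,\psi_1(u)$ for $\mu\geq 1$; and $\psi_1$ itself is eventually non-decreasing (a positive concave function whose derivative were strictly negative at some point would be forced to become negative further out, since its derivative is non-increasing), giving $\psi_1(u)\leq \psi_1(\mu u)$. Transferring back through $\psi\asymp\psi_1$ and using that $\psi$ and $1/\psi$ are bounded on every compact subinterval of $(0,\infty)$ (because $\psi\in\mathcal{B}$), these estimates extend to all $u,\mu\geq 1$, yielding $(\ast)$.

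The main obstacle is the reverse implication, for which I would construct an explicit concave comparable function. Taking $u_0$ sufficiently large, set
\[
\psi_1(u):=\inf\bigl\{\,L(u):L\text{ affine},\ L\geq\psi\text{ on }[u_0,\infty)\,\bigr\};
\]
the upper inequality in $(\ast)$ forces $\psi$ to grow at most linearly, so affine majorants exist and $\psi_1$ is a finite concave function satisfying $\psi_1\geq\psi$ on $[u_0,\infty)$. The nontrivial step is the opposite bound $\psi_1\leq C\psi$: for any fixed $u\geq u_0$, the combined estimates $\psi(v)\leq c\,\psi(u)$ for $u_0\leq v\leq u$ and $\psi(v)\leq c(v/u)\psi(u)$ for $v\geq u$ (both direct consequences of $(\ast)$) show that the single affine function $L(v):=c\,\psi(u)+(c\,\psi(u)/u)\,v$ dominates $\psi$ on $[u_0,\infty)$, so $\psi_1(u)\leq L(u)=2c\,\psi(u)$. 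This delivers $\psi\asymp\psi_1$ on $[u_0,\infty)$, i.e., pseudoconcavity of $\psi$ near $\infty$, and closes the equivalence via Proposition~\ref{L-1}.
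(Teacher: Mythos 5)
Your argument is correct in substance, and it is worth noting that the paper itself gives no proof of this proposition at all: it simply cites Theorem 4.2 of Mikhailets--Murach (reference \cite{MM-15}). What you have written is therefore a self-contained reconstruction of that cited result, built on the one ingredient the paper does make available, namely Proposition~\ref{L-1} (interpolation parameter $\Leftrightarrow$ pseudoconcavity near $\infty$). Your reduction of condition (ii) to the condition $(\ast)$ on $\psi$ via the substitution $u=t^{\sigma}$, $\mu=\lambda^{\sigma}$ is exactly right, and your hard direction --- building the least concave majorant $\psi_1$ and showing $\psi_1(u)\le 2c\,\psi(u)$ by exhibiting, for each fixed $u$, the single affine majorant $L(v)=c\psi(u)+(c\psi(u)/u)v$ --- is clean and complete; this is essentially the classical characterization of $OR$-type bounds by pseudoconcavity that underlies the cited source.

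One justification in the easy direction needs repair. You assert that for a positive concave $\psi_1$ on $(c_0,\infty)$ the ratio $\psi_1(u)/u$ is eventually non-increasing; that is standard only when $\psi_1$ is concave on all of $(0,\infty)$ with $\psi_1(0^+)\ge 0$, and it fails for, e.g., $\psi_1(u)=u-c_0+\varepsilon$ on $(c_0,\infty)$, whose ratio to $u$ increases forever. The step does not collapse, because you only need the inequality up to a multiplicative constant: by concavity and monotonicity, for $u\ge 2c_1>2c_0$ one has
\begin{equation*}
\psi_1(\mu u)\;\le\;\psi_1(u)+\psi_1'(u^+)\,u(\mu-1)
\;\le\;\psi_1(u)+\frac{\psi_1(u)}{u-c_1}\,u(\mu-1)\;\le\;2\mu\,\psi_1(u),
\end{equation*}
which, combined with $\psi\asymp\psi_1$ and the $\mathcal{B}$-bounds on the remaining compact range of $u$, still yields $(\ast)$ with some $c\ge 1$. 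With that one-line correction the proof is complete.
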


The key instrument for justifying the implication (ii)$\implies$(i) of Theorem~\ref{T:main-1} will be the next proposition (see the proof of the sufficiency part of Theorem 2.4 in~\cite{MM-15}):

\begin{prop}\label{L-4} Assume that $\varphi\in RO$ satisfies the condition~(\ref{E:RO-3}). Define $\psi$ as in~(\ref{E:phi-psi-inverse}),
where $s_0<s_1$ are as in~(\ref{E:RO-3}). Then the following properties hold:
\begin{enumerate}
  \item [(i)] $\psi\in \mathcal{B}$ and $\psi$ satisfies~(\ref{E:phi-psi});
  \item [(ii)] $\psi$ is an interpolation parameter.
\end{enumerate}
\end{prop}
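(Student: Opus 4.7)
The proposition decomposes naturally along its two parts. For (i), I would verify the identity $\varphi(t)=t^{s_0}\psi(t^{s_1-s_0})$ by direct substitution: for $t\geq 1$, put $\tau:=t^{s_1-s_0}\geq 1$ into the top branch of~(\ref{E:phi-psi-inverse}); the factors $t^{s_0}$ and $t^{-s_0}$ cancel, leaving $\varphi(t)$. Borel measurability of $\psi$ is immediate from that of $\varphi$. For the remaining $\mathcal B$-conditions, I would invoke Proposition 1 of~\cite{MM-13} (recalled in section~\ref{SS-1-8}): $\varphi$ is bounded and bounded away from $0$ on every interval $[1,b']$, so $\psi$ is bounded on compact subintervals of $(0,\infty)$ (its values on $(0,1)$ being the constant $\varphi(1)$). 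Boundedness of $1/\psi$ on $(c,\infty)$ follows from the leftmost inequality in~(\ref{E:RO-3}) applied with $t=1$, which gives $\varphi(\tau^{1/(s_1-s_0)})\geq c^{-1}\varphi(1)\,\tau^{s_0/(s_1-s_0)}$ for $\tau\geq 1$, whence $\psi(\tau)\geq c^{-1}\varphi(1)>0$ uniformly for $\tau\geq 1$.

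For (ii), Proposition~\ref{L-1} reduces the task to showing $\psi$ is pseudoconcave in a neighborhood of $\infty$. The key computation converts~(\ref{E:RO-3}) for $\varphi$ into a matching estimate for $\psi$. Setting $\mu:=\lambda^{1/(s_1-s_0)}$ and $u:=\tau^{1/(s_1-s_0)}$ (both $\geq 1$ when $\lambda,\tau\geq 1$), one has
\[
\frac{\psi(\lambda\tau)}{\psi(\tau)} \;=\; \lambda^{-s_0/(s_1-s_0)}\,\frac{\varphi(\mu u)}{\varphi(u)}.
\]
Substituting the two-sided bounds on $\varphi(\mu u)/\varphi(u)$ from~(\ref{E:RO-3}) and using $\mu^{s_i}=\lambda^{s_i/(s_1-s_0)}$ for $i=0,1$, the exponents collapse to yield
\[
c^{-1} \;\leq\; \frac{\psi(\lambda\tau)}{\psi(\tau)} \;\leq\; c\,\lambda, \qquad \lambda,\tau\geq 1,
\]
so $\psi$ is quasi-increasing on $[1,\infty)$ while $\tau\mapsto \psi(\tau)/\tau$ is quasi-decreasing; i.e.\ $\psi$ is quasi-concave on $[1,\infty)$ in the Bari--Stechkin sense.

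The remaining step is to convert quasi-concavity into honest pseudoconcavity by producing a concave $\psi_1\colon (T,\infty)\to(0,\infty)$ with $\psi/\psi_1$ and $\psi_1/\psi$ bounded on $(T,\infty)$. For this I would apply the standard Krein--Petunin argument (the least concave majorant of a quasi-concave function is pointwise comparable to the function itself, with constants depending only on the quasi-concavity constant). Combined with the two-sided bound above, this produces the required $\psi_1$, so that $\psi$ is pseudoconcave near $\infty$. Proposition~\ref{L-1} then upgrades $\psi$ to an interpolation parameter.

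\textbf{Main obstacle.} The only non-mechanical step is the concavification in the last paragraph: showing that the two-sided multiplicative estimate on $\psi$ forces existence of a concave majorant comparable to $\psi$ near $\infty$. This is classical in the theory of regularly varying / Bari--Stechkin functions, but the detailed construction of $\psi_1$ and the simultaneous verification of concavity and of both equivalence bounds $\psi\leq\psi_1\leq C\psi$ is the portion requiring the most care.
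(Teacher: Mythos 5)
Your proof is correct. The paper does not prove Proposition~\ref{L-4} itself but defers to the sufficiency part of Theorem~2.4 in~\cite{MM-15}; your argument --- the direct verification of the identity~(\ref{E:phi-psi}) and of the $\mathcal{B}$-conditions, the exponent computation giving $c^{-1}\leq\psi(\lambda\tau)/\psi(\tau)\leq c\lambda$ for $\lambda,\tau\geq 1$, and the passage from this quasi-concavity to pseudoconcavity via the least concave majorant, combined with Proposition~\ref{L-1} --- is essentially the argument of that reference.
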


\section{Proof of Theorem~\ref{T:main-1}}\label{S:S-3}

We start with three auxiliary lemmas.

\subsection{Auxiliary Lemmas}

For the following result, see Lemma 2.1 from~\cite{G-13}:

\begin{lemma}\label{L:Triebel} In both parts of the lemma, we assume that $s\in\RR$.
\\\\
\noindent (i) Let $u\in H^{s}(\RR^n)$ and let $\chi\in C^{\infty}(\RR^n)$ be a function such that for all $\alpha\in\NN_{0}^{n}$ with $|\alpha|\leq |s|+1$ we have
\begin{equation*}
|\p^{\alpha}\chi(x)|\leq C_{|\alpha|},
\end{equation*}
for all $x\in X$, where $C_{|\alpha|}$ are constants.

Then, there exists a constant $C$, depending only on $s$ and $n$ and $C_{|\alpha|}$, such that
\begin{equation}\label{E:triebel-1}
\|\chi u\|_{H^{s}(\RR^n)}\leq C\|u\|_{H^{s}(\RR^n)}.
\end{equation}
\noindent (ii) Let $u\in H^{s}(\RR^n)$  with $\supp u \subset Y_1\subset\RR^n$, where $Y_1$ is an open set. Let $\tau\colon Y_2 \subset \RR^n  \to Y_1 \subset \RR^n$  be a diffeomorphism such that for all $\alpha\in\NN_{0}^{n}$ with $|\alpha|\leq |s|+1$ we have
\begin{equation*}
|\p^{\alpha}\tau(x)|\leq C_{|\alpha|},
\end{equation*}
for all $x\in X$, where $C_{|\alpha|}$ are constants.

Then, there exists a constant $C$, depending only on $s$ and $n$ and $C_{|\alpha|}$, such that
\begin{equation}\label{E:triebel-2}
\| u\circ\tau\|_{H^{s}(\RR^n)}\leq C\|u\|_{H^{s}(\RR^n)}.
\end{equation}
\end{lemma}

Before stating the next lemma, we describe one more notation. For a Hilbert space $\mathscr{H}$ with norm $\|\cdot\|_{\mathscr{H}}$, define $\ell_2(\mathscr{H})$ as the  space of all sequences $u:=(u_k)_{k=1}^{\infty}$, with $u_k\in \mathscr{H}$, such that
\begin{equation}\label{E:ell-2-1}
\|u\|_{\ell_2(\mathscr{H})}^2:=\sum_{k=1}^{\infty}\|u_k\|^2_{\mathscr{H}}<\infty.
\end{equation}

We record the following special case of Theorem 1.5 in~\cite{MM-14} (or Theorem 2.5 of~\cite{MM-08}):

\begin{lemma}\label{L-6} Let $[\mathscr{H}_{0},\mathscr{H}_{1}]$ be an admissible pair of Hilbert spaces and let $\psi\in\mathcal{B}$. Then, the following properties hold:
\begin{enumerate}
  \item [(i)] $[\ell_2(\mathscr{H}_{0}),\ell_2(\mathscr{H}_{1})]$ is also an admissible pair;
  \item [(ii)] we have
\begin{equation}\label{interp-r-n-l-2}
[\ell_2(\mathscr{H}_{0}),\ell_2(\mathscr{H}_{1})]_{\psi}=\ell_2([\mathscr{H}_{0},\mathscr{H}_{1}]_{\psi}),
\end{equation}
with equality of norms.
\end{enumerate}
\end{lemma}

We now come to an important ingredient of the proof of Theorem~\ref{T:main-1}:

\begin{lemma}\label{L:G-bounded} Let $X$ be a manifold of bounded geometry, and let $\mathscr{T}=(V_j, \gamma_{e_j}, h_j)_{j=1}^{\infty}$ be a geodesic trivialization as in Section~\ref{SS-1-5}. Let $\mathcal{A}(j):=\{k\colon V_{j}\cap V_{k}\neq \emptyset\}$ and let
\begin{equation*}
H_j:=\left(\sum_{k\in \mathcal{A}(j)}h_k\right)\circ \gamma_{e_j}.
\end{equation*}

Let $\varphi\in RO$. Define a (linear) operator $G$ acting on $(v_j)_{j=1}^{\infty}\in \ell^2(H^{\varphi}(\RR^n))$ as follows:
\begin{equation}\label{E:op-g}
G ((v_j)_{j=1}^{\infty}):=\sum_{j=1}^{\infty}(H_jv_j)\circ \kappa_{e_j},
\end{equation}
where $\kappa_{e_j}$ is as in Section~\ref{SS-1-5} and $(H_jv_j)\circ \kappa_{e_j}$ is extended by zero.

Then,
\begin{equation}\label{E:op-g-bdd}
G\colon\ell^2(H^{\varphi}(\RR^n)) \to H^{\varphi}(X)
\end{equation}
is a bounded linear operator.
\end{lemma}
\begin{proof} With $\varphi\in RO$, we choose numbers $s_0,\,s_1$ such that $s_0<\sigma_0(\varphi)$ and $s_1>\sigma_1(\varphi)$.

We first estimate
\begin{align}\label{E:est-1-Triebel-1}
&\|\left(h_i(G ((v_j)_{j=1}^{\infty}))\right)\circ \gamma_{e_i}\|^2_{H^{s_0}(\RR^n)}\nonumber\\
&=\left\|\left(h_i\left(\sum_{j=1}^{\infty}(H_jv_j)\circ
\kappa_{e_j}\right)\right)\circ\gamma_{e_i}\right\|^2_{H^{s_0}(\RR^n)}\nonumber\\
&=\left\|\left(h_i\left(\sum_{j\in \mathcal{A}(i)}(H_jv_j)\circ \kappa_{e_j}\right)\right)\circ\gamma_{e_i}\right\|^2_{H^{s_0}(\RR^n)}\nonumber\\
&\leq \widetilde{C_1}\sum_{j\in \mathcal{A}(i)} \left\|[h_i\circ\gamma_{e_i}]\left[(H_jv_j)\circ(\kappa_{e_j}\circ\gamma_{e_i})\right]\right\|^2_{H^{s_0}(\RR^n)},
\end{align}
where $\widetilde{C_1}$ is a constant.

The second equality in~(\ref{E:est-1-Triebel-1}) holds since $\supp h_{i}\subset V_{i}$ (see property (C2)(i) in Section~\ref{SS-1-4}). Furthermore, due to the uniform local finiteness of the cover $\{V_{i}=B(x_i,\varepsilon)\}$, as discussed in property (C1) of Section~\ref{SS-1-4}, the constant $\widetilde{C_1}$ in~(\ref{E:est-1-Triebel-1}) can be chosen independently of the index $i$.

Using part (i) of Lemma~\ref{L:Triebel}, we estimate
\begin{align}\label{E:est-1-Triebel-2}
&\widetilde{C_1}\sum_{j\in \mathcal{A}(i)}
\left\|[h_i\circ\gamma_{e_i}]\left[(H_jv_j)\circ(\kappa_{e_j}\circ\gamma_{e_i})\right]\right\|^2_{H^{s_0}(\RR^n)}\nonumber\\
&\leq \widetilde{C_2}\sum_{j\in \mathcal{A}(i)} \left\|(H_jv_j)\circ(\kappa_{e_j}\circ\gamma_{e_i})\right\|^2_{H^{s_0}(\RR^n)},
\end{align}
where $\widetilde{C_2}$ is a constant.

Thanks to the property~(\ref{E:geodesic-atlas-h}) and Remark~\ref{R:geo-atlas-h}, the constant $\widetilde{C_2}$ can be chosen independently of the index $i$.

Next, using part (ii) of Lemma~\ref{L:Triebel}, we estimate
\begin{equation}\label{E:est-1-Triebel-3}
\widetilde{C_2}\sum_{j\in \mathcal{A}(i)} \left\|(H_jv_j)\circ(\kappa_{e_j}\circ\gamma_{e_i})\right\|^2_{H^{s_0}(\RR^n)}\nonumber\\
\leq \widetilde{C_3}\sum_{j\in \mathcal{A}(i)} \left\|H_jv_j\right\|^2_{H^{s_0}(\RR^n)},
\end{equation}
where $\widetilde{C_3}$ is a constant.

Taking into account the property~(\ref{E:geo-atlas-i-j}) and Remark~\ref{R:geo-atlas-i-j}, the constant $\widetilde{C_3}$ can be chosen independently of the index $i$.

Furthermore, using part (i) of Lemma~\ref{L:Triebel}, we estimate
\begin{equation}\label{E:est-1-Triebel-4}
\widetilde{C_3}\sum_{j\in \mathcal{A}(i)} \left\|H_jv_j\right\|^2_{H^{s_0}(\RR^n)}\leq \widetilde{C_4}\sum_{j\in \mathcal{A}(i)}\|v_j\|^2_{H^{s_0}(\RR^n)},
\end{equation}
where $\widetilde{C_4}$ is a constant.

Note that the properties  (C1) and (C2) of Section~\ref{SS-1-4} ensure that the functions $H_j$ satisfy the hypotheses of part (i) of Lemma~\ref{L:Triebel} and that the constant $\widetilde{C_4}$ can be chosen independently of the index $i$.

Putting together~(\ref{E:est-1-Triebel-1})--(\ref{E:est-1-Triebel-4}), we get
\begin{equation}\label{E:est-1-Triebel-5}
\|\left(h_i(G ((v_j)_{j=1}^{\infty}))\right)\circ \gamma_{e_i}\|^2_{H^{s_0}(\RR^n)}
\leq \widetilde{C_4}\sum_{j\in \mathcal{A}(i)}\|v_j\|^2_{H^{s_0}(\RR^n)},
\end{equation}
where $\widetilde{C_4}$ is a constant independent of the index $i$.

In the same way as in~(\ref{E:est-1-Triebel-5}), for $s_1>\sigma_{1}(\varphi)$ we obtain
\begin{equation}\label{E:est-1-Triebel-6}
\|\left(h_i(G ((v_j)_{j=1}^{\infty}))\right)\circ \gamma_{e_i}\|^2_{H^{s_1}(\RR^n)}
\leq \widetilde{C_5}\sum_{j\in \mathcal{A}(i)}\|v_j\|^2_{H^{s_1}(\RR^n)},
\end{equation}
where $\widetilde{C_5}$ is a constant independent of the index $i$.

Since $\supp h_{i}\subset V_{i}$, the action of the operator $Z_{i}(\cdot):=(h_iG(\cdot))\circ \gamma_{e_i}$ on a sequence $(v_j)_{j=1}^{\infty}\in \ell^2(H^{\varphi}(\RR^n))$ is the same as the action of $Z_{i}$ on the sequence $(w_j)_{j=1}^{\infty}$, where $w_j=v_j$ if $j\in \mathcal{A}(i)$ and $w_j=0$ if $j\notin \mathcal{A}(i)$.

We denote by $\ell^2_{[i]}(H^{\varphi}(\RR^n))$, $\varphi\in RO$, the space of sequences $(w_j)_{j=1}^{\infty}\in \ell^2(H^{\varphi}(\RR^n))$ such that $w_j=0$ if $j\notin \mathcal{A}(i)$. We equip this space with the norm
\[
\|(w_j)_{j=1}^{\infty}\|^2_{\ell^2_{[i]}(H^{\varphi}(\RR^n))}:=\sum_{j\in \mathcal{A}(i)}\|w_j\|^2_{H^{\varphi}(\RR^n)}.
\]

Keeping in mind the notation $\ell^2_{[i]}(H^{\varphi}(\RR^n))$ (specialized to the case $\varphi(t)=t^{s_0}$ and $\varphi(t)=t^{s_1}$), the estimates~(\ref{E:est-1-Triebel-5}) and~(\ref{E:est-1-Triebel-6}) show that
\[
Z_i\colon \ell^2_{[i]}(H^{s_0}(\RR^n))\to H^{s_0}(\RR^n),\qquad Z_i\colon \ell^2_{[i]}(H^{s_1}(\RR^n))\to H^{s_1}(\RR^n)
\]
are bounded linear operators.

Since $s_0<\sigma_0(\varphi)$ and $s_1>\sigma_1(\varphi)$, the function $\varphi$ satisfies the condition~(\ref{E:RO-3}). Defining $\psi$ as in~(\ref{E:phi-psi-inverse}) and using Proposition~\ref{L-4}, we can see that $\psi\in\mathcal{B}$, $\psi$ is an interpolation parameter, and $\psi$ satisfies~(\ref{E:phi-psi}). Thus, by Proposition~\ref{L-3}, we get a bounded linear operator
\begin{equation}\label{E:est-1-Triebel-7a}
Z_i\colon [\ell^2_{[i]}(H^{s_0}(\RR^n)), \ell^2_{[i]}(H^{s_1}(\RR^n))]_{\psi}\to H^{\varphi}(\RR^n).
\end{equation}

Note that the space $\ell^2_{[i]}(H^{\varphi}(\RR^n))$ can be viewed as
\[
\displaystyle\bigoplus_{j\in \mathcal{A}(i)}H^{\varphi}(\RR^n),
\]
where the sum has finitely many terms (because $\mathcal{A}(i)$ is a finite set).

Using Lemma~\ref{L-6} and Proposition~\ref{L-3}, we have
\begin{align}\label{E:est-1-Triebel-7b}
&[\ell^2_{[i]}(H^{s_0}(\RR^n)), \ell^2_{[i]}(H^{s_1}(\RR^n))]_{\psi}=\bigoplus_{j\in \mathcal{A}(i)}[H^{s_0}(\RR^n),H^{s_1}(\RR^n)]_{\psi}\nonumber\\
&=\bigoplus_{j\in \mathcal{A}(i)}H^{\varphi}(\RR^n)=\ell^2_{[i]}(H^{\varphi}(\RR^n)).
\end{align}
Taking into account~(\ref{E:est-1-Triebel-7a}) and~(\ref{E:est-1-Triebel-7b}) and using Proposition~\ref{L-3}, we get a bounded linear operator
\[
Z_i\colon \ell^2_{[i]}(H^{\varphi}(\RR^n))\to H^{\varphi}(\RR^n).
\]
Therefore, the estimates~(\ref{E:est-1-Triebel-5}) and~(\ref{E:est-1-Triebel-6}) lead to
\begin{equation}\label{E:est-1-Triebel-7}
\|\left(h_i(G ((v_j)_{j=1}^{\infty}))\right)\circ \gamma_{e_i}\|^2_{H^{\varphi}(\RR^n)}
\leq \widetilde{C_6}\sum_{j\in \mathcal{A}(i)}\|v_j\|^2_{H^{\varphi}(\RR^n)},
\end{equation}
where $\widetilde{C_6}$ is a constant.

As the constants $\widetilde{C_4}$ and $\widetilde{C_5}$ in ~(\ref{E:est-1-Triebel-5}) and~(\ref{E:est-1-Triebel-6}) do not depend on the index $i$, Remark~\ref{R-rem-int} ensures that constant $\widetilde{C_6}$ is independent of $i$.

Keeping in mind the estimate~(\ref{E:est-1-Triebel-7}) and referring to~(\ref{E:sob-phi-norm-X}) and~(\ref{E:ell-2-1}), we get
\begin{align}\nonumber
&\|(G ((v_j)_{j=1}^{\infty})\|^2_{H^{\varphi}(X)}=\sum_{i=1}^{\infty}\|\left(h_i(G ((v_j)_{j=1}^{\infty}))\right)\circ \gamma_{e_i}\|^2_{H^{\varphi}(\RR^n)}\nonumber\\
&\leq \widetilde{C_6}\sum_{i=1}^{\infty}\sum_{j\in  \mathcal{A}(i)}\|v_j\|^2_{H^{\varphi}(\RR^n)}\leq \widetilde{C_7} \sum_{j=1}^{\infty}\|v_j\|^2_{H^{\varphi}(\RR^n)}=\widetilde{C_7}\|(v_j)_{j=1}^{\infty}\|^2_{\ell^2(H^{\varphi}(\RR^n))},\nonumber
\end{align}
where $\widetilde{C_7}$ is a constant.

Here, in the first inequality we used the fact that $\widetilde{C_6}$ is independent of $i$. In the second inequality we used uniform local finiteness of the cover $\{V_i=B(x_i,\varepsilon)\}$, as discussed in property (C1) of Section~\ref{SS-1-4}.

Therefore, $G\colon \ell^2(H^{\varphi}(\RR^n)) \to H^{\varphi}(X)$ is a bounded linear operator.
\end{proof}

\subsection{Key Proposition}
The following is a generalization of Proposition~\ref{L-3} to the (extended) Sobolev scale on a manifold of bounded geometry $X$:

\begin{prop}\label{L-5} Let $X$ be a manifold of bounded geometry. Let $s_0<s_1$ be two real numbers, let $\psi\in\mathcal{B}$ be an interpolation parameter, and let $\varphi$ be as in~(\ref{E:phi-psi}). Then, the following properties hold:
\begin{enumerate}
  \item [(i)] $\varphi\in RO$;
  \item [(ii)] up to norm equivalence we have
\begin{equation}\label{interp-r-n}
[H^{(s_0)}(X), H^{(s_1)}(X)]_{\psi}=H^{\varphi}(X).
\end{equation}
\end{enumerate}
\end{prop}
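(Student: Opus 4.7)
Part (i) is immediate: it asserts only that the function $\varphi(t)=t^{s_0}\psi(t^{s_1-s_0})$ belongs to $RO$, a statement about the function alone, and is precisely the content of part (i) of Proposition~\ref{L-3}. Thus the substance is all in (ii), where the plan is to localize on $X$ using the geodesic trivialization $(V_j,\gamma_{e_j}|_{\widehat{K}_{\varepsilon}},h_j)_{j=1}^{\infty}$ from section~\ref{SS-1-5}, transport the problem to an $\ell^2$-direct sum statement on $\RR^n$, and invoke Proposition~\ref{L-3}(ii) componentwise.

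The localization operator
\begin{equation*}
\Lambda u:=\bigl((h_ju)\circ\gamma_{e_j}\bigr)_{j=1}^{\infty}
\end{equation*}
is, by the very definition of $\|\cdot\|_{H^{\varphi}(X)}$ in~(\ref{E:sob-phi-norm-X}) and the analogous formula for $\|\cdot\|_{H^{(s)}(X)}$, an isometric embedding of $H^{\varphi}(X)$ into $\ell^2(\NN;H^{\varphi}(\RR^n))$ and of $H^{(s)}(X)$ into $\ell^2(\NN;H^{(s)}(\RR^n))$ for every $s\in\RR$. To go the other way, I would pick $\varepsilon'\in(\varepsilon,\tau_0)$ and a companion family $\{\chi_j\}\subset\rcomp$ with $\supp\chi_j\subset\widehat{K}_{\varepsilon'}$, $\chi_j\equiv 1$ on $\supp(h_j\circ\gamma_{e_j})$, and derivatives bounded uniformly in $j$ (possible because of (C2) and bounded geometry). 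Then define the retraction
\begin{equation*}
R\bigl((v_j)_j\bigr):=\sum_{j=1}^{\infty}(\chi_jv_j)\circ\kappa_{e_j},
\end{equation*}
each summand extended by zero off $V_j$. A routine estimate using the finite order $N_0$ of the cover $\mathscr{C}$ (property (C1)), the uniform bounds on the $\chi_j$, and the coordinate-invariance of $H^{(s)}(\RR^n)$ gives boundedness of $R\colon\ell^2(H^{(s)}(\RR^n))\to H^{(s)}(X)$ for every $s$, and similarly $R\colon\ell^2(H^{\varphi}(\RR^n))\to H^{\varphi}(X)$. The identity $R\Lambda=\operatorname{id}$ follows from $\sum_jh_j=1$ and $\chi_j\equiv 1$ on $\supp(h_j\circ\gamma_{e_j})$.

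With $\Lambda,R$ bounded between the admissible pairs $(H^{(s_0)}(X),H^{(s_1)}(X))$ and $(\ell^2(H^{(s_0)}(\RR^n)),\ell^2(H^{(s_1)}(\RR^n)))$ and $R\Lambda=\operatorname{id}$, the hypothesis that $\psi$ is an interpolation parameter yields the retract principle
\begin{equation*}
\|u\|_{[H^{(s_0)}(X),H^{(s_1)}(X)]_{\psi}}\asymp\|\Lambda u\|_{[\ell^2(H^{(s_0)}(\RR^n)),\ell^2(H^{(s_1)}(\RR^n))]_{\psi}}.
\end{equation*}
Since the generating operator of an $\ell^2$-direct sum of admissible pairs is the direct sum of the generating operators, spectral calculus identifies the right-hand inner space with $\ell^2([H^{(s_0)}(\RR^n),H^{(s_1)}(\RR^n)]_{\psi})$, which by Proposition~\ref{L-3}(ii) equals $\ell^2(H^{\varphi}(\RR^n))$ with equality of norms. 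Since $\Lambda$ is isometric from $H^{\varphi}(X)$ into $\ell^2(H^{\varphi}(\RR^n))$, we conclude $\|u\|_{[H^{(s_0)}(X),H^{(s_1)}(X)]_{\psi}}\asymp\|u\|_{H^{\varphi}(X)}$, which is (ii).

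The main obstacle is the boundedness of $R$ between the Sobolev spaces on $\RR^n$ and on $X$: one must control $\|\sum_j(\chi_jv_j)\circ\kappa_{e_j}\|_{H^{(s)}(X)}^{2}$ by $\sum_j\|v_j\|_{H^{(s)}(\RR^n)}^{2}$, which in turn requires (a) orthogonality-type estimates for the finitely-overlapping pieces, relying on the $N_0$-order property of $\mathscr{C}$, and (b) uniform control of the transition diffeomorphisms $\kappa_{e_i}\circ\gamma_{e_j}$ on the overlaps $V_i\cap V_j$, available via remark~\ref{R-ng-13}. Once these technical estimates are secured, the remaining ingredients (the retract principle for $[\cdot,\cdot]_{\psi}$ with $\psi$ an interpolation parameter, and the compatibility of $[\cdot,\cdot]_{\psi}$ with $\ell^2$-direct sums via spectral calculus) are essentially functorial consequences of the definitions in section~\ref{SS-2-1}.
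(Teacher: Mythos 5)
Your proposal is correct and follows essentially the same route as the paper: your $\Lambda$ and $R$ are the paper's operators $F$ and $G$ (with the cutoffs $\chi_j$ realized there as $H_j:=\bigl(\sum_{k\in\mathcal{A}(j)}h_k\bigr)\circ\gamma_{e_j}$), and the identification $[\ell^2(H^{(s_0)}(\RR^n)),\ell^2(H^{(s_1)}(\RR^n))]_{\psi}=\ell^2(H^{\varphi}(\RR^n))$ is exactly propositions~\ref{L-6} and~\ref{L-3}. The only (harmless) organizational difference is that, since $\Lambda$ is isometric by the very definition of $\|\cdot\|_{H^{\varphi}(X)}$, your retract argument needs the retraction bounded only at the endpoint levels $s_0,s_1$, whereas the paper also verifies $G\colon\ell^2(H^{\varphi}(\RR^n))\to H^{\varphi}(X)$ directly via lemma~\ref{L:Triebel} and the estimate~(\ref{E:est-1-Triebel}).
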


\begin{proof}
Let $s_0<s_1$ be real numbers, let $\psi\in\mathcal{B}$ be an interpolation parameter, and let $\varphi$ be as in~(\ref{E:phi-psi}).
Let $(V_j, \gamma_{e_j}, h_j)_{j=1}^{\infty}$ be a geodesic trivialization as in Section~\ref{SS-1-5}.

Keeping in mind the notation $\ell^2(\mathscr{H})$ from~(\ref{E:ell-2-1}), define a (linear) operator $F$ acting on $u\in H^{\varphi}(X)$ as follows:
\begin{equation}\label{E:op-f}
Fu:=((h_ju)\circ \gamma_{e_j})_{j=1}^{\infty}.
\end{equation}
Looking at~(\ref{E:sob-phi-norm-X}) and~(\ref{E:ell-2-1}), it is easily checked that
\begin{equation}\label{E:op-f-phi-fin}
F\colon H^{\varphi}(X)\to \ell^2(H^{\varphi}(\RR^n))
\end{equation}
is a bounded linear operator.

In particular, using~(\ref{E:op-f-phi-fin}) with $\varphi(t)=t^{s_0}$ and $\varphi(t)=t^{s_1}$, we get bounded linear operators
\begin{equation}\label{E:op-f-j}
F\colon H^{(s_0)}(X)\to \ell^2(H^{(s_0)}(\RR^n)),\qquad F\colon  H^{(s_1)}(X)\to \ell^2(H^{(s_1)}(\RR^n)).
\end{equation}
Since $\psi\in\mathcal{B}$ is an interpolation parameter, we also have the following bounded linear operator (see Section~\ref{SS-2-1}):
\begin{equation}\label{E:op-f-psi}
F\colon  [H^{(s_0)}(X), H^{(s_1)}(X)]_{\psi}\to [\ell^2(H^{(s_0)}(\RR^n)),\ell^2(H^{(s_1)}(\RR^n))]_{\psi}.
\end{equation}
By Lemma~\ref{L-6} and Proposition~\ref{L-3} we have
\begin{align}\label{E:ref-int}
&[\ell^2(H^{(s_0)}(\RR^n)),\ell^2(H^{(s_1)}(\RR^n))]_{\psi}=\ell^2([H^{(s_0)}(\RR^n),H^{(s_1)}(\RR^n)]_{\psi})\nonumber\\
&=\ell^2(H^{\varphi}(\RR^n)),
\end{align}
which together with~(\ref{E:op-f-psi}) leads to a bounded linear operator
\begin{equation}\label{E:op-f-psi-final}
F\colon [H^{(s_0)}(X), H^{(s_1)}(X)]_{\psi}\to \ell^2(H^{\varphi}(\RR^n)).
\end{equation}

Recalling the formulas for the actions of $G$ and $F$ in~(\ref{E:op-g}) and~(\ref{E:op-f}) respectively, a short calculation (see Remark 3.12 of~\cite{G-13}) shows that
\begin{equation}\label{E:inversion-gf}
GFu=u.
\end{equation}

As the operators in~(\ref{E:op-f-psi-final}) and~(\ref{E:op-g-bdd}) are bounded, we get a bounded linear operator
\begin{equation}\label{E:gf-phi-est}
GF\colon [H^{(s_0)}(X), H^{(s_1)}(X)]_{\psi} \to H^{\varphi}(X).
\end{equation}

The boundedness of the operator $GF$ in~(\ref{E:gf-phi-est}), together with the property~(\ref{E:inversion-gf}), leads to
\begin{equation*}
\|u\|_{H^{\varphi}(X)}=\|GF u\|_{H^{\varphi}(X)}\leq c_1\|u\|_{[H^{(s_0)}(X), H^{(s_1)}(X)]_{\psi}},
\end{equation*}
where $c_1>0$ is a constant.

By Lemma~\ref{L:G-bounded} with $\varphi(t)=t^{s_0}$ and $\varphi(t)=t^{s_1}$, we get bounded linear operators
\begin{equation}\nonumber
G\colon\ell^2(H^{(s_0)}(\RR^n)) \to H^{(s_0)}(X),\,\qquad G\colon\ell^2(H^{(s_1)}(\RR^n)) \to H^{(s_1)}(X),
\end{equation}
which leads to a bounded linear operator
\begin{equation}\nonumber
G\colon [\ell^2(H^{(s_0)}(\RR^n)),\ell^2(H^{(s_0)}(\RR^n))]_{\psi} \to [H^{(s_0)}(X), H^{(s_1)}(X)]_{\psi}.
\end{equation}
Referring again to~(\ref{E:ref-int}) we get a bounded linear operator
\begin{equation}\label{E:op-g-final}
G\colon\ell^2(H^{\varphi}(\RR^n)) \to [H^{(s_0)}(X), H^{(s_1)}(X)]_{\psi}.
\end{equation}

As the operators in~(\ref{E:op-g-final}) with~(\ref{E:op-f-phi-fin}) are bounded, we obtain a bounded linear operator
\begin{equation}\label{E:gf-phi-est-1}
GF\colon H^{\varphi}(X)\to [H^{(s_0)}(X), H^{(s_1)}(X)]_{\psi}.
\end{equation}
The boundedness of the operator $GF$ in~(\ref{E:gf-phi-est-1}), together with the property~(\ref{E:inversion-gf}), leads to
\begin{equation*}
\|u\|_{[H^{(s_0)}(X), H^{(s_1)}(X)]_{\psi}}=\|GF u\|_{[H^{(s_0)}(X), H^{(s_1)}(X)]_{\psi}}\leq c_2\|u\|_{H^{\varphi}(X)},
\end{equation*}
where $c_2>0$ is a constant.

Therefore, up to norm equivalence, we have the equality~(\ref{interp-r-n}).
\end{proof}

\subsection{Proof of Theorem~\ref{T:main-1}: Implication (i)$\implies$(ii)} As $\mathscr{S}$ is an interpolation space with respect to the pair $[H^{(s_0)}(X), H^{(s_1)}(X)]$, for some $s_0<s_1$, we can use Proposition~\ref{L-2} to infer $\mathscr{S}=\mathscr{H}_{\psi}$, where $\psi\in\mathcal{B}$ is pseudoconcave in a neighborhood of $\infty$. (In other words, in view of Proposition~\ref{L-1}, $\psi$ is an interpolation parameter.) Define $\varphi$ by the formula~(\ref{E:phi-psi}). By Proposition~\ref{L-3-b}, the function $\varphi$ satisfies the condition~(\ref{E:RO-3}), and, hence, $\varphi\in RO$.  To complete the proof, it remains to apply Proposition~\ref{L-5}. $\hfill\square$

\subsection{Proof of Theorem~\ref{T:main-1}: Implication (ii)$\implies$(i)} Let $\varphi\in RO$ be a function satisfying the condition~(\ref{E:RO-3}), and define $\psi$ as in~(\ref{E:phi-psi-inverse}), with $s_0<s_1$ as in~(\ref{E:RO-3}). By Proposition~\ref{L-4} it follows that $\psi\in\mathcal{B}$ and $\psi$ is an interpolation parameter. To complete the proof, it remains to use Proposition~\ref{L-5} with $s_0$ and $s_1$ as in~(\ref{E:phi-psi-inverse}). $\hfill\square$

\section{Proof of Corollary~\ref{C:cor-1}}\label{S:S-4}
The implication (i)$\implies$(ii) follows directly from Theorem~\ref{T:main-1}. As for the implication (ii)$\implies$(i), let $\varphi\in RO$ and let $s_0<\sigma_0(\varphi)$ and $s_1>\sigma_1(\varphi)$. Then $\varphi$ satisfies the condition~(\ref{E:RO-3}). Thus, by Theorem~\ref{T:main-1}, we have (up to norm equivalence) that $H^{\varphi}(X)$ is an interpolation space with respect to the pair
$[H^{(s_0)}(X), H^{(s_1)}(X)]$. Hence, up to the norm equivalence, $H^{\varphi}(X)$ is an interpolation space with respect to the scale $\{H^{(s)}(X)\colon s\in\RR\}$. $\hfill\square$

\section{Proof of Theorem~\ref{T:main-2}}\label{S:S-5}
By Proposition~\ref{L-4} we have $\psi\in\mathcal{B}$ and $\psi$ is an interpolation parameter. Furthermore, by the same proposition, $\psi$ satisfies~(\ref{E:phi-psi}). With these observations, Theorem~\ref{T:main-2} follows from Proposition~\ref{L-5}. $\hfill\square$

\section{Proof of Theorem~\ref{T:ind-triv}}\label{S:S-5a}
As in the proof of Theorem 3 in~\cite{MZ-24}, we consider two geodesic trivializations  $\mathscr{T}:=(V_j, \gamma_{e_j}, h_j)_{j=1}^{\infty}$ and $\widetilde{\mathscr{T}}:=(\widetilde{V_j}, \widetilde{\gamma_{e_j}}, \widetilde{h_j})_{j=1}^{\infty}$, with notations as in Section~\ref{SS-1-5}.

By Theorem 3.9 in~\cite{G-13} the identity map $I$ gives isomorphisms between the (usual) Sobolev spaces
\begin{equation}\label{E:iso-1-f}
I\colon H^{(s_0)}(X,\mathscr{T})\to H^{(s_0)}(X,\widetilde{\mathscr{T}}),\qquad I\colon H^{(s_1)}(X,\mathscr{T})\to H^{(s_1)}(X,\widetilde{\mathscr{T}}),
\end{equation}
where $(X,\mathscr{T})$ and $(X,\widetilde{\mathscr{T}})$ indicate that we used the geodesic trivializiations $\mathscr{T}$ and $\widetilde{\mathscr{T}}$ respectively.

Assume that $\varphi\in RO$ and choose numbers $s_0,\,s_1$ such that $s_0<\sigma_0(\varphi)$ and $s_1>\sigma_1(\varphi)$. Defining $\psi$ as in~(\ref{E:phi-psi-inverse}), we have by Theorem~\ref{T:main-2} that $\psi\in\mathcal{B}$ is an interpolation parameter and, up to norm equivalence,
\begin{equation}\label{E:iso-2-f}
[H^{(s_0)}(X,\mathscr{T}), H^{(s_1)}(X,\mathscr{T})]_{\psi}=H^{\varphi}(X,\mathscr{T}),
\end{equation}
\begin{equation}\label{E:iso-3-f}
[H^{(s_0)}(X,\widetilde{\mathscr{T}}), H^{(s_1)}(X,\widetilde{\mathscr{T}})]_{\psi}=H^{\varphi}(X,\widetilde{\mathscr{T}}),
\end{equation}

Putting together~(\ref{E:iso-1-f}),~(\ref{E:iso-2-f}),~(\ref{E:iso-3-f}) and using part (ii) of the definition of the interpolation space (see Section~\ref{SS-1-11}), it follows that the spaces $H^{\varphi}(X,\mathscr{T})$ and  $H^{\varphi}(X,\widetilde{\mathscr{T}})$ coincide up to norm equivalence. $\hfill\square$

\section{Proof of Theorem~\ref{T:main-3}}\label{S:S-6} Having equipped ourselves with Theorem~\ref{T:main-2}, we can just follow the proof of Theorem 5.2 in~\cite{MM-15} for the case $X=\Omega$, where $\Omega$ is a bounded domain with Lipschitz boundary.

We begin with an abstract proposition (see Theorem 1.3 in~\cite{MM-14}):

\begin{prop}\label{P-MAIN-3-1} Assume that $\lambda,\eta,\psi\in\mathcal{B}$ and that $\frac{\lambda}{\eta}$ is bounded in a neighborhood
of $\infty$. Let $\mathscr{H}$ be an admissible pair of Hilbert spaces. Then,

\begin{enumerate}
  \item [(i)] $[\mathscr{H}_{\lambda}, \mathscr{H}_{\eta}]$ is an admissible pair;
  \item [(ii)] we have $[\mathscr{H}_{\lambda}, \mathscr{H}_{\eta}]_{\psi}= \mathscr{H}_{\omega}$, with norm equality, where

\begin{equation}\label{E:eqn-def-greek}
\omega(t) := \lambda(t) \psi\left(\frac{\eta(t)}{\lambda(t)}\right),\qquad t>0.
\end{equation}
\item [(iii)] If $\lambda,\eta,\psi\in\mathcal{B}$ are interpolation parameters, then so is $\omega$.
\end{enumerate}
\end{prop}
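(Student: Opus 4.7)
The plan is to work entirely through the functional calculus of the generating operator of the pair $\mathscr{H}=[\mathscr{H}_0, \mathscr{H}_1]$. Let $J$ denote this generating operator, so $J$ is positive self-adjoint in $\mathscr{H}_0$ with $\textrm{Spec}(J)\subset[1,\infty)$, $\dom(J)=\mathscr{H}_1$, and $\|u\|_{\mathscr{H}_1}=\|Ju\|_{\mathscr{H}_0}$; for $\mu\in\mathcal{B}$ the space $\mathscr{H}_\mu$ coincides with $\dom(\mu(J))$ and carries the norm $\|\mu(J)\cdot\|_{\mathscr{H}_0}$. My first move is to exhibit the generating operator of the new pair $[\mathscr{H}_\lambda, \mathscr{H}_\eta]$ as $\tilde J := (\eta/\lambda)(J)$, defined via functional calculus (meaningful since $\eta/\lambda$ is Borel, positive, and bounded below on bounded subsets of $\textrm{Spec}(J)$). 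By the uniqueness of the generating operator recalled in section~\ref{SS-2-1}, it then suffices to verify that $\tilde J$ is positive self-adjoint in $\mathscr{H}_\lambda$, that $\dom(\tilde J)=\mathscr{H}_\eta$, and that $\|\tilde J u\|_{\mathscr{H}_\lambda}=\|u\|_{\mathscr{H}_\eta}$, all of which are immediate spectral-theoretic computations.

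For (i), the hypothesis that $\lambda/\eta$ is bounded near $\infty$, combined with $\lambda, \eta \in \mathcal{B}$ (which forces local boundedness on $(0,\infty)$), gives $\lambda/\eta$ bounded on all of $\textrm{Spec}(J)\subset[1,\infty)$; spectral calculus then yields at once the continuous embedding $\mathscr{H}_\eta \hookrightarrow \mathscr{H}_\lambda$. Density is obtained by approximating $u \in \mathscr{H}_\lambda$ by the spectral cutoffs $u_n := E_J([1,n])u$, which lie in every $\mathscr{H}_\mu$ (because $\mu$ is bounded on $[1,n]$) and satisfy $\lambda(J)u_n \to \lambda(J)u$ in $\mathscr{H}_0$ by dominated convergence against the spectral measure. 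For (ii), the composition rule of the functional calculus gives
\begin{equation*}
\psi(\tilde J) = \psi\bigl((\eta/\lambda)(J)\bigr) = (\psi \circ (\eta/\lambda))(J),
\end{equation*}
from which
\begin{equation*}
\|u\|_{[\mathscr{H}_\lambda,\mathscr{H}_\eta]_\psi} = \bigl\|\lambda(J)\,(\psi\circ(\eta/\lambda))(J)\, u\bigr\|_{\mathscr{H}_0} = \|\omega(J)u\|_{\mathscr{H}_0} = \|u\|_{\mathscr{H}_\omega},
\end{equation*}
with matching domains, recovering the formula~(\ref{E:eqn-def-greek}) with equality of norms.

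For (iii), proposition~\ref{L-1} reduces the problem to showing that $\omega(t) = \lambda(t)\psi(\eta(t)/\lambda(t))$ is pseudoconcave near $\infty$ whenever each of $\lambda, \eta, \psi$ is. This is the only step where spectral theory offers no help; it is purely a statement about the function class $\mathcal{B}$. I would argue in three moves: first, extract concave representatives $\lambda_1, \eta_1, \psi_1$ of $\lambda, \eta, \psi$ on some $(c,\infty)$ and verify that $\eta_1/\lambda_1$ is itself equivalent to a concave function near $\infty$; second, use that a concave function (here $\psi_1$) precomposed with a pseudoconcave function is again pseudoconcave, provided the inner function has controlled monotonic behavior; third, multiply by $\lambda_1$ and verify that the product of two pseudoconcave functions remains pseudoconcave. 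The main obstacle, in my view, is the control of monotonicity of the concave representatives needed for the composition step—one must pass to the least concave majorant and show it is eventually non-decreasing (or otherwise well-behaved) so that composition preserves equivalence with a concave function. Once this monotonicity bookkeeping is in place, each individual inequality is a routine consequence of concavity.
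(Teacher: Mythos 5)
Your treatment of parts (i) and (ii) is correct and follows the standard argument (the paper itself does not prove this proposition but imports it from theorem 1.3 of~\cite{MM-14}, whose proof runs along exactly these lines): one identifies $(\eta/\lambda)(J)$ as the generating operator of the pair $[\mathscr{H}_{\lambda},\mathscr{H}_{\eta}]$ and then invokes the composition rule of the Borel functional calculus. The points you leave implicit --- that $\lambda/\eta$ is bounded on all of $\mathrm{Spec}(J)$ (using that $\eta$ is bounded away from zero there, since $1/\eta$ is bounded on $(c,\infty)$ and $\mathrm{Spec}(J)$ is bounded away from $0$), that $\omega\in\mathcal{B}$, and that $\dom(\omega(J))\subseteq\dom(\lambda(J))$ because $\psi\circ(\eta/\lambda)$ is bounded below on $\mathrm{Spec}(J)$ --- are all routine.

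Part (iii) is where the proposal breaks down. The route through proposition~\ref{L-1} is not only unnecessary but, as sketched, rests on a false lemma: the product of two functions pseudoconcave near $\infty$ need not be pseudoconcave (take both factors equal to $t$; the product $t^{2}$ is not equivalent to any concave function near $\infty$, since a positive concave function on $(c,\infty)$ grows at most linearly). So your third step cannot be carried out as a general statement about pseudoconcave functions; it would have to exploit the specific structure of $\omega=\lambda\cdot\bigl(\psi\circ(\eta/\lambda)\bigr)$, and the sketch gives no indication of how. The composition step is similarly unsupported: $\eta_{1}/\lambda_{1}$ is a ratio of concave functions and there is no reason for it to be pseudoconcave or monotone. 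The correct argument for (iii) is a two-line reiteration using only the definition of an interpolation parameter together with part (ii), which you have already established: if $T$ is bounded $\mathscr{H}_{0}\to\mathscr{K}_{0}$ and $\mathscr{H}_{1}\to\mathscr{K}_{1}$, then, since $\lambda$ and $\eta$ are interpolation parameters, $T$ is bounded $\mathscr{H}_{\lambda}\to\mathscr{K}_{\lambda}$ and $\mathscr{H}_{\eta}\to\mathscr{K}_{\eta}$; since $\psi$ is an interpolation parameter applied to the admissible pairs $[\mathscr{H}_{\lambda},\mathscr{H}_{\eta}]$ and $[\mathscr{K}_{\lambda},\mathscr{K}_{\eta}]$, $T$ is bounded $[\mathscr{H}_{\lambda},\mathscr{H}_{\eta}]_{\psi}\to[\mathscr{K}_{\lambda},\mathscr{K}_{\eta}]_{\psi}$; and by part (ii) these spaces are $\mathscr{H}_{\omega}$ and $\mathscr{K}_{\omega}$. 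Hence $\omega$ is an interpolation parameter, and its pseudoconcavity is then a consequence of proposition~\ref{L-1} rather than an input to the proof.
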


To prove Theorem~\ref{T:main-3}, we start with $\varphi_0\in RO$ and choose numbers $s_0,\,s_1$ such that $s_0<\sigma_0(\varphi_j)$ and $s_1>\sigma_1(\varphi_j)$, $j=0,1$. With these $s_0$ and $s_1$, define $\lambda_j$,  $j=0,1$, as follows:
\begin{equation}\label{E:phi-psi-inverse-sigma}
\lambda_j(t):=\left\{\begin{array}{cc}
                  \tau^{-s_0/(s_1-s_0)}\varphi_j(\tau^{1/(s_1-s_0)}), & \tau\geq 1, \\
                  \varphi_j(1), & 0<\tau<1,
                \end{array}\right.
\end{equation}

Observe that $\frac{\lambda_0}{\lambda_1}$ is bounded in a neighborhood of $\infty$.

By Theorem~\ref{T:main-2} we have
\begin{equation}\label{E:temp-3-1}
[[H^{(s_0)}(X), H^{(s_1)}(X)]_{\lambda_0},[H^{(s_0)}(X), H^{(s_1)}(X)]_{\lambda_1}]=[H^{\varphi_0}(X),H^{\varphi_1}(X)].
\end{equation}
As the left hand side features an admissible pair (see part (i) of Proposition~\ref{P-MAIN-3-1}), the pair on the right hand side is also admissible. This proves part (i) of Theorem~\ref{T:main-3}.

Let $\psi$ be as in the hypothesis of Theorem~\ref{T:main-3}. Going back to~(\ref{E:temp-3-1}) and interpolating with a function parameter $\psi$, we get (after using part (ii) of Proposition~\ref{P-MAIN-3-1} with $\mathscr{H}=[H^{(s_0)}(X), H^{(s_1)}(X)]$)

\begin{equation}\label{E:temp-3-2}
[H^{(s_0)}(X), H^{(s_1)}(X)]_{\omega}=[H^{\varphi_0}(X),H^{\varphi_1}(X)]_{\psi},
\end{equation}
where (the interpolation parameter) $\omega$ is given by
\begin{equation}\label{E:phi-psi-inverse-sigma-1}
\omega(t):=\lambda_0(t) \psi\left(\frac{\lambda_1(t)}{\lambda_0(t)}\right),\qquad t\geq 1.
\end{equation}

As in the proof of Theorem 5.2 of~\cite{MM-15} it can be checked that the function $\varphi$ from the hypothesis~(\ref{E:quad-int}) satisfies the condition
\begin{equation*}
\varphi(t)=t^{s_0}\omega(t^{s_1-s_0}),\quad t\geq 1.
\end{equation*}

Therefore, by Proposition~\ref{L-5} we have $\varphi\in RO$  and
\begin{equation}\nonumber.
[H^{(s_0)}(X), H^{(s_1)}(X)]_{\omega}=H^{\varphi}(X),
\end{equation}
and, keeping in mind~(\ref{E:temp-3-2}), we obtain~(\ref{E:t-3-main}). $\hfill\square$

\section{Proof of Theorem~\ref{T:main-4}}\label{S:S-7}
With Theorem~\ref{T:main-2} at our disposal, the proofs of the properties (i)--(iii) proceed in the same manner as in~\cite{MM-13} for the case $X=\RR^n$  (or as in~\cite{MZ-24} for the case of extended Sobolev scale on a closed manifold $X$.) 

For part (i), start with $\varphi\in RO$ and choose numbers $s_0,\,s_1$ such that $s_0<\sigma_0(\varphi)$ and $s_1>\sigma_1(\varphi)$. Defining $\psi$ as in~(\ref{E:phi-psi-inverse}), we have by Theorem~\ref{T:main-2} that $\psi\in\mathcal{B}$ is an interpolation parameter and, up to norm equivalence, we have
\begin{equation*}
[H^{(s_0)}(X), H^{(s_1)}(X)]_{\psi}=H^{\varphi}(X).
\end{equation*}
Appealing to part (i) of the definition of the interpolation space (see Section~\ref{SS-1-11}), we get a continuous embedding
\begin{equation*}
H^{(s_1)}(X)\hookrightarrow H^{\varphi}(X)\hookrightarrow H^{(s_0)}(X)
\end{equation*}
and the density of $\xcomp$ in $H^{\varphi}(X)$ follows from the fact (see Proposition 3.11 in~\cite{Kor-91}) that $\xcomp$ is dense in the spaces $H^{(s)}(X)$, $s\in\RR$.

For part (ii), it suffices to observe that hypotheses of Theorem~\ref{T:main-3} are satisfied; therefore,
$[H^{\varphi_0}(X),H^{\varphi_1}(X)]$ is an admissible pair, and we have a continuous embedding $H^{\varphi_1}(X)\hookrightarrow H^{\varphi_0}(X)$.

For part (iii), we proceed as in the proof of Theorem 7 in~\cite{MZ-24}. First, recall (see Proposition 3.13 in~\cite{Kor-91}) that for all $s\in\RR$ the sesquilinear form~(\ref{E:inner-l-2-x}) extends to a sesquilinear duality (separately continuous sesquilinear form)
\begin{equation}\label{E:inner-product-h-phi-reg}
(\cdot,\cdot)\colon H^{(s)}(X)\times H^{(-s)}(X)\to\mathbb{C}.
\end{equation}
The spaces $H^{(s)}(X)$ and $H^{(-s)}(X)$ are dual relative to the duality~(\ref{E:inner-product-h-phi-reg}): for each $s\in \RR$, the map $f(u):=(u,\cdot)$, with $u\in H^{(s)}(X)$, is an isomorphism $f\colon H^{(s)}(X)\to (H^{(-s)}(X))'$, where $[H^{(-s)}(X)]'$ is the anti-dual space of $H^{(-s)}(X)$.

Let $\varphi\in RO$, let $s_0<\sigma_0(\varphi)$, $s_1>\sigma_1(\varphi)$, and let $\psi$ be as in~(\ref{E:phi-psi-inverse}). Thus, we have an isomorphism
\begin{equation}\label{E:4-1-in}
[H^{(s_0)}(X),H^{(s_1)}(X)]_{\psi}\cong [(H^{(-s_0)}(X))',(H^{(-s_1)}(X))']_{\psi}.
\end{equation}
Furthermore, by abstract Theorem 1.4 in~\cite{MM-14}, we have (with equality of norms)
\begin{equation}\label{E:4-2-in}
[(H^{(-s_0)}(X))',(H^{(-s_1)}(X))']_{\psi}=\left([(H^{(-s_1)}(X)),(H^{(-s_0)}(X))]_{\widetilde{\psi}}\right)',
\end{equation}
where $\widetilde{\psi}:=\frac{t}{\psi(t)}$. Referring again to Theorem 1.4 from~\cite{MM-14}, $\widetilde{\psi}\in\mathcal{B}$ is an interpolation parameter.

As $\psi$ satisfies~(\ref{E:phi-psi}) (as guaranteed by Proposition~\ref{L-4}), it is easily checked that $\widetilde{\varphi}:=\frac{1}{\varphi}$ satisfies
\begin{equation}\nonumber
\widetilde{\varphi}(t)=t^{-s_1}\widetilde{\psi}(t^{-s_0-(-s_1)}).
\end{equation}
Therefore, by Proposition~\ref{L-5}, we have, up to norm equivalence,
\begin{equation}\label{E:4-3-in}
[(H^{(-s_1)}(X)),(H^{(-s_0)}(X))]_{\widetilde{\psi}}=H^{\frac{1}{\varphi}}(X).
\end{equation}
Combining~(\ref{E:4-1-in}),~(\ref{E:4-2-in}),~(\ref{E:4-3-in}) and recalling (see Theorem~\ref{T:main-2}) that
\begin{equation}\nonumber
H^{\varphi}(X)=[H^{(s_0)}(X),H^{(s_1)}(X)]_{\psi},
\end{equation}
we get an isomorphism
\begin{equation}\nonumber
H^{\varphi}(X)\cong  \left(H^{\frac{1}{\varphi}}(X)\right)'.
\end{equation}
Thus, we have mutual duality of the spaces  $H^{\varphi}(X)$ and $H^{\frac{1}{\varphi}}(X)$ with respect to
the sesquilinear form~(\ref{E:inner-product-h-phi}).

Since~(\ref{E:inner-product-h-phi-reg}) is an extension by continuity of the form~(\ref{E:inner-l-2-x}) and since we have continuous embeddings $H^{(s_1)}(X)\hookrightarrow H^{\varphi}(X)$ and $H^{(-s_0)}(X) \hookrightarrow H^{\frac{1}{\varphi}}(X)$,  the form~(\ref{E:inner-product-h-phi}) is an extension by continuity of the form~(\ref{E:inner-l-2-x}). $\hfill\square$

\section{Proof of Theorem~\ref{T:main-5}}\label{S:S-8}
Let $\varphi\in RO$ and let $s_0<\sigma_0(\varphi)$ and $s_1>\sigma_1(\varphi)$. For $A\in \uprl^{m}(X)$, $m\in\RR$, Theorem 3.9 of~\cite{Kor-91} tells us that
\begin{equation}\label{E:bdd-A-kordyukov}
A\colon H^{(s_0)}(X)\to H^{(s_0-m)}(X),\qquad A\colon H^{(s_1)}(X)\to H^{(s_1-m)}(X)
\end{equation}
are bounded linear operators.

Defining $\psi$ as in~(\ref{E:phi-psi-inverse}) and using Proposition~\ref{L-4} we infer that $\psi\in\mathcal{B}$ is an interpolation parameter satisfying~(\ref{E:phi-psi}).  Furthermore, by Proposition~\ref{L-5} we have (up to norm equivalence)
\begin{equation}\label{E:interp-A-1}
[H^{(s_0)}(X), H^{(s_1)}(X)]_{\psi}=H^{\varphi}(X).
\end{equation}

On the other hand, defining $\widetilde{\varphi}:=t^{-m}\varphi$, we see that~(\ref{E:phi-psi}) can be written as
\begin{equation}\nonumber
\varphi(t)=t^{s_0-m}t^{m}\psi(t^{(s_1-m)-(s_0-m)}),
\end{equation}
that is,
\begin{equation}\nonumber
\widetilde{\varphi}(t)=t^{s_0-m}\psi(t^{(s_1-m)-(s_0-m)}),
\end{equation}

Thus, we can use Proposition~\ref{L-5} with $s_0-m$ and $s_1-m$ instead of $s_0$ and $s_1$ respectively and with $\widetilde{\varphi}=t^{-m}\varphi$ instead of $\varphi$. This leads to (up to norm equivalence)
\begin{equation}\label{E:interp-A-2}
[H^{(s_0-m)}(X), H^{(s_1-m)}(X)]_{\psi}=H^{t^{-m}\varphi}(X).
\end{equation}

Taking into account bounded linear operators~(\ref{E:bdd-A-kordyukov}), keeping in mind the properties~(\ref{E:interp-A-1})--(\ref{E:interp-A-2}), and recalling the definition of interpolation space (see Section~\ref{SS-1-11}), we
get a bounded linear operator $A\colon H^{\varphi}(X)\to H^{t^{-m}\varphi}(X)$.

Similarly, if $\mathscr{F}$ is a bounded family in $\uprl^{m}(X)$, then there exist $C_{s_0}$ and $C_{s_1}$ such that
\begin{equation}\label{E:bdd-A-kordyukov-fm}
\|A\|_{H^{(s_0)}(X)\to H^{(s_0-m)}(X)}\leq C_{s_0},\qquad \|A\|_{H^{(s_1)}(X)\to H^{(s_1-m)}(X)}\leq C_{s_1},
\end{equation}
for all $A\in \mathscr{F}$. (Here, the constants are independent of $A\in \mathscr{F}$.)

Keeping in mind the definition of interpolation space (see Section~\ref{SS-1-11}) and Remark~\ref{R-rem-int}, the estimate~(\ref{E:bdd-A-fm}) follows by the same interpolation argument as in the first part of the proof.
$\hfill\square$

\section{Proof of Theorem~\ref{T:main-6}}\label{S:S-9}
We proceed as in Theorem 3.12 of~\cite{Kor-91} for the usual Sobolev scale $H^{(s)}(X)$ on a manifold of bounded geometry $X$. We will prove the result for $k=0$, and outline the proof for $k=1,2,\dots$.

Let $(V_j, \gamma_{e_j}, h_j)_{j=1}^{\infty}$ be a geodesic trivialization as in Section~\ref{SS-1-5}. Let $\eta\in C^{\infty}_{c}(\RR^n)$ be a function such that $\eta(t)=1$ near $0\in \RR^n$ and $\supp\eta\subset \widehat{K}_{\varepsilon}$, where $0<\varepsilon<r_{inj}$ and $\widehat{K}_{\varepsilon}$ is as in Section~\ref{SS-1-2}.

For $x_0\in X$ define a family of functions $\chi_{x_0}\colon X\to \RR$, indexed by $x_0\in X$, as follows: $\chi_{x_0}:=\eta\circ \kappa_{e_{x_0}}$, where $\kappa_{e_{x_0}}$ is as in~(\ref{E:geo-def-kappa}), $e_{x_0}\in O(X)$ and $\pi(e_{x_0})=x_0$.

With these notations we have:

\begin{align}\label{E:Eqn-1}
&\sup_{x\in X}|u(x)|=\sup_{x,x_0\in X}|\chi_{x_0}(x)u(x)|\nonumber\\
&\leq\sup_{x_0\in X}\left(\sup_{x\in X}\sum_{j=1}^{\infty}|\chi_{x_0}(x)h_{j}(x)u(x)|\right)\nonumber\\
&=\sup_{x_0\in X}\left(\sup_{p\in {\RR}^n}\sum_{j=1}^{\infty}|[(\chi_{x_0}h_{j}u)\circ \gamma_{e_{j}}](p)|\right)\nonumber\\
&\leq \widetilde{C_1}\sup_{x_0\in X}\left(\sum_{j=1}^{\infty}\|[\chi_{x_0}h_{j}u]\circ\gamma_{e_j}\|_{H^{\varphi}(\RR^n)}\right)\nonumber\\
&\leq\widetilde{C_2}\sup_{x_0\in X}\|\chi_{x_0}u\|_{H^{\varphi}(X)},
\end{align}
where $\widetilde{C_1}$ and $\widetilde{C_2}$ are constants.

Here, the first inequality follows from the properties of $h_j$ (see Section~\ref{SS-1-4}). The second inequality is obtained by applying  the continuous embedding $H^{\varphi}(\RR^n)\hookrightarrow C_{b}(\RR^n)$ (which holds under the hypothesis~(\ref{E:emb-cb-k}); see part (iv) of Proposition 2 in~\cite{MM-13}). Thanks to the properties of the functions $h_j$ (see Section~\ref{SS-1-4}), the constant $\widetilde{C_1}$ in the second inequality is independent of the index $j$. The third inequality follows from the definition~(\ref{E:sob-phi-norm-X}).

Furthermore, since the multiplication operators by a function $\chi_{x_0}$  form a bounded family $\{\chi_{x_0}\}_{x_0\in X}$ of operators in $\uprl^{0}(X)$ (here, the term ``bounded family" is as in Definition~\ref{D-6-x-bdd}), we can use Theorem~\ref{T:main-5} to get
\begin{equation}\label{E:Eqn-2}
\widetilde{C_2}\sup_{x_0\in X}\|\chi_{x_0}u\|_{H^{\varphi}(X)}\leq \widetilde{C_3}\|u\|_{H^{\varphi}(X)},
\end{equation}
where $\widetilde{C_3}$ is a constant.

Combining~(\ref{E:Eqn-1}) and~(\ref{E:Eqn-2}) completes the proof for the case $k=0$.

The general case can be shown by induction with the help of the following observations: First, looking at the definitions~(\ref{E:FT}) and~(\ref{E:sob-phi-1}),  we note that
\begin{equation}\label{E:induction-1}
\|(h_j\circ\gamma_{e_j})[\partial_{i}(u\circ\gamma_{e_j})]\|_{H^{\widetilde{\varphi}}(\RR^n)}\leq \widetilde{C} \|(h_ju)\circ\gamma_{e_j}\|_{H^{\varphi}(\RR^n)},
\end{equation}
where $\widetilde{\varphi}(t):=t^{-1}\varphi(t)$, $\partial_{i}:=\frac{\partial}{\partial_{x_i}}$, and $\widetilde{C}$ is a constant.


Furthermore, note that if $\varphi(t)$ satisfies the condition
\begin{equation}\nonumber
\int_{1}^{\infty}\frac{t^{2(k+1)+n-1}}{\varphi^2(t)}\,dt<\infty,
\end{equation}
then $\widetilde{\varphi}(t):=t^{-1}\varphi(t)$ satisfies the condition
\begin{equation}\nonumber
\int_{1}^{\infty}\frac{t^{2k+n-1}}{[\widetilde\varphi(t)]^2}\,dt<\infty.
\end{equation}
This concludes the proof. $\hfill\square$

\section{Proof of Theorem~\ref{T:main-7}}\label{S:S-10} We begin by recalling an abstract result (see Theorem 2.5 in~\cite{MM-21}):

\begin{prop}\label{P:6-1}Let $\varphi\in RO$, $s_0<\sigma_0(\varphi)$, and $s_1>\sigma_1(\varphi)$. Let $\psi$ be as in~(\ref{E:phi-psi-inverse}).

Then, $\psi\in\mathcal{B}$ and $\psi$ is an interpolation parameter. Furthermore, we have
\begin{equation*}
[H^{(s_0)}_{A}, H^{(s_1)}_{A}]_{\psi}=H^{\varphi}_{A},
\end{equation*}
with equality of norms, where $H^{(s_j)}_{A}$, $j=0,1$, and $H^{\varphi}_{A}$ are as in Section~\ref{SS-1-21}.
\end{prop}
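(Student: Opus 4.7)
The proof splits naturally into two parts, mirroring the two conclusions. The first conclusion---that $\psi\in\mathcal{B}$ and that $\psi$ is an interpolation parameter---is essentially immediate from proposition~\ref{L-4}. Indeed, by the definition of the Matuszewska indices (see section~\ref{SS-1-8}), the hypothesis $s_0<\sigma_0(\varphi)$ and $s_1>\sigma_1(\varphi)$ guarantees that $\varphi$ satisfies~(\ref{E:RO-3}) with these particular $s_0,\,s_1$ and some constant $c\geq 1$. Proposition~\ref{L-4} then applies verbatim and yields both $\psi\in\mathcal{B}$ and its interpolation-parameter property.

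For the substantive conclusion $[H^{(s_0)}_A,H^{(s_1)}_A]_\psi = H^{\varphi}_A$ with equality of norms, my approach is to reduce the identity to a pointwise spectral computation. Since $A$ is self-adjoint in $\mathscr{H}$ with $A\geq I$, the multiplication form of the spectral theorem provides a measure space $(\Omega,\nu)$, a unitary $U\colon\mathscr{H}\to L^2(\Omega,d\nu)$, and a measurable $m\colon\Omega\to[1,\infty)$ such that $UAU^{-1}$ is the operator of multiplication by $m$. Under functional calculus, $\dom(A^s)$ corresponds to $\{f\in L^2(\Omega,d\nu)\colon m^s f\in L^2(\Omega,d\nu)\}$ with $\|A^s u\|_{\mathscr{H}}=\|m^s Uu\|_{L^2(\Omega,d\nu)}$; passing to completions, $H^{(s)}_A$ identifies isometrically with the weighted space $L^2(\Omega,m^{2s}\,d\nu)$. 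In the same way $H^{\varphi}_A$ identifies isometrically with $L^2(\Omega,\varphi(m)^2\,d\nu)$.

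Under these identifications, the admissible pair $[H^{(s_0)}_A,H^{(s_1)}_A]$ becomes $[L^2(\Omega,m^{2s_0}\,d\nu),\,L^2(\Omega,m^{2s_1}\,d\nu)]$, and its generating operator $J$ (acting in the first space) is multiplication by $m^{s_1-s_0}$, since
\begin{equation*}
\|m^{s_1-s_0}f\|^2_{L^2(m^{2s_0}\,d\nu)}=\int_{\Omega}|f|^2 m^{2s_1}\,d\nu = \|f\|^2_{L^2(m^{2s_1}\,d\nu)}.
\end{equation*}
By spectral calculus $\psi(J)$ is multiplication by $\psi(m^{s_1-s_0})$. Because $m\geq 1$ forces $m^{s_1-s_0}\geq 1$, formula~(\ref{E:phi-psi-inverse}) collapses, pointwise on $\Omega$, to the key identity
\begin{equation*}
\psi(m^{s_1-s_0}) = m^{-s_0}\varphi(m).
\end{equation*}
Substituting this into the interpolation norm gives
\begin{equation*}
\|f\|^2_{[\,\cdot\,,\,\cdot\,]_\psi} = \int_{\Omega} m^{-2s_0}\varphi(m)^2|f|^2\, m^{2s_0}\,d\nu = \int_{\Omega}\varphi(m)^2|f|^2\,d\nu,
\end{equation*}
which is precisely the squared norm in $L^2(\Omega,\varphi(m)^2\,d\nu)$. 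Transporting back via $U^{-1}$ yields the asserted equality of norms, and hence equality of spaces.

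The main technical obstacle I anticipate is the careful handling of the completions. When $s_0$ (or $s_1$) is negative, $H^{(s)}_A$ strictly contains $\mathscr{H}$, and if $\varphi$ is unbounded the inclusion $H^{\varphi}_A\supset\mathscr{H}$ is also strict; so the unitary identification, initially defined only on the cores $\dom(A^s)$ and $\dom(\varphi(A))$, must be extended by continuity. One must also verify that the abstract generating operator of $[H^{(s_0)}_A,H^{(s_1)}_A]$ genuinely coincides with the closure of multiplication by $m^{s_1-s_0}$ on $L^2(\Omega,m^{2s_0}\,d\nu)$ (uniqueness of the generating operator in section~\ref{SS-2-1} makes this a self-adjointness check), after which the functional-calculus manipulation of $\psi(J)$ on the full completed space $[H^{(s_0)}_A,H^{(s_1)}_A]_\psi$ is justified.
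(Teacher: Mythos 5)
Your proof is correct, but it is worth noting that the paper does not prove Proposition~\ref{P:6-1} at all: it is imported verbatim as an abstract result, citing Theorem~2.5 of~\cite{MM-21}. What you supply is a self-contained spectral-theorem argument, and it is essentially the standard one for statements of this type. The first half (that $\varphi$ satisfies~(\ref{E:RO-3}) with the chosen $s_0,s_1$ because $s_0<\sigma_0(\varphi)$, $s_1>\sigma_1(\varphi)$, whence Proposition~\ref{L-4} applies) is exactly right. The multiplication-operator computation is also sound: the isometry $\|m^{s_1-s_0}f\|_{L^2(m^{2s_0}d\nu)}=\|f\|_{L^2(m^{2s_1}d\nu)}$ together with self-adjointness of multiplication by $m^{s_1-s_0}$ on its natural domain and the uniqueness of the generating operator identifies $J$, and since $m\geq 1$ and $s_1>s_0$ force $m^{s_1-s_0}\geq 1$, the substitution $\psi(m^{s_1-s_0})=m^{-s_0}\varphi(m)$ is legitimate and yields the norm identity exactly. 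Two points you flag but should actually carry out: (a) the identification of each completion $H^{(s)}_A$ (resp.\ $H^{\varphi}_A$) with the concrete weighted space $L^2(\Omega,m^{2s}d\nu)$ (resp.\ $L^2(\Omega,\varphi(m)^2 d\nu)$) requires the density of the image of the core in the weighted space, which follows by truncating to $\{m\leq N\}$ and using that $\varphi$ is bounded and separated from zero on $[1,N]$; and (b) besides the equality of norms you should record the equality of the underlying sets, $\dom(\psi(J))=L^{2}(\Omega,\varphi(m)^2 d\nu)$, which needs the inclusion $L^{2}(\varphi(m)^2 d\nu)\subseteq L^{2}(m^{2s_0}d\nu)$; this follows from the leftmost inequality in~(\ref{E:RO-3}) at $t=1$, giving $m^{s_0}\leq c\,\varphi(m)/\varphi(1)$. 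With those two routine verifications written out, your argument is a complete proof of the cited result, which the paper itself leaves to the literature.
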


We now return to the setting of Theorem~\ref{T:main-7}.

\begin{lemma}\label{L:A-k-isomorphic} Assume that $X$ is a manifold of bounded geometry. Assume that $A$ satisfies the hypotheses (H1)--(H2). Then, up to norm equivalence, we have
\begin{equation*}
H_{A}^{(k)}(X)=H^{(k)}(X),\quad \textrm{for all }k\in\mathbb{Z}.
\end{equation*}
\end{lemma}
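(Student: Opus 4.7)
My plan is to establish the norm equivalence $H_A^{(k)}(X) = H^{(k)}(X)$ in three stages: the trivial case $k=0$, the positive integers $k \geq 1$, and then the negative case by duality.

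The case $k = 0$ is immediate, since $H_A^{(0)}(X) = L^2(X) = H^{(0)}(X)$ by construction (recall $\dom(A^0) = L^2$ and the defining inner product reduces to the $L^2$ one).

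For positive integers $k$, the key observation is that the spectral-calculus power $A^k$ agrees with the $k$-fold operator composition $\underbrace{A\circ\cdots\circ A}_{k}$ as closed self-adjoint operators: both coincide on $\xcomp$ (which is invariant under $A$ since its kernel has proper support by definition~\ref{D-5-x}(i)) and $\xcomp$ is a core for $\overline{A}$ by remark~\ref{R:kor}. Since $UE\Psi^{*}(X)$ is closed under composition and preserves ellipticity, it follows that $A^k \in \ueprl^{k}(X)$. I would then prove norm equivalence on the core $\xcomp$ by two bounds. For the upper bound $\|A^k u\| \leq C\|u\|_{H^{(k)}}$, apply theorem~\ref{T:main-5} (equivalently Theorem 3.9 of~\cite{Kor-91}) to $A^k$, viewed as an operator of order $k$ mapping $H^{(k)}(X) \to H^{(0)}(X) = L^2(X)$. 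For the reverse bound $\|u\|_{H^{(k)}} \leq C\|A^k u\|$, I would invoke the PUPDO parametrix construction from~\cite{Kor-91}: ellipticity of $A^k$ yields $B \in \uprl^{-k}(X)$ and $R \in \upi(X)$ with $BA^k = I + R$, so
\begin{equation*}
\|u\|_{H^{(k)}} \leq \|BA^k u\|_{H^{(k)}} + \|Ru\|_{H^{(k)}} \leq C_1\|A^k u\|_{L^2} + C_2\|u\|_{L^2} \leq C_3 \|A^k u\|_{L^2},
\end{equation*}
where the final inequality uses hypothesis (H2), which via spectral calculus gives $\|u\| \leq \|A^k u\|$ for $k \geq 0$. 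Since $\xcomp$ is dense in $H^{(k)}(X)$ by proposition 3.11 of~\cite{Kor-91} and is a core for $\overline{A^k}$ (hence dense in $H_A^{(k)}(X)$), the two completions coincide.

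For $k = -j$ with $j \geq 1$, I would argue by duality. Part (iv) of theorem~\ref{T:main-4}, specialized to $\varphi(t) = t^j$, identifies $H^{(-j)}(X)$ with the anti-dual of $H^{(j)}(X)$ via extension by continuity of the form~(\ref{E:inner-l-2-x}). The abstract Hilbert-scale framework of section~\ref{SS-1-21} (following section 2 of~\cite{MM-21}) identifies $H_A^{(-j)}(X)$ as the anti-dual of $H_A^{(j)}(X)$ relative to the same $L^2$ pairing. Since the positive case already provides $H_A^{(j)}(X) = H^{(j)}(X)$ with equivalent norms, passing to anti-duals yields the desired equality for negative $k$.

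The main obstacle is the step identifying the spectral power $A^k$ with the operator-composition power as a single self-adjoint operator, so that the full strength of the PUPDO calculus (composition, ellipticity, parametrix) can be applied without leaving the abstract spectral-calculus setting. Once this identification is made (via invariance of $\xcomp$ and essential self-adjointness from corollary 3.15 of~\cite{Kor-91}), the mapping/regularity machinery of~\cite{Kor-91} does the rest, and the negative case follows formally by duality.
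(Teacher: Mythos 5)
Your argument is correct in substance, but it follows a different route from the paper's. The paper handles $k\in\NN$ by first observing that the self-adjoint closure of $A$ satisfies $\dom(\overline{A})=H^{(1)}(X)$ (remark~\ref{R:kor}) and, thanks to (H2), is bounded below by $1$, so $A\colon H^{(1)}(X)\to L^2(X)$ is an isomorphism; it then bootstraps via the elliptic regularity statement of theorem 3.6 in~\cite{Kor-91} (if $u\in H^{(1)}(X)$ and $Au\in H^{(s-1)}(X)$ then $u\in H^{(s)}(X)$) to conclude that $A^k\colon H^{(k)}(X)\to L^2(X)$ is an isomorphism, which is exactly the statement $H^{(k)}_A(X)=H^{(k)}(X)$. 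This avoids any appeal to the composition calculus for PUPDOs: the only power of $A$ that appears is the $k$-fold composition of the single self-adjoint operator $\overline{A}$, which coincides with the spectral power by standard functional calculus. Your route instead treats $A^k$ as an elliptic PUPDO of order $k$ and proves the two-sided estimate $\|A^k u\|\asymp\|u\|_{H^{(k)}}$ on $\xcomp$ directly, the upper bound from the mapping property and the lower bound from a parametrix together with $\|u\|\le\|A^ku\|$. This is more quantitative and self-contained, but it costs you the composition and parametrix theorems of~\cite{Kor-91} plus one genuinely delicate functional-analytic step that the paper's bootstrap sidesteps.

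That step is the identification of the spectral power $(\overline{A})^k$ with the closure of $A^k|_{\xcomp}$. Your first justification --- that the two operators agree on $\xcomp$ and that $\xcomp$ is a core for $\overline{A}$ --- is not sufficient: a core for a self-adjoint operator need not be a core for its powers (this is a classical pitfall). What actually closes the gap is the observation you make at the end: $A^k|_{\xcomp}$ is a formally self-adjoint elliptic PUPDO of order $k$, hence essentially self-adjoint with domain of closure $H^{(k)}(X)$ by the order-$m$ version of corollary 3.15 in~\cite{Kor-91}; its closure is then a self-adjoint restriction of the self-adjoint operator $(\overline{A})^k$ and therefore equals it. You should promote that remark from an afterthought to the actual argument, and check that the cited corollary is indeed available for arbitrary positive order (the paper only invokes it for order one). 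The duality treatment of $k<0$ matches the paper's, which is equally terse on this point; your version, pairing theorem~\ref{T:main-4}(iv) (equivalently proposition 3.13 of~\cite{Kor-91}) with the anti-duality $\bigl(H^{(j)}_A\bigr)'=H^{(-j)}_A$ from section 2 of~\cite{MM-21} relative to the common $L^2$ pairing, is fine.
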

\begin{proof}
It is enough to show the result for $k\in\NN_{0}$, as the case  $k=-1,-2,\dots$ follows by duality from the case $k\in\NN$.
The case $k=0$ is obviously true. Thus, it remains to consider the case $k\in\NN$.

First, recalling (H1)--(H2) and looking at Remark~\ref{R:kor}, we see that $A$ is a positive self-adjoint operator in $L^2(X)$ such that $\dom(A)=H^{(1)}(X)$. Therefore $A$ establishes an isomorphism $A\colon H^{(1)}(X)\to L^2(X)$. This, together with the definition of $H^{(1)}_{A}(X)$, tells us that $H^{(1)}_{A}(X)=H^{(1)}(X)$, up to norm equivalence.

Furthermore, we recall the following consequence of ``elliptic regularity" property (see Theorem 3.6 in~\cite{Kor-91}) for our operator $A\in \ueprl^{1}(X)$: if $s>1$ and if $u\in H^{(1)}(X)$ satisfies $Au\in H^{(s-1)}(X)$, then $u\in H^{(s)}(X)$.

Using this observation and the mentioned isomorphism $A\colon H^{(1)}(X)\to L^2(X)$, we see that $A^k$ gives rise to an isomorphism $A^k\colon H^{(k)}(X)\to L^2(X)$. Combining this property with the definition of $H^{(k)}_{A}(X)$, we infer that $H^{(k)}_{A}(X)=H^{(k)}(X)$, up to norm equivalence.
\end{proof}

\noindent\textbf{Continuation of the Proof of Theorem~\ref{T:main-7}}
\\\\
Let $\varphi\in RO$, let $k\in\NN$ be a number such that $-k<\sigma_0(\varphi)$ and $k>\sigma_1(\varphi)$. Define $\psi$ as in~(\ref{E:phi-psi-inverse}) with $s_0=-k$ and $s_1=k$. By Proposition~\ref{P:6-1} we have, up to norm equivalence,
\begin{equation}\label{E:m-7-1}
[H^{(-k)}_{A}, H^{(k)}_{A}(X)]_{\psi}=H^{\varphi}_{A}(X).
\end{equation}

The property~(\ref{E:m-7-1}) in combination with Lemma~\ref{L:A-k-isomorphic} yields, up to norm equivalence,
\begin{equation*}
[H^{(-k)}(X), H^{(k)}(X)]_{\psi}=H^{\varphi}_{A}(X).
\end{equation*}
On the other hand, Theorem~\ref{T:main-2} tells us (up to norm equivalence)
\begin{equation*}
H^{\varphi}(X)=[H^{(-k)}(X), H^{(k)}(X)]_{\psi}.
\end{equation*}
Combining the last two properties, we obtain, up to norm equivalence, $H^{\varphi}_{A}(X)=H^{\varphi}(X)$. $\hfill\square$

\vskip 0.25in

\end{document}